\numberwithin{equation}{section}
\newtheorem{theorem}{Theorem}[section]
\newtheorem{proposition}[theorem]{Proposition}
\theoremstyle{definition}
\newtheorem{definition}[theorem]{Definition}
\theoremstyle{plain}
\newtheorem{lemma}[theorem]{Lemma}
\theoremstyle{remark}
\newtheorem{remark}[theorem]{Remark}
\title[An anisotropic Peierls-Nabarro model]{Existence and uniqueness of solutions to the Peierls-Nabarro model in anisotropic media}
\author{Yuan Gao}
\address{Department of Mathematics, Purdue University, West Lafayette, IN}
\email{gao662@purdue.edu}
\author{James M. Scott}
 \address{Department of Applied Physics and Applied Mathematics,
 	Columbia University,
 	New York, NY 10027}
 \email{jms2555@columbia.edu}
\thanks{YG is supported by NSF grant DMS-2204288. JS is supported in part 
	by NSF DMS-1937254 and DMS-2012562.}
\keywords{nonlocal equations, Peierls-Nabarro Model, Ginzburg-Landau equation, anisotropic elasticity, fractional Laplacian, Dirichlet-to-Neumann map, dimension reduction}
\subjclass[2020]{35A02, 35J50, 35Q74, 35R09, 35J60}
\begin{document}

\maketitle

\begin{abstract}
	We study the existence and uniqueness of solutions to the vector field Peierls-Nabarro model for curved dislocations in a transversely isotropic medium. Under suitable assumptions for the misfit potential on the slip plane, we reduce the 3D Peierls-Nabarro model to a nonlocal scalar Ginzburg-Landau equation. For a particular range of elastic coefficients, the nonlocal scalar equation with explicit nonlocal positive kernel is derived. We prove that any stable steady solution has a one-dimensional profile. As a result, we obtain that solutions to the scalar equation, as well as the original 3D system, are characterized as a one-parameter family of straight dislocations.
	This paper generalizes results found previously for the full isotropic case to an anisotropic setting.
\end{abstract}

\section{Introduction}

The vectorial Peierls-Nabarro (PN) model is a nonlinear model that describes the core structure of the dislocation by incorporating the atomistic effect in the dislocation core into the continuum elastic model \cite{peierls1940size, nabarro1947dislocations}. As the most common line defects in materials \cite{HL, Xiang_2006}, the core structures of dislocations are fundamental problems for studying the stability of material structure and the minimum energy barrier for plastic deformations. Particularly for general anisotropic materials, the stationary dislocation core profile and its rigidity is a central problem.
In this paper, we investigate the certain range of the parameters in the anisotropic constitutive relation for which the stationary solution to the vectorial PN model is a one-dimensional profile given by a proper one-dimensional scalar nonlocal Ginzburg-Landau equation with an explicitly computed kernel. 

In the three-dimensional PN model
for the displacement vector $\bu = (u_1,u_2,u_3)$, two half-spaces separated by the slip plane of a dislocation are assumed to be linear elastic continua described by Hooke's law. Here the slip plane is assumed to be a fixed plane $\Gamma$, where the horizontal displacement discontinuity (known as disregistry) happens. 
The two elastic continua are connected by a nonlinear
atomistic potential force across the slip plane, which results a boundary stress nonlinearly depending on the disregistry across the slip plane.  
The direction
and magnitude of the above disregistry due to the dislocation are characterized by the Burgers vector $\bfb$.
The magnitude of the Burgers vector represents the typical length to observe a heavily distorted region in the dislocation core. Hence it is natural to rescale all the quantities including spatial variables $x_1,x_2,x_3$, the displacement vector $\bu = (u_1,u_2,u_3)$ and $\bfb$ with respect to the magnitude of the Burgers vector. After rescaling, we regard all these quantities (with the same notations) as dimensionless and regard the magnitude of the Burgers vector as $4$. This means, after some symmetric assumption on the upper/lower elastic bulk, the disregistry at far field in the shear direction yields the bi-states far field condition $\pm 1$; see \cite{Xiang_2006, gao2021existence, gao2021revisit, dong2021existence} 

In this work the elastic media are transversely isotropic; the stress-strain response of the material is isotropic about an axis normal to the transverse, or basal, plane. The stiffness tensor for transversely isotropic media is uniquely determined by five independent elastic coefficients, see \eqref{eq:StressTensor} below.
We compute the Dirichlet-to-Neumann map associated to the bulk constitutive law in the two half-spaces separated by the slip plane in order to reduce the 3D model to a strongly-coupled 2D nonlocal system. Because of the anisotropy of the media, different slip planes $\Gamma$ result in different forms of reduced models.
In this work we treat two different orientations of the slip plane.
The first case is when $\Gamma$ is oriented perpendicular to the plane of isotropy, and the second case is when $\Gamma$ and the plane of isotropy are parallel; see the illustrations in \Cref{fig:perp} and \Cref{fig:para}. 

\begin{figure}[h]
	\centering
	\begin{minipage}{.45\textwidth}
		\centering
		\includegraphics[width=.8\textwidth]{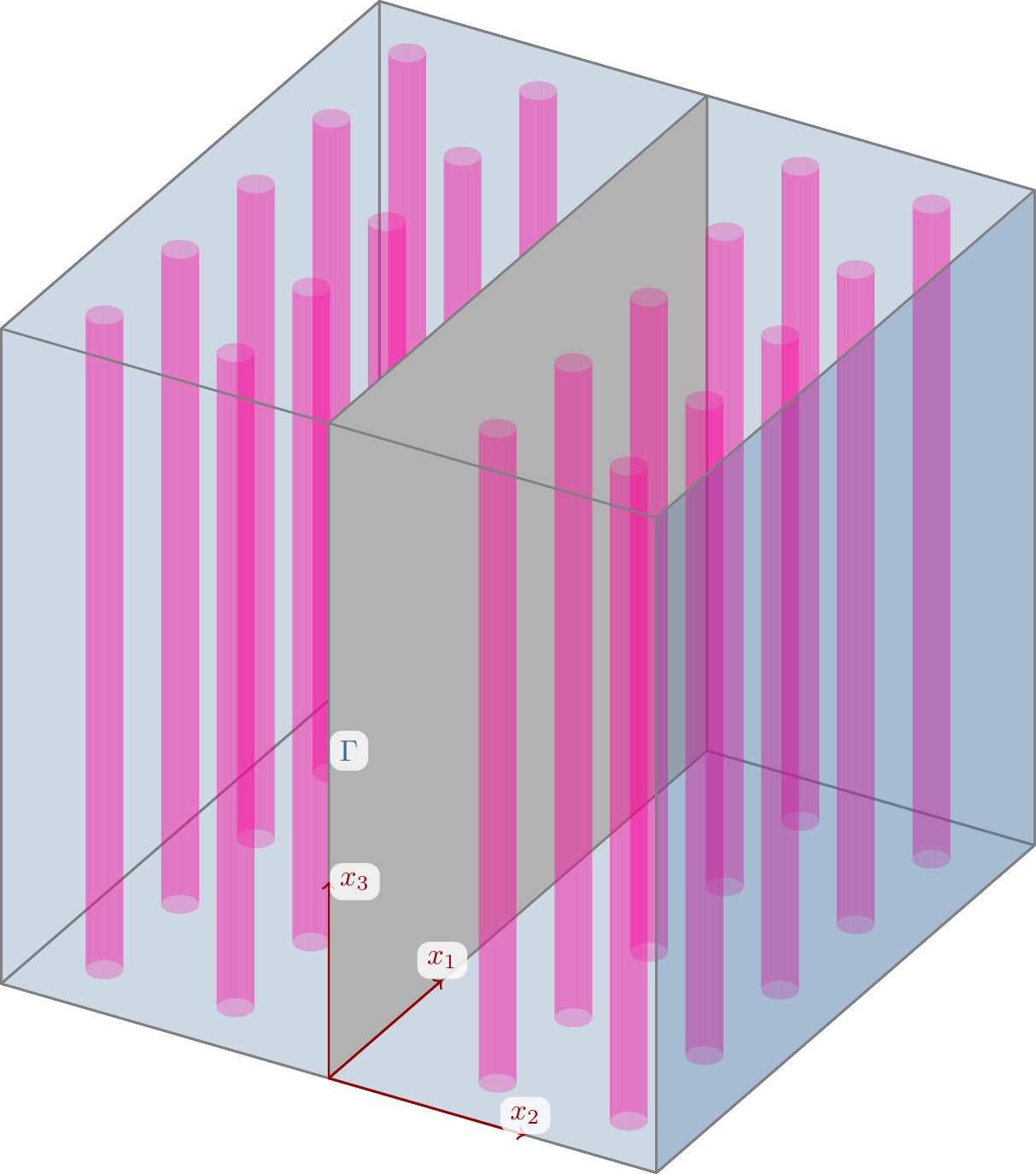}
		\caption{The slipe plane oriented perpendicular to the plane of isotropy.}
		\label{fig:perp}
	\end{minipage}
	\begin{minipage}{.45\textwidth}
		\centering
		\includegraphics[width=.8\textwidth]{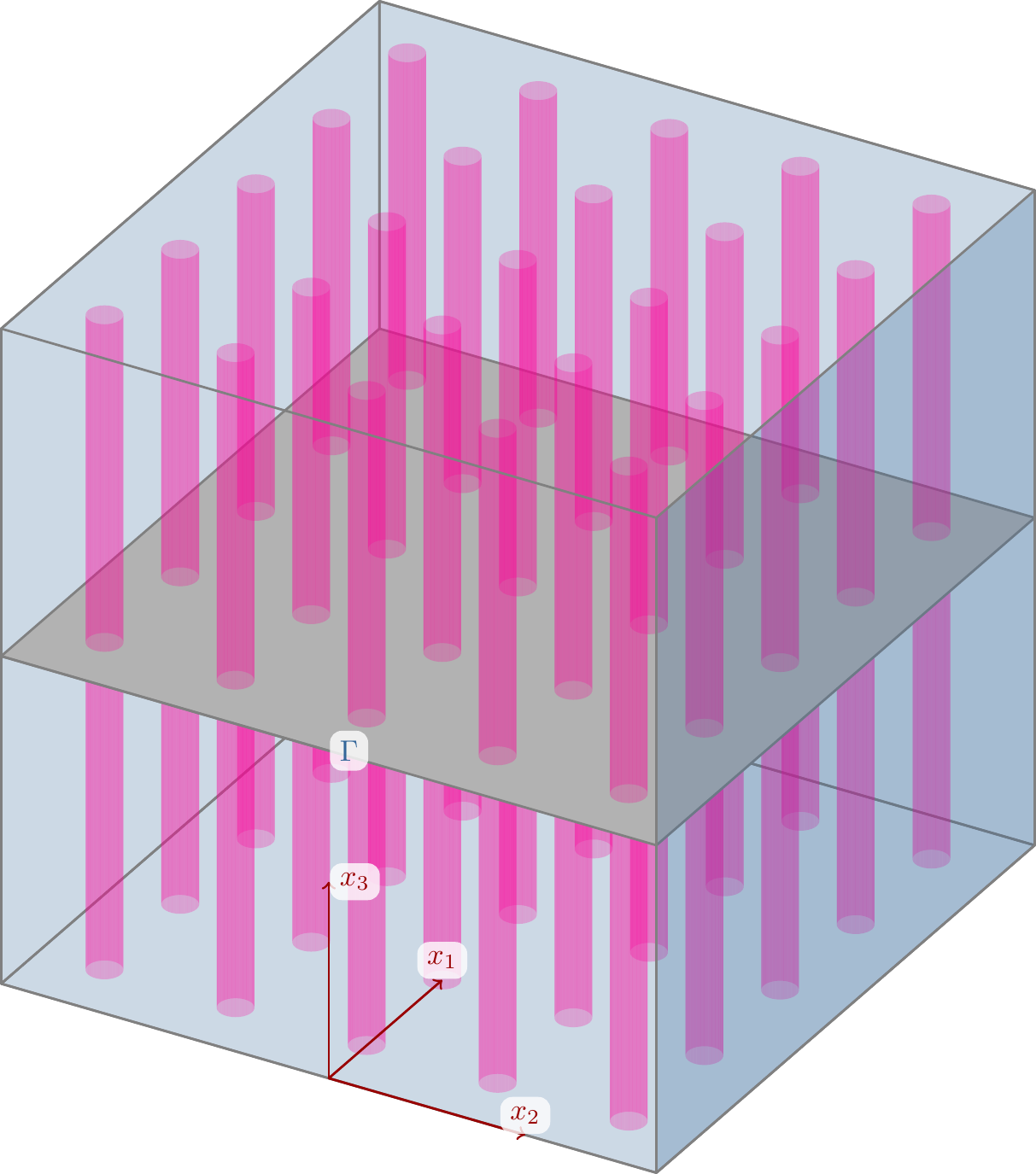}
		\caption{The slipe plane oriented parallel to the plane of isotropy.}
		\label{fig:para}
	\end{minipage}
\end{figure}

The nonlinear misfit potential $W$, defined on the slip plane, creates the boundary stress due to the dislocation disregistry across the slip plane.
In each case, we assume that $W$ depends only on the displacement disregistry in a single direction, and that $W$ is a periodic / double well potential in that one variable.
This assumption permits further reduction of the 2D system to a nonlocal scalar Ginzburg-Landau equation. We compute the exact expression for this integro-differential operator, and find the range of elastic coefficients for which the kernel of the integro-differential operator is positive.
We then apply the procedures that was carried out in \cite{dong2021existence} for the case of fully isotropic elasticity to conclude that the scalar Ginzburg-Landau equation has a unique (up to translation and rotation) solution with a 1D profile. We then follow the elastic extension procedures in \cite{gao2021revisit, gao2021existence} to conclude the well-posedness of the full 3D PN system.

	Again because of the anisotropy of the full 3D media, assuming different directions for the dependence on $W$ leads to different reduced 1D equations.
	In case I, when $\Gamma$ is perpendicular to the plane of isotropy and $W$ depends only on the in-plane-of-isotropy direction of the displacement, the reduced 2D operator is highly-anisotropic, in the sense that its Fourier symbol is characterized by more than three distinct frequency magnitudes.
	In  case II, when $\Gamma$ is perpendicular to the plane of isotropy and $W$ depends only on the out-of-plane-of-isotropy direction of the displacement, the reduced 2D operator is anisotropic but depends on three frequency magnitudes.
	In case III, when $\Gamma$ is parallel to the plane of isotropy, the resulting disregistry depends only on the reference configuration in the plane of isotropy, i.e. the reduced 2D model closely resembles its analogue in the fully isotropic case. This 2D model is itself isotropic, and so choosing $W$ to depend on either direction of the displacement results in equivalent models.
	Our results in these three cases are summarized in three theorems respectively: \Cref{prop:Perp:u1Dependence}, \Cref{prop:Perp:u3Dependence}, and \Cref{prop:parallel}.

	The proof relies on the local BV estimates originally developed in  \cite{CSV19} to study the quantitative flatness of nonlocal minimal surfaces. In \cite[Theorem 4.6]{dong2021existence}, the authors use the flatness estimate for the scalar solution of the nonlocal differential equation by combining the interior BV estimate and a sharp interpolation inequality in \cite{Gui19, FS20, CSV19, savin2018rigidity}. The crucial ingredients are the explicit properties for the kernel representation of the nonlocal operator. To be precise, the nonlocal kernel needs to be homogeneous with respect to dilations, strictly positive, and have good decay estimates;  see \cite{CSV19, dipierro2020improvement, Gui19, gao2021existence}. Taking into account the transverse anisotropy of the materials, for each of the three cases described above, we will take a specific range of the five elastic constants so that we have the desired properties of the kernel ${K}$ that describes the scalar nonlocal operator.
	We use these properties 
	along with \cite[Theorem 4.6]{dong2021existence} to obtain that any bounded stable solution to the reduced scalar nonlocal equation in two dimensions has a 1D profile. 
	For other methods on the rigidity property for nonlocal Allen-Cahn equations, we refer to \cite[Theorem 2]{gao2021existence} for using estimates of the spectrum of a linear operator and refer to \cite{CS05} for using a Liouville-type theorem for the original local equation. With the obtained rigidity result and the explicit kernel representation for the one-dimensional nonlocal Allen-Cahn equation reduced from the 3D vectorial dislocation model, one can further explore the long time behavior of dislocations using maximum principles for the nonlocal operator. We leave this for a future work and refer to the methods in \cite{Mon1, PV1, PV2, gao2022asymptotic} in the case of isotropy.

The PN model for transversely anisotropic media is a generalization of the PN model for full isotropic media. Indeed, each result in this paper has a corresponding result for the PN model in the full isotropic setting by making a particular choice of elastic coefficients; we illustrate this in remarks throughout the paper. In particular, many of the results characterizing the 2D and 1D nonlocal operators studied in \cite{dong2021existence, gao2021revisit, gao2021existence} can be recovered from their analogous statements in this work.
At the same time, in cases I and II we make some special choices of elastic coefficients so that the reduced model can be analyzed relatively easily, 
but the question of whether the results in this work hold for a general choice of coefficients that, for instance, do not satisfy \eqref{eq:Perp:Coeff:Cond2} below, is open.

The paper is organized as follows: in the next section we specify notation and necessary technical lemmas. In \Cref{sec:3DModel:perp} we define the 3D PN model in the case that $\Gamma$ is perpendicular to the plane of isotropy and calculate the reduced 2D model via the Dirichlet-to-Neumann map. \Cref{sec:Perp:u1Eqn} contains the computation and properties of the further reduced 1D nonlocal model in the case that $W$ depends only on $u_1$, and \Cref{sec:Perp:u3Eqn} contains the same in the case that $W$ depends only on $u_3$. In \Cref{sec:parallel} the 3D PN model for the parallel setting is derived, and also contains the dimension reductions, computation and properties of the scalar equation. Finally in \Cref{sec:1DProfile} we conclude that, for each of the three reduced problems, bounded stable solutions have 1D profiles,  and thus complete the proof of \Cref{prop:Perp:u1Dependence}, \Cref{prop:Perp:u3Dependence}, and \Cref{prop:parallel}.

\subsection{Notation and Preliminaries}

For $d \geq 2$, denote the Schwarz class as $\scS(\bbR^d)$ and its dual as $\scS'(\bbR^d)$, and denote the homogeneous Sobolev spaces as
\begin{equation*}
	H^s(\bbR^d) := \left\{ u \in \scS'(\bbR^d) \, : \, \int_{\bbR^d} |\xi|^{2s} |\hat{u}(\xi)|^2 \,\rmd \xi < \infty \right\}\,. 
\end{equation*}
The Fourier transform for $L^1(\bbR^2)$ functions $v$ is written as
\begin{equation*}
	\hat{v}(k) = \cF v(k) = \intdm{\bbR^2}{\rme^{-\imath x \cdot k} v(x) }{x}\,,
\end{equation*}
with inverse denoted by
\begin{equation*}
	v^{\vee}(k) = \cF^{-1} v(k) = (2 \pi)^{-2} \intdm{\bbR^2}{\rme^{\imath x \cdot k} v(x) }{x}\,.
\end{equation*}

\begin{lemma}\label{thm:AnisoLaplaceRep}
	Let $\rho > 0$.  The integral representation of the square root of the operator $(-\Delta)_{\rho} := ( -\p_{x_1}^2 - \rho \p_{x_2}^2 )$ acting on sufficiently smooth $u : \bbR^2 \to \bbR$ is
	\begin{equation*}
		(-\Delta_{\rho})^{1/2} u(x) := -\frac{1}{4 \pi \sqrt{\rho}} \intdm{\bbR^2}{ \frac{u(x-y)+u(x+y)-2u(x)}{ ( y_1^2 + \frac{y_3^2}{\rho} )^{3/2} } }{y}\,,
	\end{equation*}
	and the associated Fourier symbol is $\sqrt{k_1^2 + \rho k_2^2}$.
\end{lemma}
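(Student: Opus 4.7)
The plan is to reduce the anisotropic operator to the standard half-Laplacian on $\bbR^2$ via an affine rescaling of the integration variable, and then read off the Fourier symbol.

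First, I would recall (or quickly re-derive via the standard formula $c_{d,s} = 4^s \Gamma(d/2+s)/(\pi^{d/2} |\Gamma(-s)|)$) that in dimension $d=2$ with $s=1/2$ one has $c_{2,1/2} = 1/(2\pi)$, so that the usual singular-integral representation
\begin{equation*}
	(-\Delta)^{1/2} v(\tilde{x}) = -\frac{1}{4\pi}\intdm{\bbR^2}{\frac{v(\tilde{x}-z)+v(\tilde{x}+z)-2v(\tilde{x})}{|z|^3}}{z}
\end{equation*}
holds for $v \in \scS(\bbR^2)$ and has Fourier symbol $|\eta|$. This is the template I want to match.

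Next I would perform the anisotropic change of variables $z_1 = y_1$, $z_2 = y_2/\sqrt{\rho}$ in the integral defining $(-\Delta_\rho)^{1/2} u(x)$. Under this substitution $y_1^2 + y_2^2/\rho = |z|^2$, the Jacobian is $\rmd y = \sqrt{\rho}\,\rmd z$ which cancels the prefactor $1/\sqrt{\rho}$, and if one introduces the rescaled function $v(w_1,w_2) := u(w_1,\sqrt{\rho}\,w_2)$ together with the rescaled point $\tilde{x} := (x_1, x_2/\sqrt{\rho})$, then $u(x \pm y) = v(\tilde{x} \pm z)$. Therefore
\begin{equation*}
	(-\Delta_\rho)^{1/2} u(x) \;=\; -\frac{1}{4\pi}\intdm{\bbR^2}{\frac{v(\tilde{x}-z)+v(\tilde{x}+z)-2v(\tilde{x})}{|z|^{3}}}{z} \;=\; (-\Delta)^{1/2} v(\tilde{x}).
\end{equation*}
This already shows the integral is well-defined for $u$ smooth enough (say Schwartz), via the standard cancellation of the second-order difference against the $|z|^{-3}$ singularity.

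Finally I would identify the symbol by Fourier transforming both sides in $x$. Using the substitution $w_1 = x_1$, $w_2 = x_2/\sqrt{\rho}$ (so $\rmd x = \sqrt{\rho}\,\rmd w$ and $k\cdot x = k_1 w_1 + \sqrt{\rho}\, k_2 w_2$) one obtains $\cF[(-\Delta_\rho)^{1/2} u](k) = \sqrt{\rho}\,\cF[(-\Delta)^{1/2} v](k_1,\sqrt{\rho}\,k_2) = \sqrt{\rho}\,|(k_1,\sqrt{\rho}\,k_2)|\,\hat{v}(k_1,\sqrt{\rho}\,k_2)$. Combining this with the elementary scaling identity $\hat{v}(\eta_1,\eta_2) = \rho^{-1/2}\hat{u}(\eta_1, \eta_2/\sqrt{\rho})$ for the rescaled function, the stray factors of $\sqrt{\rho}$ cancel and one arrives at
\begin{equation*}
	\cF\bigl[(-\Delta_\rho)^{1/2} u\bigr](k) = \sqrt{k_1^2 + \rho k_2^2}\,\hat{u}(k),
\end{equation*}
as claimed. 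The only real bookkeeping issue is tracking the various $\sqrt{\rho}$ factors coming from the two changes of variables and from the scaling of the Fourier transform; all other steps are formal once $u$ is taken sufficiently smooth and decaying.
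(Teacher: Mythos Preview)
Your proof is correct: the anisotropic rescaling $z=(y_1,y_2/\sqrt{\rho})$ reduces the integral to the standard half-Laplacian on $\bbR^2$ with constant $c_{2,1/2}=1/(2\pi)$, and the Fourier bookkeeping you carry out is accurate. The paper itself states this lemma without proof, so there is no argument to compare against; your approach is the natural one and would serve as the omitted justification.
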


\section{Slip plane perpendicular to the plane of isotropy: the 3D model}\label{sec:3DModel:perp}

For this section we introduce the 3D Peierls-Nabarro model for transversely anisotropic elasticity, and then in \Cref{subsec:perp:2DRed} we reduce it to a 2D nonlocal model via the Dirichlet-to-Neumann map.  The first two main results (\Cref{prop:Perp:u1Dependence} and \Cref{prop:Perp:u3Dependence}) for the case of the slip plane perpendicular to the plane of isotropy are summarized in this section. The proofs of these two results  will be completed in Section \ref{sec:1DProfile}.

In the PN model, the two half spaces of the elastic medium are separated by the slip plane $\Gamma$.
Let $\bu = (u_1,u_2,u_3)$ be the displacement vector. The total energy of the system is
\begin{equation}\label{eq:3D:TotalEnergy}
	E(\bu) := E_{\mathrm{els}}(\bu) + E_{\mathrm{mis}}(\bu)\,.
\end{equation}
The material properties of transversely isotropic materials are determined by five independent elastic constants $C_{11}$, $C_{13}$, $C_{33}$, $C_{44}$, and $C_{66}$. These constants determine the stress-strain relation comprising the total elastic energy  $E_{\mathrm{els}}(\bu)$ for the two half-spaces. To be precise, $E_{\mathrm{els}}(\bu)$ is defined as
\begin{equation}\label{eq:3DEnergy}
	E_{\mathrm{els}}(\bu) = \frac{1}{2} \int_{\bbR^3 \setminus \Gamma} \sigma  :  \veps \, \rmd x = \frac{1}{2} \int_{\bbR^3 \setminus \Gamma} \sigma_{ij}  \veps_{ij} \, \rmd x\,,
\end{equation}
where $\veps_{ij}$ is the strain tensor
\begin{equation*}
	\veps_{ij} = \frac{1}{2}(\p_j u_i + \p_i u_j)\,, \quad \text{ for } i,j, = 1,2,3, \quad \p_i := \frac{\p}{\p x_i}\,,
\end{equation*}
and $\sigma$ is the stress tensor in $\bbR^{3 \times 3}$ given by
\begin{equation}\label{eq:StressTensor}
	\begin{split}
		\sigma_{11} &= C_{11} \veps_{11} + (C_{11} - 2 C_{66}) \veps_{22} + C_{13} \veps_{33}\,,\\
		\sigma_{22} &= (C_{11}-2 C_{66}) \veps_{11} + C_{11} \veps_{22} + C_{13} \veps_{33}\,,\\
		\sigma_{33} &= C_{13} \veps_{11} + C_{13} \veps_{22} + C_{33} \veps_{33} \,,\\
		\sigma_{23} &= \sigma_{32} = C_{44} \veps_{23} \,,\\
		\sigma_{13} &= \sigma_{31} = C_{44} \veps_{13} \,,\\
		\sigma_{12} &= \sigma_{21} = C_{66} \veps_{12} \,.
	\end{split}
\end{equation}
We are using Einstein summation notation in \eqref{eq:3DEnergy}:
\begin{equation*}
	\sigma_{ij} \veps_{ij} = \sum_{i,j=1}^3 \sigma_{ij} \veps_{ij}\,.
\end{equation*}
The five elastic moduli defining $\sigma$ are assumed to satisfy the uniform ellipticity conditions (see \cite{payton2012elastic, merodio2003note})
\begin{equation}\label{eq:UniformEllipticityConditions}
	0< C_{66} < C_{11}\,, \qquad C_{13}^2 < C_{33} (C_{11}-C_{66})\,, \qquad C_{44} > 0\,.
\end{equation}
Note that one can recover the PN model for fully isotropic media by setting
\begin{equation}\label{eq:ConsistentWithIsotropy}
	C_{11} = C_{33} = 2 \mu \frac{1-\nu}{1-2 \nu}\,, \quad C_{13} = 2 \mu \frac{\nu}{1-2 \nu}\,, \quad C_{44} = C_{66} = \mu\,,
\end{equation}
where $\mu$ is the shear modulus and $\nu$ is the Poisson ratio.

In this first setting, $\Gamma$ is taken perpendicular to the plane of isotropy, that is, $\Gamma = \{ (x_1,x_2,x_3) \, : \, x_2 = 0 \}$; see Fig. \ref{fig:perp}.
The misfit energy $E_{\mathrm{mis}}(\bu)$ across the slip plane due to atomistic reactions is therefore given by
\begin{equation*}
	E_{\mathrm{mis}}(\bu) := \int_{\Gamma} \gamma(u_1^+ - u_1^-,u_3^+-u_3^-) \rmd S =  \int_{\Gamma} W(u_1^+,u_3^+) \rmd S\,,
\end{equation*}
where $u_i^{\pm}(x_1,x_3) := u_i(x_1,0^{\pm},x_3)$ for $i = 1$, $3$. We assume that the nonlinear potential $W$ is twice differentiable with H\"older-continuous derivatives, i.e. $W \in C^{2,a}_{b}(\bbR^2;\bbR)$ for some $a \in (0,1)$.

The equilibrium structure of a dislocation is obtained by minimizing \eqref{eq:3DEnergy} subject to the following conditions at the slip plane
\begin{equation}\label{eq:BCs}
	\begin{split}
	u_1^+(x_1,x_3) &= - u_1^-(x_1,x_3)\,, \\
	u_2^+(x_1,x_3) &= u_2^-(x_1,x_3)\,, \\
	u_3^+(x_1,x_3) &= - u_3^-(x_1,x_3)\,. \\
	\end{split}
\end{equation}

We will also consider two different scenarios for the form of $W$; we will treat the cases $\p_{u_1} W = 0$ and $\p_{u_3}W=0$ separately in \Cref{sec:Perp:u1Eqn} and \Cref{sec:Perp:u3Eqn}. 
In the former case we assume the bi-states far field boundary condition for $u_1$
\begin{equation}\label{eq:FarFieldCond:u1}
	u_1^+(\pm \infty,x_3) = \pm 1 \quad \text{ for any } x_3 \in \bbR\,,
\end{equation}
and in the latter case we assume a similar condition for $u_3$
\begin{equation}\label{eq:FarFieldCond:u3}
	u_3^+(x_1,\pm \infty) = \pm 1 \quad \text{ for any } x_1 \in \bbR\,.
\end{equation}

Formally, the elastic energy is infinite due to the slow decay rate of $\veps$. We therefore define a minimizer via perturbation. To be precise, for any $\bsvarphi \in C^{\infty}(\bbR^3 \setminus \Gamma ; \bbR^3)$ with compact support in a Euclidean ball $B_R(0) \subset \bbR^3$ we define the perturbed elastic energy
\begin{equation*}
	\begin{split}
	\hat{E}_{\mathrm{els}}(\bsvarphi|\bu) &:= \int_{\bbR^3 \setminus \Gamma} \frac{1}{2} (\sigma_{\bu} + \sigma_{\bsvarphi}) : (\veps_{\bu} + \veps_{\bsvarphi}) - \frac{1}{2} \sigma_{\bu} : \veps_{\bu}\, \rmd x \\
	&= \int_{\bbR^3 \setminus \Gamma} \frac{1}{2} \big( (\sigma_{\bsvarphi})_{ij} (\veps_{\bsvarphi})_{ij} 
	+ (\sigma_{\bu})_{ij} (\veps_{\bsvarphi})_{ij}
	+(\sigma_{\bsvarphi})_{ij} (\veps_{\bu})_{ij} \big) \, \rmd x \\
	&:= E_{\mathrm{els}}(\bsvarphi) + \cC_{\mathrm{els}}(\bu,\bsvarphi)\,,
	\end{split}
\end{equation*}
where the cross term is defined as
\begin{equation*}
	\cC_{\mathrm{els}}(\bu,\bsvarphi) := 
	 \int_{\bbR^3 \setminus \Gamma} \frac{1}{2} \big( 
	\sigma_{\bu} : \veps_{\bsvarphi}
	+ \sigma_{\bsvarphi} :  \veps_{\bu} \big) \, \rmd x
	=\int_{\bbR^3 \setminus \Gamma} \frac{1}{2} \big( 
	(\sigma_{\bu})_{ij} (\veps_{\bsvarphi})_{ij}
	+(\sigma_{\bsvarphi})_{ij} (\veps_{\bu})_{ij} \big) \, \rmd x
\end{equation*}
and $\sigma_{\bu}$, $\veps_{\bu}$ and $\sigma_{\bsvarphi}$, $\veps_{\bsvarphi}$ are the stress and strain tensors corresponding to $\bu$ and $\bsvarphi$, respectively.
We also define the perturbed misfit energy as
\begin{equation*}
	\hat{E}_{\mathrm{mis}}(\bsvarphi|\bu) := \int_{\Gamma} W(u_1^+ + \varphi_1^+, u_3^+ + \varphi_3^+) - W(u_1^+,u_3^+) \, \rmd S.
\end{equation*}
The perturbed total energy is then defined as
\begin{equation*}
	\hat{E}(\bsvarphi| \bu) := \hat{E}_{\mathrm{els}}(\bsvarphi|\bu) + \hat{E}_{\mathrm{mis}}(\bsvarphi|\bu)\,.
\end{equation*}

\begin{remark}
	Since $\bu$ and $\bu + \bsvarphi$ coincide outside of $B_R$, the perturbed elastic energy $\hat{E}(\bsvarphi| \bu)$ is equivalent to the local perturbed elastic energy
	\begin{equation*}
		\hat{E}_{\mathrm{els}}(\bsvarphi|\bu;B_R) := \int_{B_R \setminus \Gamma} \frac{1}{2} (\sigma_{\bu} + \sigma_{\bsvarphi}) : (\veps_{\bu} + \veps_{\bsvarphi}) - \frac{1}{2} \sigma_{\bu} : \veps_{\bu}\, \rmd x := E_{\mathrm{els}}(\bu+\bsvarphi;B_R) - E_{\mathrm{els}}(\bu;B_R)\,,
	\end{equation*}
	and similarly for the misfit energy
	\begin{equation*}
		\hat{E}_{\mathrm{mis}}(\bsvarphi|\bu;B_R) := \int_{B_R \cap \Gamma} W(u_1^+ + \varphi_1^+, u_3^+ + \varphi_3^+) - W(u_1^+,u_3^+) \, \rmd S := E_{\mathrm{mis}}(\bu+\bsvarphi;B_R) - E_{\mathrm{mis}}(\bu;B_R)\,,
	\end{equation*}
	so that $\hat{E}(\bsvarphi|\bu)$ is equivalent to
	\begin{equation*}
		\hat{E}(\bsvarphi|\bu;B_R) := \hat{E}_{\mathrm{els}}(\bsvarphi|\bu;B_R) +\hat{E}_{\mathrm{mis}}(\bsvarphi|\bu;B_R) :=  E(\bu+\bsvarphi;B_R) - E(\bu;B_R)\,.
	\end{equation*}
\end{remark}
Hence, we say that $\bu$ is a local minimizer, following the convention in \cite{gao2021revisit}.

\begin{definition}\label{def:perp:LocalMin}
	We call $\bu$ a \textit{local minimizer of} $E$ if $\bu$ satisfies
	\begin{equation*}
		E(\bu+\bsvarphi; B_R) - E(\bu;B_R) \geq 0
	\end{equation*}
	for any $\bsvarphi \in C^{\infty}(\bbR^3 \setminus \Gamma ; \bbR^3)$ with compact support in a Euclidean ball $B_R(0) \subset \bbR^3$ satisfying the conditions
	\begin{equation}\label{eq:BCs:perturbed}
		\begin{split}
			\varphi_1^+(x_1,x_3) &= - \varphi_1^-(x_1,x_3)\,, \\
			\varphi_2^+(x_1,x_3) &= \varphi_2^-(x_1,x_3)\,, \\
			\varphi_3^+(x_1,x_3) &= - \varphi_3^-(x_1,x_3)\,. \\
		\end{split}
	\end{equation}
\end{definition}

In order to define the Euler-Lagrange equation associated to the total energy $E$ we need to set some notation. 
We define $\bbL\bu$ to be the second-order partial differential system
\begin{equation*}
	\bbL \bu := \div \sigma\,, \qquad (\bbL \bu)_i = L_{ij} u_j, \quad i = 1,2,3,
\end{equation*}
where $L_{ij}$ with $i$,$j = 1$, $2$, $3$, is the matrix of partial derivatives
\begin{equation*}
	\begin{split}
	L_{11} &= C_{11} \p_{11} + C_{66} \p_{22} + C_{44} \p_{33}\,, \\
	L_{22} &= C_{66} \p_{11} + C_{11} \p_{22} + C_{44} \p_{33}\,, \\
	L_{33} &= C_{44} \p_{11} + C_{44} \p_{22} + C_{33} \p_{33}\,, \\
	L_{23} &= L_{32} = (C_{13}+C_{44}) \p_{23}\,, \\
	L_{13} &= L_{31} = (C_{13}+C_{44}) \p_{13}\,, \\
	L_{12} &= L_{21} = (C_{11}-C_{66}) \p_{12}\,.
	\end{split}
\end{equation*}

\begin{lemma}
	Assume that $\bu \in C^2(\bbR^3 \setminus \Gamma ; \bbR^3)$ satisfying \eqref{eq:BCs} and either \eqref{eq:FarFieldCond:u1} or \eqref{eq:FarFieldCond:u3} is a local minimizer of the total energy $E$ in the sense of \Cref{def:perp:LocalMin}. Then $\bu$ satisfies the Euler-Lagrange equations
	\begin{equation}\label{eq:ELEqn}
	\begin{gathered}
		\bbL \bu = 0 \text{ in } \bbR^3 \setminus \Gamma\,, \\
		\sigma_{12}^+ + \sigma_{12}^- = \p_{u_1} W(u_1^+,u_3^+) \text{ on } \Gamma\,, \\
		\sigma_{32}^+ + \sigma_{32}^- = \p_{u_3} W(u_1^+,u_3^+) \text{ on } \Gamma\,, \\
		\sigma_{22}^+ - \sigma_{22}^- = 0 \text{ on } \Gamma\,.
	\end{gathered}
	\end{equation}
\end{lemma}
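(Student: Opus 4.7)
The plan is to compute $\frac{d}{dt}\hat{E}(t\bsvarphi|\bu)\big|_{t=0}$ for any admissible perturbation $\bsvarphi$ satisfying \eqref{eq:BCs:perturbed} and then read off the system \eqref{eq:ELEqn} by localizing the resulting identity, first in the bulk and then on $\Gamma$. Since $E_{\mathrm{els}}(t\bsvarphi) = t^2 E_{\mathrm{els}}(\bsvarphi)$ is purely quadratic in $t$, only the cross term $\cC_{\mathrm{els}}(\bu,\bsvarphi)$ contributes to the first variation from the elastic part, and the misfit contribution differentiates to a linear expression in the traces of $\bsvarphi$ on $\Gamma$. Minimality in the sense of \Cref{def:perp:LocalMin} forces this first variation to vanish for every admissible $\bsvarphi$.

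Next I would exploit the major symmetry built into \eqref{eq:StressTensor} to rewrite
$\cC_{\mathrm{els}}(\bu,\bsvarphi) = \int_{\bbR^3 \setminus \Gamma} (\sigma_{\bu})_{ij}\, \p_j \varphi_i \, \rmd x$,
and then integrate by parts separately on $\{x_2>0\}$ and $\{x_2<0\}$. With outer unit normals $\mp e_2$ on $\Gamma$, this produces a bulk term $-\int_{\bbR^3\setminus\Gamma} (\bbL\bu)_i \varphi_i \, \rmd x$ together with a boundary term
$\int_\Gamma \bigl[(\sigma_{\bu})_{i2}^{-}\varphi_i^{-} - (\sigma_{\bu})_{i2}^{+}\varphi_i^{+}\bigr]\, \rmd S$.
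Differentiating the misfit part at $t=0$ yields $\int_\Gamma\bigl[\p_{u_1}W\, \varphi_1^+ + \p_{u_3}W\, \varphi_3^+\bigr]\,\rmd S$. Inserting the perturbation symmetries $\varphi_1^- = -\varphi_1^+$, $\varphi_2^- = \varphi_2^+$, $\varphi_3^- = -\varphi_3^+$ from \eqref{eq:BCs:perturbed}, the boundary integrand collapses to
$-(\sigma_{12}^+ + \sigma_{12}^-)\varphi_1^+ + (\sigma_{22}^- - \sigma_{22}^+)\varphi_2^+ - (\sigma_{32}^+ + \sigma_{32}^-)\varphi_3^+$,
expressed purely in terms of $+$ traces.

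To extract $\bbL\bu = 0$ in $\bbR^3 \setminus \Gamma$ I would localize with $\bsvarphi \in C_c^\infty(\bbR^3 \setminus \Gamma;\bbR^3)$ (which trivially satisfies \eqref{eq:BCs:perturbed}) and apply the fundamental lemma of the calculus of variations. To derive each of the three boundary conditions, for each $i\in\{1,2,3\}$ I would construct an admissible $\bsvarphi$ whose only nonvanishing trace on $\Gamma$ is $\varphi_i^+$: given any smooth compactly supported function on $\Gamma$, extend it into $\{x_2>0\}$ by standard cutoff and reflect antisymmetrically for $i=1,3$ and symmetrically for $i=2$. The arbitrariness of each scalar trace then yields the three equations on $\Gamma$ in \eqref{eq:ELEqn}. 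The main obstacle is simply sign bookkeeping: keeping the outer normals on the two half-spaces straight, packaging the jump/sum combinations consistently with the mixed symmetry/antisymmetry in \eqref{eq:BCs:perturbed}, and verifying that $\varphi_1^+$, $\varphi_2^+$, $\varphi_3^+$ can indeed be prescribed independently while respecting the perturbation constraints.
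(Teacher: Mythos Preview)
Your proposal is correct and is precisely the standard first-variation computation that the paper itself defers to \cite{dong2021existence}; the sign bookkeeping and the use of the perturbation symmetries \eqref{eq:BCs:perturbed} are handled accurately. Nothing further is needed.
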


\begin{proof}
	The computation is similar to its analogue in the case of fully isotropic elasticity; see \cite[Appendix A]{dong2021existence} for the isotropic case.
\end{proof}

The analysis of the reduced models changes significantly depending on the choice of the elastic coefficients. We leave more general cases to future works, but here we make the choices
\begin{equation}\label{eq:Perp:Coeff:Cond2}
	\sqrt{C_{11} C_{33} } - C_{13} - 2 C_{44} = 0\,, \qquad \text{ and } \qquad C_{11} = C_{33}\,.
\end{equation}
The first relation greatly simplifies the analysis, while the second relation further simplifies the presentation of the results for the case of $\Gamma$ perpendicular to the plane of isotropy.

Our first two main results are the following:
\begin{proposition}[Case I: $\Gamma$ perpendicular to the plane of isotropy, $\p_{u_3}W = 0$]\label{prop:Perp:u1Dependence}
	Suppose that the five elastic constants satisfy \eqref{eq:UniformEllipticityConditions} and \eqref{eq:Perp:Coeff:Cond2}, and assume that the quantities
	\begin{equation}\label{eq:Perp:Coeff:NuAndDelta}
		\nu := \frac{C_{13}}{2( C_{13} + C_{44} )}\,, \qquad \delta := \frac{C_{66}}{C_{44}}\,, \qquad \mu := C_{44}
	\end{equation}
	satisfy
	\begin{equation}\label{eq:Perp:u1Dependence:KernelPosRegion}
		\delta > 0 \quad \text{ and } \quad  \max \left\{ 1 - \frac{2}{\delta}\,, \frac{\sqrt{\delta}}{2} \frac{2\delta -3 }{ 2 \delta^{3/2} - 2 \delta^{1/2} + 1 } \right\} < \nu < \frac{2}{4 - \delta + \sqrt{\delta^2 + 8}}\,.
	\end{equation}
	Assume that $\p_{u_3}W = 0$, i.e. $W(u_1,u_3) = W(u_1)$, and that
	\begin{equation}\label{potential}
		\begin{gathered}
		W \text{ is a periodic / double-well potential in } C^{2,a}_b(\bbR) \text{ satisfying } \\
		W(v)>W(\pm 1) \text{ for } x\in(-1,1), \quad W''(\pm 1)>0.
		\end{gathered}
	\end{equation}
	Then  the system \eqref{eq:ELEqn} has a unique (up to translation in the $x_1$-direction and rotation about the $x_2$-axis) 
	classical solution $\bu : \bbR^3 \to \bbR^3$ belonging to $C^{\infty}(\bbR^3 \setminus \Gamma)$ with $\bu(\cdot,x_2,\cdot) \in \dot{H}^s(\bbR^2)$ for all $s \geq 1$ and for all fixed $x_2 \neq 0$. Moreover, the solution is a local minimizer of $E$ in the sense of \Cref{def:perp:LocalMin}.
	The first component $u_1$ is the unique stable solution to \eqref{eq:nonlocalMain1d} with specific nonlocal kernel defined in \eqref{eq:perp:u1Eqn:Kernel} and $u_1$ has a 1D profile.
\end{proposition}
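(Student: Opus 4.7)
The plan is to combine the dimension-reduction strategy already outlined in the introduction with the rigidity results of \cite{CSV19, dong2021existence} and the elastic extension procedure of \cite{gao2021revisit, gao2021existence}. The argument breaks naturally into four stages.

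First, I would compute the Dirichlet-to-Neumann map for the bulk system $\bbL \bu = 0$ in $\bbR^3 \setminus \Gamma$ subject to the jump conditions \eqref{eq:BCs}. Taking the partial Fourier transform in the $x_1$ and $x_3$ variables, the system $\bbL \bu = 0$ becomes a constant-coefficient ODE system in $x_2$ for each frequency $(k_1,k_3)$; under the uniform ellipticity assumption \eqref{eq:UniformEllipticityConditions} the solutions decaying at $\pm\infty$ can be written explicitly and matched with \eqref{eq:BCs}. This expresses the three boundary tractions $(\sigma_{12}^+ + \sigma_{12}^-, \sigma_{22}^+ - \sigma_{22}^-, \sigma_{32}^+ + \sigma_{32}^-)$ on $\Gamma$ as a matrix-valued Fourier multiplier applied to $(u_1^+, u_2^+, u_3^+)$. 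The simplification \eqref{eq:Perp:Coeff:Cond2} is exactly designed to factor the characteristic polynomial of the bulk operator so that this matrix multiplier is computable in closed form.

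Second, since $\p_{u_3}W=0$, the EL boundary conditions $\sigma_{32}^+ + \sigma_{32}^- = 0$ and $\sigma_{22}^+ - \sigma_{22}^- = 0$ are linear in $(u_1^+,u_2^+,u_3^+)$ in Fourier variables. I would solve these two linear relations at each frequency for $\hat u_2^+$ and $\hat u_3^+$ in terms of $\hat u_1^+$, substitute into the remaining equation $\sigma_{12}^+ + \sigma_{12}^- = \p_{u_1}W(u_1^+)$, and obtain a scalar nonlocal equation on $\Gamma \cong \bbR^2$ of the form $\cL u_1^+ = \tfrac{1}{2}W'(u_1^+)$ with explicit Fourier symbol. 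Writing the symbol as the kernel $K$ in \eqref{eq:perp:u1Eqn:Kernel}, it should be positively homogeneous of degree $-3$ in $(k_1,k_3)$, hence dually $K$ is homogeneous of degree $-3$ in $\bbR^2$, which is the right scaling for a fractional operator of order one (via \Cref{thm:AnisoLaplaceRep} as a guide in the isotropic case).

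Third — and this is the main analytic obstacle — I must verify that under the parameter window \eqref{eq:Perp:u1Dependence:KernelPosRegion} the kernel $K$ is strictly positive on $\bbR^2 \setminus \{0\}$. Concretely, one evaluates the inverse Fourier transform of the scalar symbol by writing it in polar coordinates in $(k_1,k_3)$; the radial integral is standard and the angular integral produces a rational-trigonometric function whose positivity is what the two-sided bound on $\nu$ in \eqref{eq:Perp:u1Dependence:KernelPosRegion} is designed to guarantee. The upper bound $\nu < 2/(4-\delta+\sqrt{\delta^2+8})$ controls the denominator so as to keep the symbol coercive (hence $K$ homogeneous and integrable away from $0$), and the two lower bounds ensure the angular profile never changes sign. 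Away from this check, the decay and homogeneity properties required by \cite[Theorem 4.6]{dong2021existence} follow from the homogeneity of the symbol and from $K$ being smooth away from the origin.

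Fourth, once positivity, homogeneity, and the correct decay of $K$ are established, I apply \cite[Theorem 4.6]{dong2021existence} directly to any bounded stable solution of $\cL u_1^+ = \tfrac{1}{2}W'(u_1^+)$ under \eqref{potential} to conclude it has a 1D profile. Then along the one-dimensional slice the equation becomes a scalar fractional Allen-Cahn type equation (order one) with a double-well/periodic nonlinearity; existence, uniqueness up to translation, and the monotone layer shape follow from the 1D analysis carried out in \cite{gao2021existence, dong2021existence, gao2021revisit}, combined with the far-field condition \eqref{eq:FarFieldCond:u1}. Finally, given this boundary datum $u_1^+$ on $\Gamma$, the remaining traces $u_2^+, u_3^+$ are determined by the two linear Fourier relations from step two, and the full $\bu$ is recovered in $\bbR^3 \setminus \Gamma$ by solving the elliptic system $\bbL \bu = 0$ with Dirichlet data $(u_1^+,u_2^+,u_3^+)$ on $\Gamma$; the extension inherits $C^{\infty}$ regularity off $\Gamma$ and the $\dot H^s$ estimates by standard elliptic theory and the elastic extension recipe in \cite{gao2021revisit, gao2021existence}. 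The local-minimizer property follows by the same convexity/perturbation argument as in the isotropic case, since the bulk energy is convex in $\veps$ and the only nonconvexity sits in $W$ which has been handled via the 1D rigidity.
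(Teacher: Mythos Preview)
Your overall architecture matches the paper's exactly: reduce 3D to a $2\times2$ nonlocal system via the Dirichlet-to-Neumann map (your Stage~1 is the content of \Cref{lma:FindingGeneral2DSystem}), eliminate $u_3^+$ to obtain the scalar equation \eqref{eq:nonlocalMain1d} (Stage~2), verify kernel properties (Stage~3), and then apply \cite[Theorem~4.6]{dong2021existence} plus elastic extension (Stage~4, which is the paper's \Cref{sec:1DProfile}).

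There is, however, a real gap in Stage~3. The scalar symbol \eqref{eq:1DFourierSymbolCase1} is \emph{doubly} anisotropic: it involves both $r_1=\sqrt{k_1^2+k_3^2/\delta}$ and $r_2=\sqrt{k_1^2+k_3^2}$, so the proposal of ``polar coordinates, the radial integral is standard, the angular integral produces a rational-trigonometric function'' does not go through --- the symbol does not split as radius times an angular factor in any single polar system, and moreover it grows at infinity so the inverse transform is only distributional. The paper's actual mechanism (\Cref{prop:Aniso:1DOperator:IntegralForm} and \Cref{lma:Aniso:Kernel1Properties}--\Cref{lma:Aniso:Kernel2Properties}) is to rationalize the denominator of the symbol, split the numerator into two pieces each containing a single anisotropic square root, integrate by parts against the half-Laplacian kernel $|y|^{-3}$ (respectively $(y_1^2+\delta y_3^2)^{-3/2}$), and then solve the resulting fourth-order elliptic PDEs for $K_1$ and $K_2$ by a rational ansatz. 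This is where most of the computational work lives and is not bypassed by the polar-coordinate heuristic.

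Relatedly, your reading of the parameter window \eqref{eq:Perp:u1Dependence:KernelPosRegion} is off. The upper bound on $\nu$ is \emph{not} a coercivity condition on the symbol --- coercivity holds throughout the ellipticity region \eqref{eq:Perp:UniformEllipticity:NuAndDelta} since $\nu<1/2$ already keeps $r_1r_2-\nu k_1^2>0$. Both the upper and lower bounds in \eqref{eq:Perp:u1Dependence:KernelPosRegion} are pure kernel-positivity constraints: by homogeneity one restricts $K$ to the unit circle, and the paper (\Cref{prop:temp:u1}) argues that the minima occur at $\theta=0$ and $\theta=\pi/2$, giving exactly $K(1,0)>0$ and $K(0,1)>0$, which are the two inequalities bounding $\nu$ (apart from the ellipticity bound $\nu>1-2/\delta$). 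You should revise Stage~3 to reflect this.
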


\begin{proposition}[Case II: $\Gamma$ perpendicular to the plane of isotropy, $\p_{u_1}W = 0$]\label{prop:Perp:u3Dependence}
	Suppose that the five elastic constants satisfy \eqref{eq:UniformEllipticityConditions} and \eqref{eq:Perp:Coeff:Cond2}. Define the quantities $\mu$, $\nu$ and $\delta$ as in \eqref{eq:Perp:Coeff:NuAndDelta}, and define
	\begin{equation}\label{eq:Defns:pAndQ}
		p := \delta (2(1-\nu)-\delta(1-2\nu)) \quad \text{ and } \quad q := 1 - \nu\,.
	\end{equation}
	Suppose that
	\begin{equation}\label{eq:Perp:u3Dependence:KernelPosRegion}
		(\nu,\delta) \in \cU^+\,,
	\end{equation}
	where $\cU^+ \subset \bbR^2$ is the open region enclosed by the curves $\nu = \frac{1}{2}$,  $\nu = \frac{\delta(\delta-2)}{2(\delta^2-\delta+1)}$, $\nu = 1 - \frac{2}{\delta}$, $\nu = \frac{2}{ 1 + 4 \delta - 2 \delta^2 + \sqrt{1 - 8 \delta + 28 \delta^2 - 16 \delta^3 + 4 \delta^4} }$, and
	$p = \wt{r}$, where $\wt{r}(q)$ is the smallest positive root of the polynomial
	\begin{multline*}
		x \mapsto -8  (-1 + q) x^4 + 8 (-1 + q) q^3 \\
		+  q^2 (13 + 14 q - 11 q^2) x + (-11 + 14 q + 13 q^2) x^3 + 
		2 q (1 - 18 q + q^2) x^2 \,.
	\end{multline*}
	Assume that $\p_{u_1}W = 0$, i.e. $W(u_1,u_3) = W(u_3)$ and that $W$ satisfies \eqref{potential}. Then the system \eqref{eq:ELEqn} has a unique (up to translation in the $x_3$-direction and rotation about the $x_2$-axis)  classical solution $\bu : \bbR^3 \to \bbR^3$ belonging to $C^{\infty}(\bbR^3 \setminus \Gamma)$ with $\bu(\cdot,x_2,\cdot) \in \dot{H}^s(\bbR^2)$ for all $s \geq 1$ and for all fixed $x_2 \neq 0$. Moreover, the solution is a local minimizer of $E$ in the sense of \Cref{def:perp:LocalMin}.  The third component $u_3$ is the unique stable solution to \eqref{eq:nonlocalMain1d:u3Eqn} with specific nonlocal kernel defined in \eqref{eq:AlmostIso:KernelDefn1} and $u_3$ has a 1D profile.
\end{proposition}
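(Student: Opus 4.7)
The plan is to follow the blueprint outlined in the introduction: reduce the 3D Euler--Lagrange system \eqref{eq:ELEqn} first to a 2D nonlocal system on $\Gamma$ via the Dirichlet-to-Neumann map, then further to a scalar 1D nonlocal Ginzburg--Landau equation for $u_3^+$, establish positivity of the resulting convolution kernel under the hypothesis $(\nu,\delta)\in\cU^+$, invoke the BV/flatness rigidity machinery to conclude that any bounded stable solution has a one-dimensional profile, and finally reconstruct the full 3D displacement via elastic extension.

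First I would take the partial Fourier transform of $\bbL\bu = 0$ in the tangential variables $(x_1,x_3)$ on each of the half-spaces $\{x_2>0\}$ and $\{x_2<0\}$. The structural simplification \eqref{eq:Perp:Coeff:Cond2} makes the resulting characteristic polynomial factor in a particularly tractable way, and combined with decay as $|x_2|\to\infty$ this yields an explicit Dirichlet-to-Neumann map sending $(\hat u_1^+,\hat u_2^+,\hat u_3^+)$ to $(\hat\sigma_{12}^+,\hat\sigma_{22}^+,\hat\sigma_{32}^+)$. Imposing the three interface conditions in \eqref{eq:ELEqn} together with the symmetries \eqref{eq:BCs} produces a strongly coupled $3\times 3$ nonlocal system on $\Gamma$. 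Because $\p_{u_1}W = 0$, two of these equations are linear and homogeneous in the traces; I would solve them at the symbol level for $\hat u_1^+$ and $\hat u_2^+$ as multiples of $\hat u_3^+$, and substitute into the remaining equation to obtain a scalar equation $\mathcal{L} u_3 = -\tfrac12 W'(u_3)$ on $\Gamma$ whose Fourier symbol $\widehat K(k)$ is positively homogeneous of degree one in $k$.

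The delicate step, which I expect to be the main obstacle, is to show that the convolution kernel $K$ associated to $\widehat K$ is strictly positive precisely on $\cU^+$. My approach would be to decompose $\widehat K$ as a combination, with coefficients depending rationally on $(\nu,\delta)$, of elementary anisotropic symbols of the form $\sqrt{k_1^2 + \rho_j k_3^2}$ with $\rho_j > 0$; each such symbol inverts via \Cref{thm:AnisoLaplaceRep} to an explicitly positive, homogeneous-of-degree-$(-3)$ kernel in the L\'evy sense. Positivity of $K$ then reduces to the simultaneous nonnegativity of the coefficients in this decomposition and the positivity of the $\rho_j$. The four algebraic curves appearing in the definition of $\cU^+$ together with the polynomial locus $p = \widetilde{r}(q)$ are exactly the loci on which one of these conditions transitions, and $\cU^+$ is the open connected region where all of them are satisfied at once. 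In contrast to Case I and the fully isotropic setting, $\widehat K$ here depends on the elastic parameters through more than one anisotropic frequency scale and through the auxiliary quantities $(p,q)$ in a genuinely nonlinear way; the appearance of the quartic defining $\widetilde{r}(q)$ is a direct consequence, and the root analysis of that quartic together with the accompanying sign inequalities is the technical heart of the argument.

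Once positivity, homogeneity, and the standard algebraic decay bounds on $K$ are in place, the scalar equation fits into the framework of \cite{dong2021existence, CSV19}, and applying \cite[Theorem 4.6]{dong2021existence} I would conclude that any bounded stable solution on $\bbR^2$ has a one-dimensional profile. Existence and uniqueness up to translation in $x_3$ of that 1D layer, under \eqref{potential} and \eqref{eq:FarFieldCond:u3}, then follow from the standard theory of scalar nonlocal Allen--Cahn equations. With $u_3^+$ determined, I would recover the remaining components $u_1$, $u_2$ by solving $\bbL\bu = 0$ in each half-space with the prescribed traces via the elastic extension procedure of \cite{gao2021revisit, gao2021existence}, which simultaneously delivers the regularity $\bu(\cdot,x_2,\cdot) \in \dot{H}^s(\bbR^2)$ for every $s \ge 1$ and every fixed $x_2 \neq 0$. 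The local-minimizer property follows from the convexity of the elastic energy combined with a second-variation computation at the constructed 1D profile, and the stated uniqueness modulo rotation about the $x_2$-axis is inherited from the corresponding discrete symmetry of the reduced scalar problem.
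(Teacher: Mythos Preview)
Your overall architecture matches the paper: Dirichlet-to-Neumann reduction to the $2\times 2$ system \eqref{eq:Perp:2DModel:OperatorMatrix}, elimination of $\hat u_1^+$ to obtain the scalar equation \eqref{eq:nonlocalMain1d:u3Eqn} with symbol \eqref{eq:OperatorSymbol:perp:u3Dep}, positivity of the kernel, the rigidity result of \cite[Theorem 4.6]{dong2021existence}, and elastic extension. The place where your proposal diverges from the paper, and where it would not go through as written, is the positivity step.

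You propose to write the symbol as a finite linear combination $\widehat K(k)=\sum_j c_j(\nu,\delta)\sqrt{k_1^2+\rho_j k_3^2}$ and then read off $K>0$ from $c_j\ge 0$, $\rho_j>0$. But the symbol in Case~II is
\[
m(k)=\frac{2\mu\,|k|\,(p k_1^2+k_3^2)}{p k_1^2+q k_3^2},
\]
which is $|k|$ times a genuine rational function of $k_3^2/k_1^2$ with a nontrivial denominator; it is not a finite sum of anisotropic square roots, and no such decomposition with rational coefficients exists (write $t=k_3^2/k_1^2$ and compare branch and pole structure). Consequently the interpretation of the boundary curves of $\cU^+$ as ``loci where some $c_j$ or $\rho_j$ changes sign'' is not how they arise.

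What the paper does instead is to treat the denominator $p k_1^2+q k_3^2$ as a \emph{local} operator $L_d$. An integration-by-parts argument (Proposition~\ref{prop:AlmostIso:1DOperator:IntegralForm}) shows that the kernel $K$ is the homogeneous-of-degree-$(-3)$ solution of the elliptic PDE $L_d K = 2\mu(p\partial_1^2+\partial_3^2)|z|^{-3}$, which can be solved in closed form; $K$ is a degree-six even polynomial in $(z_1,z_3)$ divided by $|z|^3(q z_1^2+p z_3^2)^3$, see \eqref{eq:AlmostIso:KernelDefn1}--\eqref{eq:AlmostIso:KernelDefn2}. Positivity is then a one-variable calculus problem: one finds the critical points of $\theta\mapsto K(\cos\theta,\sin\theta)$ on $[0,\pi/2]$, namely $0$, $\pi/2$, and $\arccos(\zeta^{\pm})$ for explicit $\zeta^{\pm}(p,q)$, and checks $K>0$ at those points. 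The first two conditions give the curves $\nu=\tfrac{\delta(\delta-2)}{2(\delta^2-\delta+1)}$ and $\nu=\tfrac{2}{1+4\delta-2\delta^2+\sqrt{\cdots}}$; the third, after rationalizing the square root $\sqrt{p^2+q^2}$, gives the quartic in $p$ whose smallest positive root is $\wt r(q)$. That is the actual origin of the polynomial in the statement. Once this explicit positivity and the obvious homogeneity/decay bounds are in place, the remainder of your outline (application of the BV/flatness machinery, 1D existence and uniqueness, elastic extension) coincides with the paper.
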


Note that \eqref{eq:Perp:Coeff:Cond2}-\eqref{eq:Perp:Coeff:NuAndDelta} imply that 
\begin{equation*}
	C_{11} = C_{33} = 2 \mu \frac{1-\nu}{1-2 \nu}\,, \quad C_{13} = 2 \mu \frac{\nu}{1-2 \nu}\,, \quad C_{44} =\mu\,, \quad  C_{66} = \delta \mu\,.
\end{equation*}
The case $\delta=1$ corresponds to the case of isotropic elasticity, and $\mu$ and $\nu$ respectively correspond to the shear modulus and Poisson's ratio of an isotropic solid.

The conditions in  \eqref{eq:UniformEllipticityConditions} for uniform ellipticity  are equivalent to the conditions
\begin{equation}\label{eq:Perp:UniformEllipticity:NuAndDelta}
	0 < \mu\,, \quad 0 < \delta < 4\,, \quad 1 - \frac{2}{\delta} < \nu < \frac{1}{2}\,. 
\end{equation}
Note that $\delta$ can be arbitrarily close to $0$, so there is no lower bound on $\nu$; this is expected behavior of Poisson's ratios in anisotropic media \cite{ting2005poisson}.
It is straightforward to check that the regions in the $(\nu,\delta)$-plane described by \eqref{eq:Perp:u1Dependence:KernelPosRegion} and $\cU^+$ are subsets of the region defined by \eqref{eq:Perp:UniformEllipticity:NuAndDelta}.

\subsection{Reduction of the 3D System to a Nonlocal 2D System}\label{subsec:perp:2DRed}

We can reduce to a nonlocal 2D system that depends only on $u_1^+$ and $u_3^+$ by using the relations in \eqref{eq:ELEqn} satisfied by the Dirichlet-to-Neumann maps $(-\sigma_{12}^+,-\sigma_{22}^+,-\sigma_{32}^+)$ and $(\sigma_{12}^-,\sigma_{22}^-,\sigma_{32}^-)$ for the upper and lower half-spaces respectively. 
We calculate this map using the Fourier transform.
Here, $x = (x_1,x_3)$ and $k = (k_1,k_3)$, so that statements for the slip plane inherit the conventions used for the full 3D system.
We state the dimension reduction in the following lemma.
\begin{lemma}\label{lma:FindingGeneral2DSystem}
	Let $C_{11}$, $C_{13}$, $C_{33}$, $C_{44}$ and $C_{66}$ satisfy \eqref{eq:UniformEllipticityConditions} and \eqref{eq:Perp:Coeff:Cond2}, and define $\mu$, $\nu$ and $\delta$ as in \eqref{eq:Perp:Coeff:NuAndDelta}. 
	Suppose that $\bu$ satisfies \eqref{eq:ELEqn} and remains bounded as $|x_2| \to \infty$, and assume that $u_1^{+}$ and $u_3^{+}$ belong to the fractional Sobolev space $H^s(\bbR^2)$ for some $s \geq 1/2$. Then $\bu$ can be expressed entirely in terms of $u_1^+$ and $u_3^+$. In particular, $(u_1^+, u_3^+)$ satisfies the nonlocal system
	\begin{equation}\label{eq:2DEqn:Formal}
		\begin{bmatrix}
			\cF (\sigma_{12}^+(k)+\sigma_{12}^-(k))  \\
			\cF (\sigma_{32}^+(k)+\sigma_{32}^-(k))  \\
		\end{bmatrix}
		:= - \bbA(k) 
		\begin{bmatrix}
			\hat{u}_1^+(k) \\
			\hat{u}_3^+(k) \\
		\end{bmatrix}
		= \cF 
		\begin{bmatrix}
			\p_{u_1} W (u_1^+,u_3^+) \\
			\p_{u_3} W (u_1^+,u_3^+) \\
		\end{bmatrix} \quad \text{ on } \Gamma\,,
	\end{equation}
	where the $2 \times 2$ matrix $\bbA(k)$
	is given by
	\begin{equation}\label{eq:Perp:2DModel:OperatorMatrix}
		\bbA(k) := \frac{2 \mu}{1-\nu} r_1 \cdot
		\begin{bmatrix}
			\frac{ \delta (2(1-\nu)-\delta(1-2\nu)) k_1^2 + (1-\nu) k_3^2}{r_1^2} & \frac{ \delta \nu  k_1 k_3}{ r_1^2} \\
			\frac{ k_1 k_3 }{\delta r_1^2} \cdot \frac{(\nu \delta +  (1-\nu)(1-\delta) ) r_2 + (\nu \delta^2 - (1-\nu)(1-\delta)^2) r_1 }{r_1+r_2}  & \frac{r_1 r_2 - \nu k_1^2}{r_1^2} \\
		\end{bmatrix}\,,
	\end{equation}
	and   
	\begin{equation}\label{eq:perp:RootDefn}
		r_1 := r_1(k) := \sqrt{k_1^2+k_3^2/\delta} \quad \text{ and } \quad r_2 := r_2(k) := \sqrt{k_1^2+ k_3^2}\,.
	\end{equation}
\end{lemma}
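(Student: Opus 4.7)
My plan is to compute the Dirichlet-to-Neumann map by taking a partial Fourier transform of the Euler-Lagrange system \eqref{eq:ELEqn} in the tangential directions $(x_1,x_3)$, solving the resulting ODE in $x_2$, and then evaluating the induced tractions on the slip plane. Fix $k = (k_1,k_3) \in \bbR^2$ and set $\widehat{\bu}(k,x_2) := \cF_{x_1,x_3}\bu(\cdot,x_2,\cdot)(k)$. Since the coefficients of $\bbL$ are constant, the equation $\bbL \bu = 0$ reduces for each $k$ to a constant-coefficient linear $3 \times 3$ ODE system
\begin{equation*}
M(k,\p_2)\,\widehat{\bu}(k,x_2) = 0, \qquad x_2 \neq 0,
\end{equation*}
where $M(k,\p_2)$ is obtained from the $L_{ij}$ by the substitutions $\p_1 \mapsto ik_1$, $\p_3 \mapsto ik_3$.

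Solutions of the form $e^{-\lambda x_2}\mathbf{v}(k)$ exist precisely when $\det M(k,-\lambda) = 0$; this characteristic polynomial has degree six in $\lambda$. A direct computation under the two special relations in \eqref{eq:Perp:Coeff:Cond2} shows that it factors (up to a positive multiplicative constant) as $(\lambda^2 - r_1^2)^2(\lambda^2 - r_2^2)$, with $r_1,r_2$ as in \eqref{eq:perp:RootDefn}. Hence the bounded solution in the upper half-space takes the form
\begin{equation*}
\widehat{\bu}(k,x_2) = \big( \mathbf{a}(k) + x_2\,\mathbf{b}(k)\big)e^{-r_1 x_2} + \mathbf{c}(k)\,e^{-r_2 x_2}, \qquad x_2 > 0,
\end{equation*}
where $\mathbf{a},\mathbf{b},\mathbf{c} \in \bbC^3$ are constrained to lie in the appropriate (generalized) kernels of $M(k,-r_1)$ and $M(k,-r_2)$. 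An analogous representation holds for $x_2 < 0$ using the exponents $+r_1,+r_2$.

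Next I would impose the conditions at $x_2 = 0$ to determine $\mathbf{a},\mathbf{b},\mathbf{c}$. The Dirichlet data supply $\hat{u}_1(k,0^+) = \hat{u}_1^+(k)$ and $\hat{u}_3(k,0^+) = \hat{u}_3^+(k)$. The antisymmetry of $u_1,u_3$ and the symmetry of $u_2$ in \eqref{eq:BCs} force the lower half-space solution to be a reflected copy of the upper one, so the traction matching $\sigma_{22}^+ = \sigma_{22}^-$ on $\Gamma$ collapses to the single pointwise Fourier-side equation $\widehat{\sigma}_{22}(k,0^+) = 0$. Together with the algebraic constraints coming from $M(k,-r_j)\mathbf{v} = 0$, this forms a linear system whose unique solution expresses $\mathbf{a},\mathbf{b},\mathbf{c}$ — and thus all three components of $\widehat{\bu}(k,\cdot)$ on both half-spaces — as linear functions of $\hat{u}_1^+(k)$ and $\hat{u}_3^+(k)$.

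Finally, I would compute the tangential tractions $\widehat{\sigma}_{12}^\pm(k,0)$ and $\widehat{\sigma}_{32}^\pm(k,0)$ by differentiating $\widehat{\bu}$ and substituting into \eqref{eq:StressTensor}; the antisymmetry makes both the $+$ and $-$ contributions equal to twice the value from above. Collecting the resulting linear expressions into matrix form yields $-\bbA(k)[\hat{u}_1^+,\hat{u}_3^+]^\top$, and combining with the Euler-Lagrange boundary conditions in \eqref{eq:ELEqn} produces \eqref{eq:2DEqn:Formal}. The main obstacle is purely algebraic bookkeeping: one must handle the Jordan-block contribution arising from the double root $r_1$ (the $x_2\,\mathbf{b}(k)e^{-r_1 x_2}$ term), solve a nontrivial $6\times 6$ linear system, and then simplify the resulting rational expressions in $(k_1,k_3,r_1,r_2,\nu,\delta)$ down to the closed form \eqref{eq:Perp:2DModel:OperatorMatrix}. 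The conditions in \eqref{eq:Perp:Coeff:Cond2} are precisely what produces the clean factorization $(\lambda^2 - r_1^2)^2(\lambda^2 - r_2^2)$ and makes this simplification tractable; for a generic choice of elastic constants the three Stroh exponents are distinct and the resulting matrix is substantially messier.
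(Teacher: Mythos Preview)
Your approach is essentially the same as the paper's: take the tangential Fourier transform, reduce $\bbL\bu=0$ to a constant-coefficient ODE in $x_2$, factor the sextic characteristic polynomial using \eqref{eq:Perp:Coeff:Cond2}, keep only the decaying modes, and then use the slip-plane conditions to eliminate all data except $\hat{u}_1^+,\hat{u}_3^+$.

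Two small points of divergence are worth flagging. First, the paper records that the repeated root is $\pm r_2$, not $\pm r_1$, so the Jordan block (the $x_2 e^{-r x_2}$ term) should be attached to $r_2$; you should recheck that factorization before carrying out the algebra. Second, the paper does not invoke the reflection symmetry to reduce $\sigma_{22}^+-\sigma_{22}^-=0$ to $\widehat{\sigma}_{22}(k,0^+)=0$. Instead it keeps $\hat{u}_2^+$ as a third Dirichlet datum, solves the upper and lower half-space problems independently (noting $\bbB^-(k,x_2)=\overline{\bbB^+(k,-x_2)}$), and only afterward uses the $\sigma_{22}$ matching condition to eliminate $\hat{u}_2^+$ in favor of $\hat{u}_1^+,\hat{u}_3^+$. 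Your symmetry shortcut is legitimate---the operator $\bbL$ and the trace conditions \eqref{eq:BCs} are invariant under $(u_1,u_2,u_3)(x_1,x_2,x_3)\mapsto(-u_1,u_2,-u_3)(x_1,-x_2,x_3)$---and it reaches the same endpoint with a bit less bookkeeping, but it does rely on uniqueness of the bounded extension, which you should state explicitly.
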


Note that when $\delta = 1$ the matrix $\bbA$ is consistent with the Fourier transform of the Dirichlet-to-Neumann map in the setting of dislocations in fully isotropic media, see \cite{dong2021existence}.

\begin{proof}
Dividing the system by $C_{44}$, taking the Fourier transform of $\bbL \bu = 0$ in the $(x_1,x_3)$ variables, and using \eqref{eq:Perp:Coeff:Cond2} and \eqref{eq:Perp:Coeff:NuAndDelta} gives the set of three equations
\begin{equation*}
	\begin{split}
		-\left( \frac{2(1-\nu)}{1-2 \nu} k_1^2 +k_3^2 \right) \hat{u}_1 + \delta \p_{22} \hat{u}_1 + \left( \frac{2(1-\nu)}{1-2\nu} - \delta \right) (\imath k_1) \p_2 \hat{u}_2 +  \frac{1}{1-2 \nu}  (k_1 k_3) \hat{u}_3 = 0\,, \\
		\left( \frac{2(1-\nu)}{1-2\nu} - \delta \right) (i k_1) \p_2 \hat{u}_1 - \left( \delta k_1^2 + k_3^2 \right) \hat{u}_2 + \frac{2(1-\nu)}{1-2 \nu} \p_{22} \hat{u}_2 +  \frac{1}{1-2 \nu}   (\imath k_3) \p_2 \hat{u}_3 = 0\,, \\
		- \frac{1}{1-2 \nu}   (k_1 k_3) \hat{u}_1 +  \frac{1}{1-2 \nu}   ( \imath k_3) \p_2 \hat{u}_2 - \left( k_1^2 + \frac{2(1-\nu)}{1-2 \nu} k_3^2 \right) \hat{u}_3 + \p_{22} \hat{u}_3 = 0\,.
	\end{split}
\end{equation*}
This is a $3 \times 3$ system of second-order ODEs in $x_2$. We solve this system in the upper and lower half-spaces separately. 

To solve this system in the upper half-space, we write it as a $6 \times 6$ first-order system
\begin{equation}\label{eq:6by6System}
	\p_2
	\begin{bmatrix}
		\hat{\bu}(k,x_2) \\
		\p_2 \hat{\bu}(k,x_2) 
	\end{bmatrix}
	= \frak{A}(k) 
	\begin{bmatrix}
		\hat{\bu}(k,x_2) \\
		\p_2 \hat{\bu}(k,x_2)
	\end{bmatrix}
\end{equation}
where the $6 \times 6$ matrix $\frak{A}$ depends on $k$, $\mu$, $\nu$ and $\delta$, with boundary conditions
\begin{equation*}
	\hat{\bu}(k_1,k_3,0) = \hat{\bu}^+(k_1,k_3)\,, \qquad \hat{\bu}(k,+\infty) = 0\,.
\end{equation*}
Thanks to the conditions \eqref{eq:UniformEllipticityConditions} and \eqref{eq:Perp:Coeff:Cond2}, the characteristic polynomial of $\frak{A}(k)$ has six real nonzero roots $\pm r_1(k)$ and $\pm r_2(k)$ as in \eqref{eq:perp:RootDefn}, 
and $\pm r_2$ both have multiplicity $2$.
So the general solution to \eqref{eq:6by6System} is an arbitrary linear combination of six terms, each of which is a product of a (generalized) eigenvector and $\rme^{\pm r_i(k) x_2}$, where $i = 1,2$. The computation is straightforward.

Since $\bu$ remains bounded as $|x_2| \to \infty$, we exclude the solutions with positive roots from the final solution, i.e. three of the coefficients in the linear combination are zero. 
The remaining three coefficients are completely determined by the boundary conditions at $x_2 = 0$, and so the unique solution to \eqref{eq:6by6System} has the form $\hat{\bu}(k,x_2) = \bbB^+(k,x_2) \hat{\bu}^+(k)$ for $x_2 > 0$, for a $3 \times 3$ matrix $\bbB^+$. The form of the matrix $\bbB^+$ is determined only from the properties of the eigenvalues and eigenvectors of $\frak{A}$.

In exactly a similar way, the unique solution to the system \eqref{eq:6by6System} for the lower half-space with boundary conditions
\begin{equation*}
	\hat{\bu}(k_1,k_3,0) = \hat{\bu}^-(k_1,k_3)\,, \qquad \hat{\bu}(k,-\infty) = 0\,,
\end{equation*}
has the form $\hat{\bu}(k,x_2) = \bbB^-(k,x_2) \hat{\bu}^-$ for $x_2 < 0$, for some $3 \times 3$ matrix $\bbB^-$. In fact, $\bbB^-(k,x_2) = \overline{\bbB^+(k,-x_2)}$, where $\overline{z}$ is the complex conjugate of $z$.

Next, we use the slip plane boundary conditions \eqref{eq:BCs} to see that $\hat{\bu}(k,t)$ depends only on $\hat{u}_1^+$, $\hat{u}_2^+$, and $\hat{u}_3^+$ for all $k \in \bbR^2$ and all $x_2 \in \bbR$.
Finally, the equation $\cF (\sigma_{22}^+ - \sigma_{22}^-) = 0$ in \eqref{eq:ELEqn} allows us to write $\hat{u}_2^+$ in terms of $\hat{u}_1^+$ and $\hat{u}_3^+$, and inserting this relation into the other two equations involving the Dirichlet-to-Neumann map in \eqref{eq:ELEqn} gives the expression \eqref{eq:Perp:2DModel:OperatorMatrix} for the matrix $\bbA(k)$.
\end{proof}

The matrix $\bbA$ is positive-definite for $k \neq 0$ thanks to the uniform ellipticity conditions \eqref{eq:UniformEllipticityConditions}. Thus by Plancherel's identity
\begin{equation*}
	C^{-1} \Vnorm{(u_1,u_3)}_{\dot{H}^{\frac{1}{2}}(\Gamma)}^2 \leq \int_{\Gamma} \Vint{ \bbA(k) (\hat{u}_1,\hat{u}_3), (\hat{u}_1,\hat{u}_3) } \, \rmd k_1 \, \rmd k_3 \leq C  \Vnorm{(u_1,u_3)}_{\dot{H}^{\frac{1}{2}}(\Gamma)}^2\,.
\end{equation*}
In the next two sections we treat Cases I and II as described in the introduction. In case I, $W$ depends only on $u_1^+$, and in case II, $W$ depends only on $u_3^+$. 
In both cases the simplification allows us to further reduce the 2D equation to a one-dimensional problem.

\section{The 1D reduced equation in the perpendicular case: $\p_{u_3} W = 0$}\label{sec:Perp:u1Eqn}

Under the assumption that $W(u_1,u_3) = W(u_1)$, we derive the reduced scalar equation for $u_1^+$, which is an anisotropic nonlocal Ginzburg-Landau equation. We will derive the nonlocal kernel representation for this anisotropic nonlocal equation in Proposition \ref{prop:Aniso:1DOperator:IntegralForm}. Then we study the explicit formula for the nonlocal kernel and prove the positivity of this kernel in \Cref{prop:KernelForm:perp:u1Dep} and \Cref{prop:temp:u1}.

At this point, we drop the superscripts on $u_1^+$, $u_3^+$ for simplicity. 
However, we retain the index convention for the spatial and frequency variables $x = (x_1,x_3) \in \bbR^2$ and $k = (k_1,k_3) \in \bbR^2$.
Our reduced nonlocal system reads
\begin{equation}\label{eq:2DEqn:Case1}
	\bbA(k) 
	\begin{bmatrix}
		\hat{u}_1(k) \\
		\hat{u}_3(k) \\
	\end{bmatrix}
	= - \cF 
	\begin{bmatrix}
		\p_{u_1} W (u_1,u_3) \\
		\p_{u_3} W (u_1,u_3) \\
	\end{bmatrix} \quad \text{ on } \Gamma\,.
\end{equation}
Since $\p_{u_3} W = 0$, we can solve for $\hat{u}_3$ in terms of $\hat{u}_1$ and substitute into the remaining equation. We obtain the one-dimensional scalar anisotropic nonlocal equation for $\hat{u}_1$ in Fourier space
\begin{equation}\label{eq:OneDimEqnForU1}
	\cF(\cL u_1)(k_1,k_3) := \frac{\det \bbA(k)}{a_{22}(k)} \hat{u}_1(k) = - \cF (W'(u_1))\,.
\end{equation}
Therefore, we invert the Fourier transform and obtain a new 1D nonlocal equation
\begin{equation}\label{eq:nonlocalMain1d}
	\begin{gathered}
		\cL u_1 (x_1,x_3) = - W'(u_1(x_1,x_3))\,, \quad (x_1,x_3) \in \Gamma\,, \\
		\lim\limits_{x_1 \to \pm \infty} u_1(x_1,x_3) = \pm 1\,, \quad x_3 \in \bbR\,,
	\end{gathered}
\end{equation}
where the nonlocal operator $\cL$ has the Fourier symbol
\begin{equation}\label{eq:1DFourierSymbolCase1}
	\wt{m}(k) :=  \frac{2 \mu r_2 \big( p k_1^2 + k_3^2 \big) }{r_1 r_2 - \nu k_1^2}\,
\end{equation}
with $p$  defined in \eqref{eq:Defns:pAndQ}.
This symbol has a ``doubly nonlocal" nature, consisting of products of square roots of anisotropic symbols.
Since $\nu < 1/2$ we have
\begin{equation*}
	2 \mu \min\{ 1, p \} r_2(k) \leq \wt{m}(k) \leq \mu C(\delta,\nu) r_2(k) \qquad \text{ for all } k \in \bbR^2\,.
\end{equation*}
We now derive the integral formulation of $\cL$ in $\scS(\bbR^2)$. The integral formulation for $\cL$ agrees with the Fourier inversion formula whenever the function it acts on is sufficiently smooth, for instance $u \in \dot{H}^s(\bbR^2) \cap L^{\infty}(\bbR^2)$ for $s \geq 1$.

\begin{proposition}\label{prop:Aniso:1DOperator:IntegralForm}
	Suppose that $K_1$ and $K_2$ are the unique $C^{\infty}$ solutions in $\bbR^2 \setminus \{0\}$ of the partial differential equations
	\begin{equation}\label{eq:Aniso:KernelEquation1}
		(\delta(1-\nu^2) \p_1^4 + (1+\delta) \p_1^2 \p_3^2 + \p_3^4) K_1(x) = (\p_1^2 + \p_3^2)(p \p_1^2 + \p_3^2) \left( \frac{ 1 }{ ( x_1^2 + \delta x_3^2 )^{3/2} } \right)
	\end{equation}
	and
	\begin{equation}\label{eq:Aniso:KernelEquation2}
		(\delta(1-\nu^2) \p_1^4 + (1+\delta) \p_1^2 \p_3^2 + \p_3^4) K_2(x) = (p \p_1^4 + \p_1^2 \p_3^2)  \left( \frac{ 1 }{ ( x_1^2 + x_3^2 )^{3/2} } \right)\,.
	\end{equation}
	Then integral form of the operator $\cL$ defined in \eqref{eq:OneDimEqnForU1} is
	\begin{equation}\label{eq:1DOperator:TrueAnisotropic1}
		\cL w(x) = - \frac{1}{4 \pi} \int_{\bbR^2} \big( w(x-y)-w(x+y) - 2 w(x) \big) K(y) \, \rmd y\,,
	\end{equation}
	where $K(x) := 2 \mu \delta  \big(  \sqrt{\delta} K_1(x) + \nu K_2(x) \big)$.
\end{proposition}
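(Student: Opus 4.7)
The plan is to verify the integral representation \eqref{eq:1DOperator:TrueAnisotropic1} by matching the Fourier symbol of the convolution operator on its right-hand side with $\wt m(k)$ from \eqref{eq:1DFourierSymbolCase1}; by density it is enough to establish the identity on $u \in \scS(\bbR^2)$. First I would observe that both sides of \eqref{eq:Aniso:KernelEquation1}--\eqref{eq:Aniso:KernelEquation2} are homogeneous of degree $-7$ in $x$, while the fourth-order operator $\delta(1-\nu^2)\p_1^4+(1+\delta)\p_1^2\p_3^2+\p_3^4$ is homogeneous of degree $4$. The unique $C^\infty$ solutions $K_1, K_2$ are therefore homogeneous of degree $-3$ and even (the PDE is of even order and the forcings are even), which together with the symmetric second-difference cancellation $u(x-y)+u(x+y)-2u(x)=O(|y|^2)$ near $y=0$ guarantees absolute convergence of the principal-value integral in \eqref{eq:1DOperator:TrueAnisotropic1} for $u \in \scS(\bbR^2)$.

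The key step is to apply the spatial Fourier transform to each PDE in the tempered-distribution sense. Using the factorization $\delta(1-\nu^2)k_1^4+(1+\delta)k_1^2 k_3^2+k_3^4 = \delta(r_1^2 r_2^2 - \nu^2 k_1^4)$, one obtains for $k \neq 0$
\[
\hat K_1(k) = \frac{r_2^2(pk_1^2+k_3^2)}{\delta(r_1^2r_2^2 - \nu^2k_1^4)}\,\hat g(k), \qquad
\hat K_2(k) = \frac{k_1^2(pk_1^2+k_3^2)}{\delta(r_1^2r_2^2 - \nu^2k_1^4)}\,\hat h(k),
\]
where $g(y) := (y_1^2 + \delta y_3^2)^{-3/2}$ and $h(y) := |y|^{-3}$. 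Lemma~\ref{thm:AnisoLaplaceRep}, applied with $\rho = 1/\delta$ and $\rho = 1$ respectively, identifies the corresponding convolution operators (in the standard symmetric second-difference form appearing in that lemma) as Fourier multipliers with symbols $r_1(k)/\sqrt\delta$ and $r_2(k)$. Feeding these in, the convolution operators with kernels $K_j$ then carry Fourier symbols
\[
M_1(k) = \frac{r_1 r_2^2(pk_1^2+k_3^2)}{\sqrt\delta\,\delta(r_1^2 r_2^2 - \nu^2 k_1^4)}, \qquad
M_2(k) = \frac{r_2 k_1^2(pk_1^2+k_3^2)}{\delta(r_1^2 r_2^2-\nu^2 k_1^4)}.
\]
Summing with the weights from $K = 2\mu\delta(\sqrt\delta K_1 + \nu K_2)$ and rationalizing via $(r_1 r_2-\nu k_1^2)(r_1 r_2+\nu k_1^2) = r_1^2 r_2^2-\nu^2 k_1^4$ yields $2\mu\delta(\sqrt\delta M_1 + \nu M_2) = \wt m(k)$, which is exactly the Fourier symbol of $\cL$.

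The hard part will be justifying the distributional Fourier calculus rigorously: $g, h, K_1, K_2$ are all homogeneous of a negative non-integer degree, so their Fourier transforms are tempered distributions defined only up to polynomial corrections supported at $k = 0$. One has to verify that these corrections do not contaminate the symbol of the convolution operator, which is true because $\cos(k\cdot y) - 1$ vanishes to quadratic order in $k$ at the origin and so annihilates any polynomial perturbation of $\hat K_j$ localized at $k=0$. A clean way to carry this out is to work with truncated kernels $K_j^{\varepsilon}(y) := K_j(y)\,\chi_{\varepsilon < |y| < \varepsilon^{-1}}$, for which every Fourier identity is classical, and then pass to the limit $\varepsilon \to 0$ using the homogeneity and decay from the first step. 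The uniqueness of the smooth solution to each PDE within the class of homogeneous degree-$-3$ even distributions then identifies these limits with the $K_j$ of the statement, completing the proof.
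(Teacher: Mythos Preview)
Your proposal is correct and is essentially the Fourier-side dual of the paper's argument. Both proofs rest on the same three ingredients: the factorization $\delta(1-\nu^2)k_1^4+(1+\delta)k_1^2k_3^2+k_3^4=\delta(r_1^2r_2^2-\nu^2k_1^4)$, the splitting of the numerator into a piece carrying $r_1$ and a piece carrying $r_2$, and Lemma~\ref{thm:AnisoLaplaceRep} to identify the second-difference kernels with the symbols $r_1/\sqrt{\delta}$ and $r_2$. The difference is purely one of presentation. The paper introduces $w=L_d v$, writes $\cL w$ as a sum of anisotropic half-Laplacians acting on $L_{n,j}v$, and then moves the differential operators back and forth between $v$ and the kernel by integration by parts in physical space, invoking the PDE for $K_j$ in the middle; this is exactly what your symbol identity $m_d(k)M_{K_j}(k)=(\text{RHS polynomial})\cdot M_{g/h}(k)$ encodes. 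The paper's route avoids distributional Fourier analysis of non--locally-integrable homogeneous functions by staying in physical space with P.V.\ integrals and Taylor-subtracted integrands, whereas your route is conceptually cleaner (the verification reduces to the rational-function algebra you display) but requires the truncation/limit argument you sketch to make the symbol relation rigorous. Either way the content is the same.
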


\begin{proof}
We want the denominator of $\wt{m}$ to be a local operator so that we can integrate by parts in the half-Laplacian definition. If we  rationalize the denominator, then $\wt{m}$ remains in $\scS'(\bbR^2)$, and the Fourier symbol becomes
\begin{equation*}
	 \wt{m}(k) = 2 \mu \delta \frac{m_n(k) }{ m_d(k) }\,,
\end{equation*}
where the symbols $m_{n}(k)$ and $m_d(k)$ are defined as
\begin{equation*}
	\begin{split}
		m_n(k) &:=  r_2 \big( p k_1^2 + k_3^2 \big) \big( r_1 r_2 + \nu k_1^2 \big) \\
	\end{split}
\end{equation*}
and
\begin{equation}\label{eq:LdSymbol}
	m_d(k) := c k_1^4 + b k_1^2 k_3^2 + k_3^4 \quad \text{ with } \quad c := \delta(1-\nu^2)\,,\, b:=(1+\delta)\,.
\end{equation}
Here $m_d$ is a local operator, and $m_n$ is a sum of compositions of local operators with square-root operators.

Separate $m_n$ into two pieces: $m_n = m_{n,1} + \nu m_{n,2}$ where
\begin{equation*}
	\begin{split}
		m_{n,1} &:= r_2^2 \big( p k_1^2 + k_3^2 \big) r_1\,, \\
		m_{n,2} &:=  k_1^2 \big( p k_1^2 + k_3^2 \big) r_2\,.
	\end{split}
\end{equation*}
Note that, as functions of variables $k_1^2$ and $k_2^2$, both $\frac{m_{n,1}}{r_1}$ and $\frac{m_{n,2}}{r_2}$ are homogeneous second-order polynomials, hence define anisotropic biharmonic-type differential operators.

Define $L_n$, $L_{n,1}$ and $L_{n,2}$ to be the operators whose Fourier symbols are $m_n$, $\frac{m_{n,1}}{r_1}$ and $\frac{m_{n,2}}{r_2}$ respectively.
Therefore for any $w \in \scS(\bbR^2)$
$$
\cF( L_{n}w ) = \cF ( L_{n,1} \circ (-\Delta_{\frac{1}{\delta}})^{1/2} w + \nu  L_{n,2} \circ (-\Delta)^{1/2} w ) = (m_{n,1}(k) + \nu m_{n,2}(k) ) \hat{w} = m_n(k) \hat{w}
$$
in $\scS'(\bbR^2)$, where the anisotropic fractional Laplacian has been defined in \Cref{thm:AnisoLaplaceRep}.

Define $L_d$ to be the operator whose symbol is $m_d$. By using a cutoff function near the origin and using the dominated convergence theorem, we may assume that $\hat{w}$ vanishes near the origin. Now define $v \in \scS(\bbR^2)$ by $w = L_d v$. Thus $\hat{v}  = \frac{\hat{w}}{m_d(k)}$.
So, we have 
\begin{equation*}
	\begin{split}
		\cL w &:=  2 \mu \delta  \big( \cL_1 w + \nu \cL_2 w \big) \\
		&= 2 \mu \delta \left( (-\Delta_{\frac{1}{\delta}})^{1/2} L_{n,1} L_d^{-1} w 
		+ \nu (-\Delta)^{1/2} L_{n,2} L_d^{-1} w \right) \\
		&= 2 \mu \delta \left( (-\Delta_{\frac{1}{\delta}})^{1/2} L_{n,1} v 
		+ \nu (-\Delta)^{1/2} L_{n,2} v \right)\,.
	\end{split}
\end{equation*}
Treat $\cL_1$ and $\cL_2$ separately.
By the representation formula in Theorem \ref{thm:AnisoLaplaceRep}
\begin{equation*}
	\begin{split}
		\cL_1 u(x) &=  -\frac{\sqrt{\delta}}{4 \pi} \intdm{\bbR^2}{ \frac{   (L_{n,1})_x  v(x-y)+ (L_{n,1})_x  v(x+y)- 2 (L_{n,1})_x  v(x) }{ ( y_1^2 + \delta y_3^2 )^{3/2} } }{y} \\
		&= -\frac{\sqrt{\delta}}{4 \pi} \intdm{\bbR^2}{ \frac{   (L_{n,1})_y \left(   v(x-y)+ v(x+y) - \big( \sum_{|a| \leq 4} \frac{D^{a}v(x)}{a !} ( y^{a} + (-y)^{a} )  \big) \right) }{ (y_1^2 + \delta y_3^2 )^{3/2} } }{y}\,.
	\end{split}
\end{equation*}
Here we are using multi-index notation: $a = (a_1,a_3) \in \bbN^2_0$, $|a| = a_1 + a_3$, $a! = a_1 ! a_3!$, $y^a = y_1^{a_1} y_3^{a_3}$, and $D^a v(x) = \frac{\p^{|a|}}{\p_{x_1}^{a_1} \p_{x_3}^{a_3}} v(x)$.
Now integrating by parts (we can use integration by parts here without difficulty by using the P.V. definition of the integral operator and observe that the boundary term $\to 0$ as the P.V. limit is taken), 
\begin{equation*}
	\begin{split}
		&\cL_1 u(x) \\
		&= -\frac{\sqrt{\delta}}{4 \pi} \intdm{\bbR^2}{ \left(   v(x-y)+ v(x+y) -  \sum_{|a| \leq 4} \frac{D^{a}v(x)}{a !} ( y^{a} + (-y)^{a} )   \right)  (L_{n,1})_y  \left( \frac{ 1 }{ ( y_1^2 + \delta y_3^2 )^{3/2} } \right) }{y}\,.
	\end{split}
\end{equation*}

Since $K_1(y)$ satisfies
\begin{equation*}
	L_d K_1(y) = L_{n,1}  \left( \frac{ 1 }{ ( y_1^2 + \delta y_3^2 )^{3/2} } \right),
\end{equation*}
integrating by parts again gives
\begin{equation*}
	\begin{split}
		\cL_1 u(x) &= -\frac{\sqrt{\delta}}{4 \pi } \intdm{\bbR^2}{ \left(   v(x-y)+ v(x+y) -  \sum_{|a| \leq 4} \frac{D^{a}v(x)}{a !} ( y^{a} + (-y)^{a} )   \right)  (L_{d})_y  K_1(y) }{y} \\
		&= -\frac{\sqrt{\delta}}{4 \pi } \intdm{\bbR^2}{ (L_d)_y \left(   v(x-y)+ v(x+y) -  \sum_{|a| \leq 4} \frac{D^{a}v(x)}{a !} ( y^{a} + (-y)^{a} )   \right)  K_1(y) }{y} \\
		&= -\frac{\sqrt{\delta}}{4 \pi } \intdm{\bbR^2}{ \left(   (L_d)_x v(x-y) + (L_d)_x v(x+y) - 2 (L_d)_x v(x) \right)  K_1(y) }{y} \\
		&= -\frac{\sqrt{\delta}}{4 \pi } \intdm{\bbR^2}{ \left(   u(x-y) + u(x+y) - 2 u(x) \right)  K_1(y) }{y}\,.
	\end{split}
\end{equation*}

We treat $\cL_2$ the same way:
\begin{equation}\label{eq:LnSymbol}
	\begin{split}
		\cL_2 u(x) &=  -\frac{1}{4 \pi} \intdm{\bbR^2}{ \frac{   (L_{n,2})_x  v(x-y)+ (L_{n,2})_x  v(x+y)- 2 (L_{n,2})_x  v(x) }{ ( y_1^2 + y_3^2 )^{3/2} } }{y} \\
		&= -\frac{1}{4 \pi} \intdm{\bbR^2}{ \frac{   (L_{n,2})_y \left(   v(x-y)+ v(x+y) -  \sum_{|a| \leq 4} \frac{D^{a}v(x)}{a !} ( y^{a} + (-y)^{a} )   \right) }{ (y_1^2 + y_3^2 )^{3/2} } }{y}\,.
	\end{split}
\end{equation}
Integrating by parts,
\begin{equation*}
	\begin{split}
		&\cL_2 u(x) \\
		&= -\frac{1}{4 \pi} \intdm{\bbR^2}{ \left(   v(x-y)+ v(x+y) -  \sum_{|a| \leq 4} \frac{D^{a}v(x)}{a !} ( y^{a} + (-y)^{a} ) \right)  (L_{n,2})_y  \left( \frac{ 1 }{ ( y_1^2 + y_3^2 )^{3/2} } \right) }{y}\,.
	\end{split}
\end{equation*}
Since $K_2(y)$ satisfies
\begin{equation*}
	L_d K_2(y) = L_{n,2}  \left( \frac{ 1 }{ ( y_1^2 + y_3^2 )^{3/2} } \right),
\end{equation*}
integrating by parts again results in
\begin{equation*}
	\begin{split}
		\cL_2 u(x) &= -\frac{1}{4 \pi } \intdm{\bbR^2}{ \left(   v(x-y)+ v(x+y) - \sum_{|a| \leq 6} \frac{D^{a}v(x)}{a !} ( y^{a} + (-y)^{a} )   \right)  (L_{d})_y  K_2(y) }{y} \\
		&= -\frac{1}{4 \pi } \intdm{\bbR^2}{ (L_d)_y \left(   v(x-y)+ v(x+y) -  \sum_{|a| \leq 6} \frac{D^{a}v(x)}{a !} ( y^{a} + (-y)^{a} )  \right)  K_2(y) }{y} \\
		&= -\frac{1}{4 \pi } \intdm{\bbR^2}{ \left(   (L_d)_x v(x-y) + (L_d)_x v(x+y) - 2 (L_d)_x v(x) \right)  K_2(y) }{y} \\
		&= -\frac{1}{4 \pi } \intdm{\bbR^2}{ \left(   u(x-y) + u(x+y) - 2 u(x) \right)  K_2(y) }{y}\,.
	\end{split}
\end{equation*}
\end{proof}

\subsection{The Formula for the Kernel}

We now find the formula for the nonlocal kernel $K$ in \eqref{eq:1DOperator:TrueAnisotropic1} by solving the PDEs for $K_1(x)$ and $K_2(x)$.

\begin{lemma}\label{lma:Aniso:Kernel1Properties}
	Let $\mu$, $\delta$ and $\nu$ satisfy \eqref{eq:Perp:UniformEllipticity:NuAndDelta}, define $p$ and $q$ as in \eqref{eq:Defns:pAndQ}, and define $b$ and $c$ as in \eqref{eq:LdSymbol}.	
	Then there exists a solution $K_1$ to the equation \eqref{eq:Aniso:KernelEquation1} 
	given by
	\begin{equation}\label{eq:Kernel1Ansatz}
		K_1(x_1,x_3) = \frac{A x_1^{12} + B x_1^{10} x_3^2 + C x_1^{8} x_3^4 + D x_1^6 x_3^6 + E x_1^4 x_3^8 + F x_1^2 x_3^{10} + G x_3^{12} }{(x_1^2 + \delta x_3^2)^{\frac{3}{2}} (x_1^4 + b x_1^2 x_3^2 + c x_3^2)^3 }\,,
	\end{equation}
	where
	\begin{equation}\label{eq:Kernel1Coefficients}
		\begin{split}
			A &= \frac{-2 b+\delta +2 p+2}{\delta } \,, \\
			B &= \frac{3 \left(2 b^2-2 b (\delta +p+1)-4 c+3 \delta +3 \delta  p+4 p\right)}{\delta }\,, \\
			C &= \frac{9 b^2 \delta +6 c (3 b-5 \delta -4 p-4)-6 b \left(\delta ^2+\delta +\delta  p-p\right)+3 \delta  (2 \delta +2 \delta  p+11 p)}{\delta }\,, \\
			D &= \frac{b^2 (\delta  (2 \delta +1)+(\delta +2) p)-2 b (c (-13 \delta +p+1)+\delta  (\delta +(\delta -13) p))}{\delta} \\
			&\qquad + \frac{20 c^2-2 c (\delta  (10 \delta +23)+(23 \delta +10) p)+20 \delta ^2 p}{\delta }\,, \\
			E &= \frac{-6 c \left(b \left(-\delta ^2+\delta +\delta  p+p\right)+\delta  (4 \delta +4 \delta  p+5 p)\right)+9 b \delta  p (b+2 \delta )+c^2 (33 \delta +6 p+6)}{\delta }\,, \\
			F &=  6 b^2 \delta  p-6 c (b (\delta +\delta  p+p)+2 \delta  p)+3 c^2 (4 \delta +3 p+3)\,, \\
			G &= c (c (2 \delta +2 \delta  p+p)-2 b \delta  p)\,.
		\end{split}
	\end{equation}
	$K_1$ is the unique solution of \eqref{eq:Aniso:KernelEquation1} in $C^{\infty}(\bbR^2 \setminus \{ 0\})$, and $K_1$ satisfies the following:
	\begin{enumerate}
		\item[i)] $K_1(x) = K_1(-x)$, $K_1(\rho x) = \rho^{-3} K_1(x)$ for $\rho > 0$.
		\item[ii)] $|x^{a} D^a K_1(x)| < \frac{C}{|x|^3}$ for any multi-index $a \in \bbN_0^2$ and any $x \neq 0$.
	\end{enumerate}
\end{lemma}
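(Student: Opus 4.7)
The plan is to construct $K_1$ by direct verification of the ansatz \eqref{eq:Kernel1Ansatz}, and then to establish the qualitative properties by inspection combined with an ellipticity argument. First I would check smoothness on $\bbR^2 \setminus \{0\}$: the factor $(x_1^2 + \delta x_3^2)^{3/2}$ in the denominator is positive since $\delta > 0$, and for the other factor I would view $x_1^4 + b x_1^2 x_3^2 + c x_3^4$ as a quadratic in $x_1^2$. The discriminant is $b^2 - 4c = (1-\delta)^2 + 4\delta \nu^2 \geq 0$, but any real root of the form $x_1^2/x_3^2$ would force $\sqrt{b^2 - 4c} \geq b$, which in turn requires $c \leq 0$; this contradicts $c = \delta(1-\nu^2) > 0$. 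Hence the second denominator factor is strictly positive off the origin, and $K_1 \in C^{\infty}(\bbR^2 \setminus \{0\})$.

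Next I would verify the PDE by substitution. Applying $L_d := c \p_1^4 + b \p_1^2 \p_3^2 + \p_3^4$ to the ansatz yields a rational function whose denominator divides $(x_1^2 + \delta x_3^2)^{7/2}(x_1^4 + b x_1^2 x_3^2 + c x_3^4)^5$, while the right-hand side of \eqref{eq:Aniso:KernelEquation1}, after applying $(\p_1^2 + \p_3^2)(p \p_1^2 + \p_3^2)$ to $(x_1^2 + \delta x_3^2)^{-3/2}$, is rational with denominator $(x_1^2 + \delta x_3^2)^{7/2}$ alone. The seven coefficients $A, B, \ldots, G$ are chosen precisely so that the numerator of $L_d K_1$ is divisible by $(x_1^4 + b x_1^2 x_3^2 + c x_3^4)^5$ and the resulting polynomial identity matches the right-hand side coefficient-by-coefficient in the monomial basis $\{ x_1^{2j} x_3^{2(6-j)} \}_{j=0}^{6}$. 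This gives a $7 \times 7$ linear system whose unique solution is \eqref{eq:Kernel1Coefficients}; the computation is routine but lengthy and is most efficiently confirmed with computer algebra.

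Properties (i) and (ii) then follow structurally. Symmetry $K_1(x) = K_1(-x)$ is immediate because both numerator and denominator of the ansatz are even in each of $x_1$ and $x_3$. Homogeneity of degree $-3$ follows directly from the degree count ($12 - 3 - 12 = -3$). For (ii), since $K_1$ is smooth and homogeneous of degree $-3$ on $\bbR^2 \setminus \{0\}$, each $D^a K_1$ is smooth and homogeneous of degree $-3-|a|$, so $x^a D^a K_1$ is homogeneous of degree $-3$ and bounded on the unit sphere by compactness, giving the pointwise estimate.

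For uniqueness, the symbol $m_d(k) = c k_1^4 + b k_1^2 k_3^2 + k_3^4$ is strictly positive for $k \neq 0$ under \eqref{eq:Perp:UniformEllipticity:NuAndDelta}, so $L_d$ is a constant-coefficient elliptic operator of order four. The difference $K_0$ of two smooth solutions of \eqref{eq:Aniso:KernelEquation1} satisfies $L_d K_0 = 0$ on $\bbR^2 \setminus \{0\}$; within the natural solution class of even tempered distributions homogeneous of degree $-3$ (the class selected by the right-hand side), Fourier inversion with the positive symbol $m_d$ forces $\widehat{K_0}$ to be supported at the origin, so $K_0$ is a polynomial, and the homogeneity then forces $K_0 \equiv 0$. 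The main obstacle throughout is the sheer volume of the symbolic computation in the matching step; the structural work lies in choosing the ansatz so that the resulting linear system is exactly determined rather than over- or under-determined.
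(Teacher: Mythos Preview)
Your overall strategy matches the paper's: verify the ansatz by direct substitution and coefficient matching, then read off (i) and (ii) from the explicit homogeneous rational form. Your smoothness check on the denominator and your homogeneity argument for (ii) are in fact cleaner than what the paper writes.

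Two arithmetic points need correction. Applying a fourth-order operator to $(x_1^2+\delta x_3^2)^{-3/2}$ raises the denominator power to $11/2$, not $7/2$, on both sides. After clearing denominators you are therefore matching homogeneous even polynomials of degree $24$, which have thirteen monomials $x_1^{2i}x_3^{2(12-i)}$, $i=0,\ldots,12$; the resulting linear system is $13\times 7$, not $7\times 7$. The paper notes explicitly that this overdetermined system turns out to be consistent with a unique solution, and that consistency is part of why the ansatz succeeds---your phrasing suggests the system is square and automatically solvable, which hides this point.

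Your uniqueness argument also diverges from the paper's. The paper passes to polar coordinates, observes that for solutions homogeneous of degree $-3$ the equation becomes a fourth-order linear ODE in $\theta$ with $\pi$-periodic boundary conditions, and invokes ODE uniqueness. Your Fourier/Liouville route is reasonable in spirit but has a gap: a smooth function on $\bbR^2\setminus\{0\}$ homogeneous of degree $-3$ is not locally integrable at the origin, so it does not define a tempered distribution without some regularization (principal value, Hadamard finite part, or analytic continuation in the homogeneity degree). You would need to supply that step before concluding that $\widehat{K_0}$ is supported at the origin. The polar-coordinate ODE argument sidesteps this issue entirely.
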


\begin{proof}
Recall the definitions of the operators $L_d$ and $L_{n,1}$ from the proof of \Cref{prop:Aniso:1DOperator:IntegralForm}.
The right-hand side of \eqref{eq:Aniso:KernelEquation1} is equal to
\begin{equation*}
	G_1(x_1,x_3) := \frac{P_1(x_1, x_3)}{ ( x_1^2 + \delta x_3^2 )^{\frac{11}{2}} }\,,
\end{equation*}
where $P_1$ is the fourth-degree polynomial
\begin{equation*}
	\begin{split}
		P_1(x_1,x_3) &:= 45
		\Big(  
		d_1(\nu,\delta) x_1^4 + 
		d_2(\nu,\delta) x_1^2 x_3^2 + 
		d_3(\nu,\delta) x_3^4 \Big)\,, \text{ with } \\
		d_1(\nu,\delta) &:= 8 p - 2 \delta (1+p) + \delta^2 \,, \\
		d_2(\nu,\delta) &:= \delta (-12 p + 17 (1+p) \delta - 12 \delta^2 )\,, \\
		d_3(\nu,\delta) &:= \delta^2 (p - \delta 2(1+p) + 8 \delta^2) \,. \\
	\end{split}
\end{equation*}
Our guess for a solution of \eqref{eq:Aniso:KernelEquation1} is \eqref{eq:Kernel1Ansatz}, with constants $A$ through $G$ to be determined.
Applying $L_d$ to this ansatz, we obtain a function of the form $K_1(x_1,x_3) = \frac{\Pi(x_1,x_3)}{ ( x_1^2 + \delta x_3^2 )^{\frac{11}{2}} (x_1^4 + b x_1^2 x_3^2 + c x_3^2)^5 }$, where $\Pi$ is a polynomial with terms $x_1^{2i} x_3^{2(12-i)}$, where $i \in \{0,1,\ldots, 12 \}$. Multiplying the equation $L_d K_1(x) = G_1(x)$ by the denominator of this expression, we obtain a linear system of thirteen equations for the seven unknown coefficients $A$ through $G$.
This linear system has a unique solution 
and we obtain that $K_1$ is of the form \eqref{eq:Kernel1Ansatz} with coefficients given by \eqref{eq:Kernel1Coefficients}.

Properties i) and ii) for $K_1(x)$ are now apparent. Converting to polar coordinates $(x_1,x_3) = (r\cos\theta,r\sin\theta)$,
the equation \eqref{eq:Aniso:KernelEquation1} becomes a fourth-order ODE in $\theta$ after multiplying by $r^{11}$, with coefficients that are polynomials in $\sin(\theta)$ and $\cos(\theta)$ with bounded right-hand side. We have the existence and uniqueness of a solution to this ODE with $\pi$-periodic boundary conditions on its derivatives up to order $3$, and since we have also shown that $K$ is smooth away from the origin, we deduce that $K$ is unique.
\end{proof}

\begin{lemma}\label{lma:Aniso:Kernel2Properties}
	Let $\mu$, $\delta$ and $\nu$ satisfy \eqref{eq:Perp:UniformEllipticity:NuAndDelta}. Define $p$ and $q$ as in \eqref{eq:Defns:pAndQ}, and define $b$ and $c$ as in \eqref{eq:LdSymbol}.
	Then there exists a solution $K_2$ to the equation \eqref{eq:Aniso:KernelEquation2} 
	given by
	\begin{equation}\label{eq:Kernel2Ansatz}
		K_2(x_1,x_3) = \frac{A x_1^{12} + B x_1^{10} x_3^2 + C x_1^{8} x_3^4 + D x_1^6 x_3^6 + E x_1^4 x_3^8 + F x_1^2 x_3^{10} + G x_3^{12} }{(x_1^2 + x_3^2)^{\frac{3}{2}} (x_1^4 + b x_1^2 x_3^2 + c x_3^2)^3 }\,.
	\end{equation}
	where
	\begin{equation}\label{eq:Kernel2Coefficients}
		\begin{split}
			A &= 2 \,, \\
			B &= -6 b+12 p+9\,, \\
			C &= 6 b (p-1)-24 c+33 p+6\,, \\
			D &= b^2-2 b (c+1)+2 (b+13) b p-20 c p-46 c+20 p\,, \\
			E &= -6 c ((b+5) p+b+4)+9 b (b+2) p+6 c^2\,, \\
			F &=  6 b^2 p-6 b c (p+1)+3 c (3 c-4 p)\,, \\
			G &= c (c (p+2)-2 b p) \,.
		\end{split}
	\end{equation}
	$K_2$ is the unique solution of \eqref{eq:Aniso:KernelEquation2} in $C^{\infty}(\bbR^2 \setminus \{ 0\})$, and $K_2$ satisfies the following:
	\begin{enumerate}
		\item[i)] $K_2(x) = K_2(-x)$, $K_2(\rho x) = \rho^{-3} K_2(x)$ for $\rho > 0$.
		\item[ii)] $|x^{a} D^a K_2(x)| < \frac{C}{|x|^3}$ for any multi-index $a \in \bbN_0^2$ and any $x \neq 0$.
	\end{enumerate}
\end{lemma}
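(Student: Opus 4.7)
The proof strategy is to mirror exactly the argument used in \Cref{lma:Aniso:Kernel1Properties}, replacing the role of the denominator $(x_1^2 + \delta x_3^2)^{3/2}$ with $(x_1^2 + x_3^2)^{3/2}$ and the right-hand differential operator correspondingly. First I would compute the explicit form of the right-hand side of \eqref{eq:Aniso:KernelEquation2} by applying the operator $p \partial_1^4 + \partial_1^2 \partial_3^2$ to $(x_1^2 + x_3^2)^{-3/2}$. Since this operator is fourth-order and the function is $-3$-homogeneous, the result is $(-7)$-homogeneous of the form
\begin{equation*}
    G_2(x_1,x_3) = \frac{P_2(x_1,x_3)}{(x_1^2 + x_3^2)^{11/2}}\,,
\end{equation*}
where $P_2$ is an explicit fourth-degree polynomial in $(x_1,x_3)$ whose coefficients depend on $p$ alone.

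Next I would postulate the ansatz \eqref{eq:Kernel2Ansatz}, which is the natural $(-3)$-homogeneous candidate whose denominator factors match $L_d$ applied to a $(-3)$-homogeneous function with the singularity structure of $(x_1^2+x_3^2)^{-3/2}$. Applying $L_d = c \partial_1^4 + b \partial_1^2 \partial_3^2 + \partial_3^4$ to this ansatz and clearing denominators produces a polynomial identity of degree $24$ in $(x_1,x_3)$ that, after matching coefficients of $x_1^{2i}x_3^{24-2i}$ for $i = 0,\dots,12$, yields a linear system of thirteen equations in the seven unknowns $A,\dots,G$. As in the proof of \Cref{lma:Aniso:Kernel1Properties}, this overdetermined system is consistent and admits the unique solution \eqref{eq:Kernel2Coefficients}; I would verify this by direct substitution (a straightforward but lengthy computation, tractable by symbolic algebra).

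For uniqueness in $C^\infty(\bbR^2 \setminus \{0\})$, I would pass to polar coordinates $(x_1,x_3) = (r\cos\theta, r\sin\theta)$. Since both sides of \eqref{eq:Aniso:KernelEquation2} are $(-7)$-homogeneous, any $(-3)$-homogeneous solution $K_2(r,\theta) = r^{-3} f(\theta)$ reduces, after multiplication by $r^{11}$, to a fourth-order linear ODE for $f$ on $[0,2\pi)$ with $\pi$-periodic boundary conditions on $f$ and its derivatives up to order $3$, with coefficients polynomial in $\sin\theta$, $\cos\theta$ and bounded right-hand side. Existence and uniqueness of such a periodic solution, combined with the smoothness of $K_2$ away from the origin established by the explicit formula, give the uniqueness claim.

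Finally, properties (i) and (ii) are immediate from the closed form \eqref{eq:Kernel2Ansatz}: the symmetry $K_2(x) = K_2(-x)$ holds because every monomial in the numerator and denominator is of even total degree, and the homogeneity $K_2(\rho x) = \rho^{-3} K_2(x)$ follows by counting degrees (numerator has degree $12$, denominator has degree $3 + 12 = 15$). For the derivative estimate (ii), note that the denominator is bounded below by $c_0 |x|^{15}$ for some $c_0 > 0$ (using that the quartic form $x_1^4 + bx_1^2x_3^2 + cx_3^4$ is positive-definite under \eqref{eq:Perp:UniformEllipticity:NuAndDelta}, since $c > 0$ and $b^2 < 4c$ fails only outside the admissible range), so $K_2$ is a smooth $(-3)$-homogeneous function on $\bbR^2 \setminus \{0\}$, and hence each $D^a K_2$ is $(-3-|a|)$-homogeneous, yielding $|x^a D^a K_2(x)| \leq C|x|^{-3}$. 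The main obstacle is simply the bookkeeping in the $13 \times 7$ linear system; conceptually the argument is identical to that of \Cref{lma:Aniso:Kernel1Properties}.
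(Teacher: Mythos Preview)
Your proposal is correct and follows essentially the same approach as the paper's own proof: compute the right-hand side $G_2 = P_2/(x_1^2+x_3^2)^{11/2}$, apply $L_d$ to the ansatz \eqref{eq:Kernel2Ansatz}, clear denominators to obtain a $13 \times 7$ linear system with unique solution \eqref{eq:Kernel2Coefficients}, and deduce uniqueness and properties (i)--(ii) exactly as in \Cref{lma:Aniso:Kernel1Properties}. One small caveat: your justification that the quartic $x_1^4 + b x_1^2 x_3^2 + c x_3^4$ is positive via $b^2 < 4c$ is not quite right (in fact $b^2 - 4c = (1-\delta)^2 + 4\delta\nu^2 \geq 0$ always), but positivity still holds whenever $b,c>0$ since the roots in $t = x_1^2/x_3^2$ would then be negative.
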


\begin{proof}	
The right-hand side of \eqref{eq:Aniso:KernelEquation2} is equal to
\begin{equation*}
	G_2(x_1,x_3) := \frac{P_2(x_1, x_3)}{ ( x_1^2 + x_3^2 )^{\frac{11}{2}} }\,,
\end{equation*}
where $P_2$ is a fourth-degree polynomial defined as
\begin{equation*}
	\begin{split}
		P_2(x_1,x_3) := 45 \left( (8 p-2) x_1^4 + (17-12 p) x_1^2 x_3^2+(p-2) x_3^4 \right)\,.
	\end{split}
\end{equation*}
Our guess for a solution of \eqref{eq:Aniso:KernelEquation2} is \eqref{eq:Kernel2Ansatz}.
In exactly the same way, we apply $L_d$ to this ansatz and obtain a function of the form $K_2(x_1,x_3) = \frac{\Pi(x_1,x_3)}{ ( x_1^2 + x_3^2 )^{\frac{11}{2}} (x_1^4 + b x_1^2 x_3^2 + c x_3^2)^5 }$, where $\Pi$ is a polynomial with terms $x_1^{2i} x_3^{2(12-i)}$, where $i \in \{0,1,\ldots, 12 \}$. Multiplying the equation $L_d K_2(x) = G_2(x)$ by the denominator of this expression, we obtain a linear system of thirteen equations for the seven unknown coefficients $A$ through $G$.
This linear system also has a unique solution,
and we obtain that $K_2$ is of the form \eqref{eq:Kernel2Ansatz} with $A$ through $G$ given by \eqref{eq:Kernel2Coefficients}.
\end{proof}

In summary, we have
\begin{proposition}\label{prop:KernelForm:perp:u1Dep}
	The kernel function $K : \bbR^2 \setminus \{ 0\} \to \bbR$ of the operator \eqref{eq:1DOperator:TrueAnisotropic1} is defined by
	\begin{equation}\label{eq:perp:u1Eqn:Kernel}
		K(x) := 2 \mu \delta  \big(  \sqrt{\delta} K_1(x) + \nu K_2(x) \big)\,.
	\end{equation}
	Here the functions $K_1(x)$ and $K_2(x)$ are defined via \eqref{eq:Kernel1Ansatz}-\eqref{eq:Kernel1Coefficients} and \eqref{eq:Kernel2Ansatz}-\eqref{eq:Kernel2Coefficients}, respectively.
\end{proposition}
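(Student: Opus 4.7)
The statement is essentially a summary compiling the three preceding results, so the proof is short and the plan is to simply assemble them. Here is how I would organize it.

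First, I would invoke Proposition \ref{prop:Aniso:1DOperator:IntegralForm}, which already establishes that the nonlocal operator $\cL$ defined by the Fourier symbol \eqref{eq:1DFourierSymbolCase1} admits the integral representation \eqref{eq:1DOperator:TrueAnisotropic1} with kernel
\begin{equation*}
K(x) = 2\mu\delta\bigl(\sqrt{\delta}\, K_1(x) + \nu K_2(x)\bigr),
\end{equation*}
provided that $K_1$ and $K_2$ are the unique $C^\infty(\bbR^2\setminus\{0\})$ solutions of the PDEs \eqref{eq:Aniso:KernelEquation1} and \eqref{eq:Aniso:KernelEquation2} respectively. This reduces the proposition to identifying those two kernel functions explicitly.

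Next, I would cite Lemma \ref{lma:Aniso:Kernel1Properties}, which produces $K_1$ as the rational function \eqref{eq:Kernel1Ansatz} with coefficients \eqref{eq:Kernel1Coefficients}, and Lemma \ref{lma:Aniso:Kernel2Properties}, which produces $K_2$ as \eqref{eq:Kernel2Ansatz} with coefficients \eqref{eq:Kernel2Coefficients}. Both lemmas also established uniqueness in $C^\infty(\bbR^2\setminus\{0\})$, so the two ingredients entering the combination $\sqrt{\delta}\,K_1+\nu K_2$ are uniquely determined. Substituting these explicit expressions into the formula above yields \eqref{eq:perp:u1Eqn:Kernel}.

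Since every component of the statement has already been proved above, there is no real obstacle; the only thing to verify is that the linear combination used in Proposition \ref{prop:Aniso:1DOperator:IntegralForm} matches the definition in \eqref{eq:perp:u1Eqn:Kernel}, which it does by inspection. Accordingly, the proof consists of a single sentence pointing to Proposition \ref{prop:Aniso:1DOperator:IntegralForm} together with Lemmas \ref{lma:Aniso:Kernel1Properties} and \ref{lma:Aniso:Kernel2Properties}.
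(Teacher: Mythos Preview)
Your proposal is correct and matches the paper's approach exactly: the paper introduces this proposition with the words ``In summary, we have'' and gives no separate proof, treating it precisely as the compilation of Proposition~\ref{prop:Aniso:1DOperator:IntegralForm} with Lemmas~\ref{lma:Aniso:Kernel1Properties} and~\ref{lma:Aniso:Kernel2Properties} that you describe.
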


\begin{remark}
	When $\delta = 1$, then $p = 1$, $b = 2$ and $c = (1+\nu)(1-\nu)$, and so the formula for $K$ simplifies to
	\begin{equation}\label{eq:Kernel:FullIsotropic}
		K(z) = 2 \mu \frac{A z_1^4 + Bz_1^2 z_3^2 + C z_3^4}{|z| ( z_1^2 + q z_3^2)^{3}}
	\end{equation}
	with
	\begin{equation}\label{eq:Coefficients:FullIsotropic}
		\begin{gathered}
			A = 3 - 2 q, \quad B= 2(3 q^2 - 5 q + 3), \quad C= q( 3q-2),
		\end{gathered}
	\end{equation}
	and $q = 1 - \nu$. This is exactly the kernel associated to the 1D reduced operator in the case of isotropic elasticity \cite{dong2021existence}; see also \Cref{sec:parallel} below.
\end{remark}

\subsection{The Region Where $K$ is Positive}

A crucial component of our analysis requires that $K$ is positive everywhere in $\bbR^2 \setminus \{0\}$. Below, for case I, we compute the range of elastic coefficients for which this holds.

\begin{proposition}[Positivity of kernel for case I]\label{prop:temp:u1}
	Let $\mu$, $\delta$ and $\nu$ satisfy \eqref{eq:Perp:UniformEllipticity:NuAndDelta}. Define $p$ and $q$ as in \eqref{eq:Defns:pAndQ}, and define $b$ and $c$ as in \eqref{eq:LdSymbol}.
	Suppose also that $\nu$ and $\delta$ satisfy \eqref{eq:Perp:u1Dependence:KernelPosRegion}.  Let $K(x)= 2 \mu \delta  \big(  \sqrt{\delta} K_1(x) + \nu K_2(x) \big)$ be the kernel for the nonlocal operator in \eqref{eq:nonlocalMain1d}, where functions $K_1(x)$ and $K_2(x)$ are defined via \eqref{eq:Kernel1Ansatz}-\eqref{eq:Kernel1Coefficients} and \eqref{eq:Kernel2Ansatz}-\eqref{eq:Kernel2Coefficients}, respectively. Then there exist positive constants $c_{\delta,\nu}$ and $C_{\delta,\nu}$ depending only on $\delta$ and $\nu$ such that $\frac{\mu c_{\delta,\nu}}{|x|^3} \leq K(x) \leq \frac{\mu C_{\delta,\nu}}{|x|^3}$.
\end{proposition}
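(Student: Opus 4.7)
The plan is to exploit the homogeneity of $K$ to reduce the two-sided estimate to strict pointwise positivity of a smooth function on the unit circle, and then verify that the three bounds in \eqref{eq:Perp:u1Dependence:KernelPosRegion} are exactly those under which this positivity holds.

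By property (i) in \Cref{lma:Aniso:Kernel1Properties} and \Cref{lma:Aniso:Kernel2Properties}, both $K_1$ and $K_2$ are even, $(-3)$-homogeneous, and smooth on $\bbR^2 \setminus \{0\}$, so $\Phi(x) := |x|^3 K(x)/\mu$ descends to a smooth function on $S^1$. Once I show $\Phi > 0$ pointwise, compactness of $S^1$ gives constants $0 < c_{\delta,\nu} \le \Phi \le C_{\delta,\nu} < \infty$ depending only on $(\nu,\delta)$. Parametrizing the circle by $(x_1,x_3) = (\cos\theta,\sin\theta)$ and setting $s := \sin^2\theta \in [0,1]$, the formulas \eqref{eq:Kernel1Ansatz}--\eqref{eq:Kernel2Coefficients} yield
\begin{equation*}
    \tfrac{1}{2\delta}\Phi(\theta) = \sqrt{\delta}\,\frac{\widetilde{N}_1(s)}{(1+(\delta-1)s)^{3/2}\,Q(s)^3} + \nu\,\frac{\widetilde{N}_2(s)}{Q(s)^3},
\end{equation*}
where $\widetilde{N}_i(s)$ is the sextic obtained by substituting $(x_1^2,x_3^2) = (1-s,s)$ into the numerators of $K_i$, and $Q(s) := (1-s)^2 + b(1-s)s + cs^2$. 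Because $c = \delta(1-\nu^2) > 0$ under the standing assumptions and $b = 1+\delta > 0$, each of the three terms defining $Q$ on $[0,1]$ is nonnegative and they cannot vanish simultaneously, so $Q(s) > 0$ on $[0,1]$; similarly $1+(\delta-1)s > 0$. Hence $\Phi > 0$ on $S^1$ is equivalent to strict positivity of
\begin{equation*}
    \Theta(s) := \sqrt{\delta}\,\widetilde{N}_1(s) + \nu (1+(\delta-1)s)^{3/2}\,\widetilde{N}_2(s),\qquad s\in[0,1].
\end{equation*}

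At the endpoints the square-root factor trivializes. A direct substitution at $s = 0$ (direction $x_3 = 0$) using \eqref{eq:Kernel1Coefficients}--\eqref{eq:Kernel2Coefficients} and the identities $b = 1 + \delta$, $p = \delta(2(1-\nu)-\delta(1-2\nu))$ gives
$\Theta(0) = \sqrt{\delta}(3-2\delta) + 2\nu(2\delta^{3/2} - 2\delta^{1/2} + 1)$,
so $\Theta(0) > 0$ is precisely the lower bound $\nu > \tfrac{\sqrt{\delta}}{2} \cdot \tfrac{2\delta-3}{2\delta^{3/2}-2\delta^{1/2}+1}$. An analogous substitution at $s = 1$ (direction $x_1 = 0$) reduces $\Theta(1) > 0$ to the upper bound $\nu < \tfrac{2}{4-\delta+\sqrt{\delta^2+8}}$, which can be verified by factoring $\Theta(1)$ as a multiple of the quadratic $\nu^2(4-\delta) - 4\nu + 1$ in $\nu$ (whose roots are exactly $\tfrac{2}{4-\delta\pm\sqrt{\delta^2+8}}$) and using the sanity check that at $\delta = 1$ the bound evaluates to $\nu = 1/3$. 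The remaining condition $\nu > 1 - 2/\delta$ in \eqref{eq:Perp:u1Dependence:KernelPosRegion} is the ellipticity constraint \eqref{eq:Perp:UniformEllipticity:NuAndDelta} itself. For interior $s \in (0,1)$ I would isolate the square-root term and square to obtain a polynomial inequality $\delta\,\widetilde{N}_1(s)^2 \ne \nu^2 (1+(\delta-1)s)^3 \widetilde{N}_2(s)^2$ of controlled degree in $s$ whose coefficients are polynomial in $(\nu,\delta)$, and rule out roots in $(0,1)$ via a Sturm-sequence or explicit-factorization argument; continuity of $\Theta$ and the already-proven positivity at the endpoints then pin down the sign of $\Theta$ throughout.

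The main obstacle is this interior nonvanishing step. The squared polynomial has degree as large as $15$ in $s$ with coefficients depending intricately on $(\nu,\delta)$, so a clean closed-form Sturm analysis is not immediately transparent; in practice I would fix $\delta$, view the inequality as a polynomial condition in $\nu$ alone, and verify (likely with computer-algebra assistance) that the sign is preserved throughout the region \eqref{eq:Perp:u1Dependence:KernelPosRegion}. Once $\Theta > 0$ on $[0,1]$ has been established, the compactness argument from the first paragraph completes the proof, supplying constants $c_{\delta,\nu}, C_{\delta,\nu}$ depending only on $(\nu,\delta)$ with $\mu c_{\delta,\nu}/|x|^3 \le K(x) \le \mu C_{\delta,\nu}/|x|^3$.
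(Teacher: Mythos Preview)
Your approach is essentially the same as the paper's: reduce to the unit circle by $(-3)$-homogeneity, then check positivity at the two axial directions $\theta=0$ and $\theta=\pi/2$ to obtain exactly the bounds in \eqref{eq:Perp:u1Dependence:KernelPosRegion}, and finally address the interior $\theta\in(0,\pi/2)$ separately. Your endpoint computations agree with the paper's (after clearing common factors your $\Theta(0)$ and $\Theta(1)$ reproduce the paper's $K(1,0)$ and $K(0,1)$).

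Where you and the paper part ways is only in how the interior step is handled, and here you are in fact more forthright than the paper. The paper does not carry out a Sturm-type argument; it asserts---on the basis of plots of $\theta\mapsto K(\cos\theta,\sin\theta)$ and a numerical comparison of the region boundary (\Cref{fig:PosReg}) with the curves in \eqref{eq:Perp:u1Dependence:KernelPosRegion}---that the minimum on the circle is always attained at one of the two axial points, so that positivity at $\theta=0,\pi/2$ suffices. In other words, the paper treats the interior nonvanishing as a numerically verified claim rather than a symbolically proven one. Your proposed route (square out the radical and run a Sturm/resultant argument, possibly computer-assisted) would, if carried through, yield a strictly more rigorous proof than the paper's, but it is not what the paper does.
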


\begin{proof}
Since $K(\rho x) = \rho^{-3} K(x)$ for any $\rho > 0$, we need only be concerned with the minimum values of $K$ on $|x|=1$, i.e. the minimum values of $K(\cos\theta,\sin\theta)$ for $\theta \in [0,\pi)$.

Based on the plots of the exact expression for $K$ for varying $\nu$ and $\delta$, the most likely candidates for $\arg \min_{\theta \in [0,\pi)} K(\cos\theta,\sin\theta)$ are $\theta =0 $ and $\theta = \pi/2$. 
Therefore, the values of $(\nu,\delta)$ for which $K(x) > 0$ is at most those for which both $K(1,0) > 0$ and $K(0,1) > 0$.
Plugging these two values in gives
\begin{equation*}
	K(1,0) = \delta ^{3/2} (3-4 \nu )+\delta ^{5/2} (4 \nu -2)+2 \delta  \nu\,, \qquad K(0,1) = \frac{\delta  (\nu  (-2 \delta  \nu +\delta +2 \nu -4)+1)}{(\nu -1)^2}\,.
\end{equation*}
Both of these quantities are positive exactly when $\nu$ and $\delta$ satisfy \eqref{eq:Perp:u1Dependence:KernelPosRegion}.
Because of the explicit formula for $K$, we claim that \eqref{eq:Perp:u1Dependence:KernelPosRegion} precisely describes the range of coefficients for which $K$ is positive. It can be checked that the boundary of \Cref{fig:PosReg} coincides numerically with the boundary of the set $\{ (\nu,\delta) \, : \, \eqref{eq:Perp:u1Dependence:KernelPosRegion} \text{ is satisfied} \}$.
\end{proof}

\begin{figure}[h]
	\centering
		\centering
		\includegraphics[width=.6\textwidth]{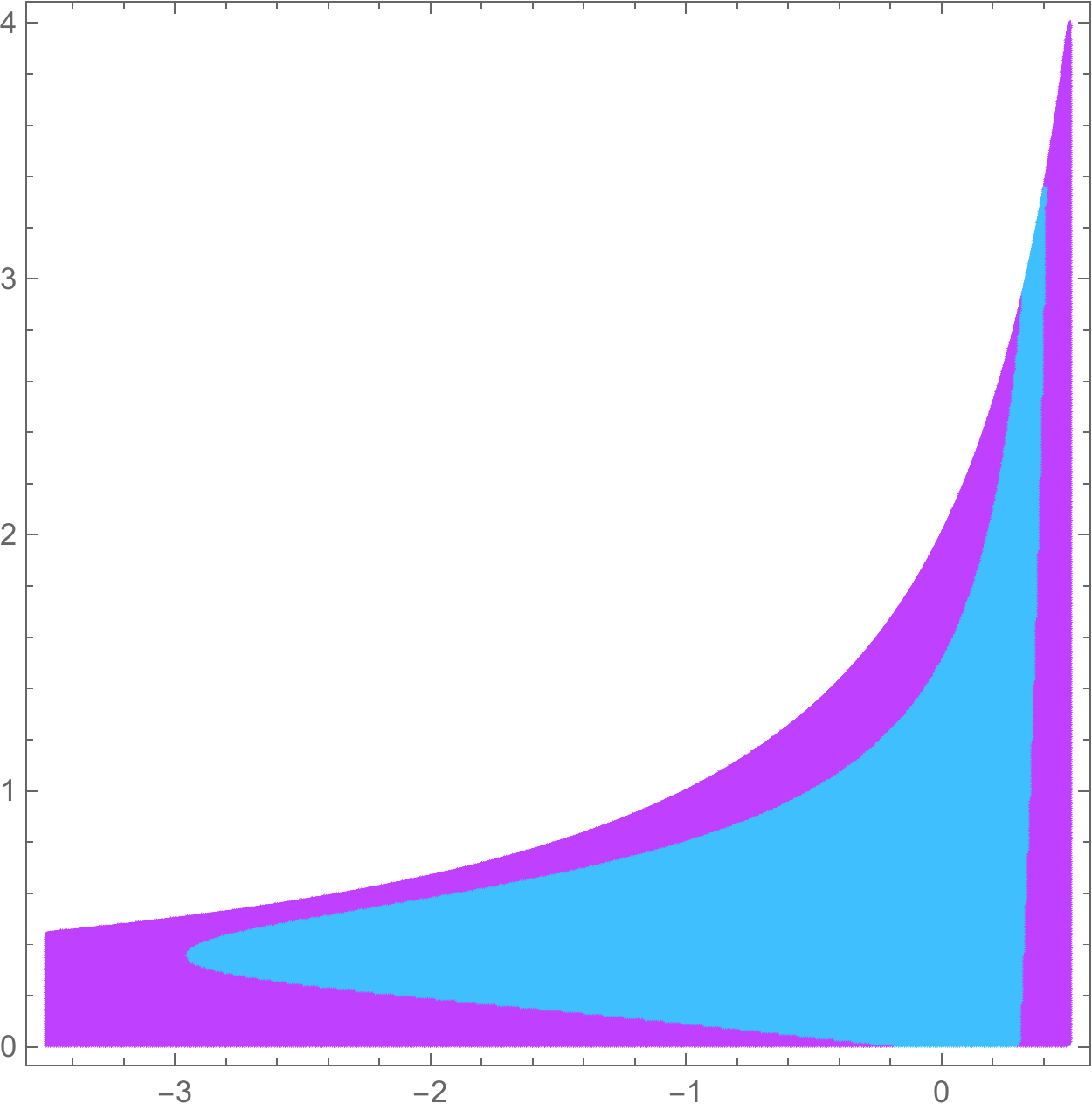}
		\caption{Plot of the numerical minima of $K(\cos\theta,\sin\theta)$ in the $(\nu,\delta)$-plane. The outer region consists of $(\nu,\delta)$ that satisfy \eqref{eq:Perp:UniformEllipticity:NuAndDelta}. The inner region consists of the $(\nu,\delta)$ for which the numerical minimum of $K(\cos\theta,\sin\theta)$ is positive.
		}
		\label{fig:PosReg}
\end{figure}

\begin{remark}
	When $\delta = 1$ and $K$ is given by \eqref{eq:Kernel:FullIsotropic}-\eqref{eq:Coefficients:FullIsotropic}, the first derivative test applied to $\theta \mapsto K(\cos\theta,\sin\theta)$ reveals that
	\begin{equation*}
		K(z) > 0 \text{ if and only if } \frac{2}{3} < q < \frac{3}{2}\,,
	\end{equation*}
	i.e. if and only if $\nu \in ( -\frac{1}{2}, \frac{1}{3} )$. This is the same range for $\nu$ found in \cite{dong2021existence} via variation of parameters.
\end{remark}

The energy corresponding to \eqref{eq:nonlocalMain1d} is
\begin{equation}\label{eq:nonlocalMain1d:Energy}
	E_{\Gamma}(u_1) := \frac{1}{2} \int_{\bbR^2} (\cL u_1) u_1 \, \rmd x + \int_{\bbR^2} W(u_1) \, \rmd x\,,
\end{equation}
with $E_{\Gamma}(u_1;B_R)$ defining the localized version. The goal now is to show existence, uniqueness, and properties of solutions to \eqref{eq:nonlocalMain1d}. Thanks to the properties of $K$, solutions to \eqref{eq:nonlocalMain1d} have a 1D profile, which we will prove in \Cref{sec:1DProfile}.

\section{The 1D reduced equation in the perpendicular case: $\p_{u_1} W = 0$}\label{sec:Perp:u3Eqn}
Assuming that $W(u_1,u_3) = W(u_3)$, we now derive analogously to \Cref{sec:Perp:u1Eqn} the reduced scalar equation for case II and then characterize the explicit form of the nonlocal kernel in Proposition \ref{prop:AlmostIso:1DOperator:IntegralForm}.
Since the material is anisotropic, making the assumption that the misfit potential depends only on $\hat{u}_3$ gives us a different reduced equation. 
If one makes this assumption, then proceeding the same way as we did in \Cref{sec:Perp:u1Eqn} we get an equation for $\hat{u}_3$:
\begin{equation}\label{eq:OneDimEqnForU3}
	\cF(\cL u_3)(k_1,k_3) := \frac{\det \bbA(k)}{a_{11}(k)} \hat{u}_3(k) = - \cF (W'(u_3))\,.
\end{equation}
Inverting the Fourier transform, we obtain the 1D nonlocal equation along with \eqref{eq:FarFieldCond:u3}:
\begin{equation}\label{eq:nonlocalMain1d:u3Eqn}
	\begin{gathered}
		\cL u_3 (x_1,x_3) = - W'(u_3(x_1,x_3))\,, \quad (x_1,x_3) \in \Gamma\,, \\
		\lim\limits_{x_3 \to \pm \infty} u_3(x_1,x_3) = \pm 1\,, \quad x_1 \in \bbR\,,
	\end{gathered}
\end{equation}
where
\begin{equation}\label{eq:AlmostIsotropic1DSymbol}
	\cF(\cL u_3)(k)  = \frac{2 \mu r_2 \big( \delta (2(1-\nu)-\delta(1-2\nu)) k_1^2 + k_3^2 \big) }{\delta (2(1-\nu)-\delta(1-2\nu)) k_1^2 + (1-\nu) k_3^2} \hat{u}_3(k)\,.
\end{equation}
This equation is similar to the 1D reduced equation in the case of isotropic elasticity. However, neither the numerator nor denominator are Laplacian operators.
Recalling the definition of $p$ and $q$ from \eqref{eq:Defns:pAndQ},
we can rewrite \eqref{eq:AlmostIsotropic1DSymbol} as
\begin{equation}\label{eq:OperatorSymbol:perp:u3Dep}
	\cF(\cL u_3)(k) =  \frac{2 \mu r_2 \big( p k_1^2 + k_3^2 \big) }{p k_1^2 + q k_3^2} \hat{u}_3(k)\,.
\end{equation}
Note that from definition in \eqref{eq:Defns:pAndQ}, $0 < p \leq 4$ and $1/2 < q < \infty$ for any $\nu$ and $\delta$ satisfying \eqref{eq:Perp:UniformEllipticity:NuAndDelta}, so we have
\begin{equation*}
	C(\mu,p,q)^{-1} r_2(k) \leq \cF(\cL u_3)(k) \leq C(\mu,p,q) r_2(k)  \text{ for all } k \in \bbR^2\,.
\end{equation*}
We use the notational conventions from \Cref{sec:Perp:u1Eqn} (e.g. $\cL$ has nonlocal kernel $K$, etc.) but the explicit definitions (i.e. the expressions for $\cL$ and $K$) are distinct from those in \Cref{sec:Perp:u1Eqn} unless indicated otherwise.

\begin{proposition}\label{prop:AlmostIso:1DOperator:IntegralForm}
	The integral form of $\cL$ defined in \eqref{eq:OneDimEqnForU3} is
	\begin{equation*}
		\cL w = - \frac{1}{4 \pi} \int_{\bbR^2} \big( w(x-y)-w(x+y) - 2 w(x) \big) K(y) \, \rmd y\,,
	\end{equation*}
	where $K$
	is the unique solution in $C^{\infty}(\bbR^2 \setminus \{0\})$ to the PDE
	\begin{equation}\label{eq:AlmostIso:KernelEqn}
		(p \p_1^2 +  q \p_3^2) K(z) = 2 \mu (p \p_1^2 + \p_3^2) \left[ \frac{1}{|z|^3} \right]\,.
	\end{equation}
\end{proposition}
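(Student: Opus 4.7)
The plan is to mirror the argument of Proposition~\ref{prop:Aniso:1DOperator:IntegralForm}, which here is considerably simpler because the symbol in \eqref{eq:OperatorSymbol:perp:u3Dep} already factorizes as a single half-Laplacian composed with the ratio of two second-order local operators, rather than as a sum of two such compositions. Writing $\wt m(k) = r_2(k)\,m_n(k)/m_d(k)$ with $m_n(k) = 2\mu(pk_1^2 + k_3^2)$ and $m_d(k) = pk_1^2 + qk_3^2$, I identify $\cL = (-\Delta)^{1/2} \circ L_n \circ L_d^{-1}$ formally, where $L_n$ and $L_d$ are the local second-order differential operators with Fourier symbols $m_n$ and $m_d$, respectively. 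Note that $L_d$ is uniformly elliptic under the parameter range \eqref{eq:Perp:UniformEllipticity:NuAndDelta} since $p,q > 0$ there.

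For $w \in \scS(\bbR^2)$ with $\hat w$ vanishing near the origin (a reduction justified by the same cutoff-and-dominated-convergence device used in the proof of Proposition~\ref{prop:Aniso:1DOperator:IntegralForm}), I define $v \in \scS(\bbR^2)$ by $\hat v = \hat w / m_d$, so that $L_d v = w$. Lemma~\ref{thm:AnisoLaplaceRep} with $\rho = 1$ applied to $L_n v$ gives
$$\cL w(x) = (-\Delta)^{1/2}(L_n v)(x) = -\frac{1}{4\pi}\int_{\bbR^2}\frac{(L_n)_x\bigl[v(x-y)+v(x+y)-2v(x)\bigr]}{|y|^3}\,dy.$$
I then transfer $(L_n)_x$ to $(L_n)_y$ via $\p_{x_i} v(x\pm y) = \pm\p_{y_i} v(x\pm y)$, subtract a Taylor polynomial of $v$ of sufficient order at $x$ (annihilated by the even-order operator $(L_n)_y$ in the P.V.\ sense), and integrate by parts in $y$ with vanishing boundary terms. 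The defining PDE \eqref{eq:AlmostIso:KernelEqn} then lets me substitute $(L_n)_y[|y|^{-3}] = (L_d)_y K(y)$, after which a second integration by parts returns $(L_d)_y$ onto the $v$-difference; using $L_d v = w$ yields the stated integral representation.

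For existence and uniqueness of $K$, I propose a $(-3)$-homogeneous ansatz analogous to \eqref{eq:Kernel2Ansatz}, namely $K(z) = P(z)/[\,|z|^3\,(pz_1^2+qz_3^2)^N\,]$ with $P$ an even polynomial of total degree matching the denominator. Substituting into \eqref{eq:AlmostIso:KernelEqn} and clearing denominators reduces the problem to a finite linear system for the coefficients of $P$, which is solvable for a suitable choice of $N$. Uniqueness follows as in Lemma~\ref{lma:Aniso:Kernel1Properties}: any smooth $(-3)$-homogeneous solution has the form $K(r,\theta) = r^{-3}F(\theta)$, which reduces \eqref{eq:AlmostIso:KernelEqn} to a second-order linear ODE for $F$ on $[0,\pi)$ with $\pi$-periodic boundary data, for which the smooth solution is unique. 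The main technical obstacle, inherited from Proposition~\ref{prop:Aniso:1DOperator:IntegralForm}, is the careful justification of the two integrations by parts, namely absolute integrability of the Taylor-subtracted integrand near $y=0$ and at infinity, and vanishing of the boundary terms in the P.V.\ limit.
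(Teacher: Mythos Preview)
Your proposal is correct and follows essentially the same approach as the paper: factor the symbol as $r_2(k)\,m_n(k)/m_d(k)$, write $\cL = (-\Delta)^{1/2}\circ L_n\circ L_d^{-1}$, apply Lemma~\ref{thm:AnisoLaplaceRep} to $L_n v$ with $v = L_d^{-1}w$, and perform two integrations by parts using the defining PDE for $K$; your existence/uniqueness sketch (homogeneous ansatz plus reduction to a periodic second-order ODE in $\theta$) is likewise the method the paper uses, though it is deferred there to Proposition~\ref{lma:AlmostIso:KernelProperties}. One small imprecision: the Taylor polynomial is not ``annihilated'' by $(L_n)_y$---its role is to render the integrand absolutely integrable near $y=0$, and after the second integration by parts $(L_d)_y$ acting on the order-$2$ Taylor remainder is exactly what produces the $-2(L_d)_x v(x) = -2w(x)$ term in the final formula.
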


\begin{proof}
	Define $L_n$ and $L_d$ to be the operators whose symbols are $m_n(k) := p k_1^2 + k_3^2$ and $m_d(k) := p k_1^2 + q k_3^2$ respectively. Let $w \in \scS(\bbR^2)$. By using a cutoff function near the origin and using the dominated convergence theorem, we may assume that $\hat{w}$ vanishes near the origin. Define $v \in \scS(\bbR^2)$ by $L_d v = w$, so that $\hat{v} = \frac{\hat{w}}{m_d(k)}$. Therefore we have
	\begin{equation*}
		\begin{split}
			\cL w &= 2 \mu \left( (-\Delta)^{1/2} L_{n} L_d^{-1} w \right) \\
			&=2 \mu \left( (-\Delta)^{1/2} L_{n} v \right).\,
		\end{split}
	\end{equation*}
	By the representation formula for $(-\Delta)^{1/2}$ in Theorem \ref{thm:AnisoLaplaceRep}
	\begin{equation*}
		\begin{split}
			\cL w(x) &=  -\frac{ 2\mu }{4 \pi} \intdm{\bbR^2}{ \frac{   (L_{n})_x  v(x-y)+ (L_{n})_x  v(x+y)- 2 (L_{n})_x  v(x) }{ ( y_1^2 + y_3^2 )^{3/2} } }{y} \\
			&= -\frac{2\mu}{4 \pi} \intdm{\bbR^2}{ \frac{   (L_{n})_y \left(   v(x-y)+ v(x+y) - \big( \sum_{|a| \leq 2} \frac{D^{a}v(x)}{a !} ( y^{a} + (-y)^{a} )  \big) \right) }{ (y_1^2 + y_3^2 )^{3/2} } }{y}\,.
		\end{split}
	\end{equation*}
	Here we are using multi-index notation. $a = (a_1,a_3) \in \bbN^2_0$, $|a| = a_1 + a_3$, $a! = a_1 ! a_3!$, $y^a = y_1^{a_1} y_3^{a_3}$, and $D^a v(x) = \frac{\p^{|a|}}{\p_{x_1}^{a_1} \p_{x_3}^{a_3}} v(x)$.
	Now integrating by parts (we can use integration by parts here without difficulty by using the P.V. definition of the integral operator and observe that the boundary term $\to 0$ as the P.V. limit is taken),
	\begin{equation*}
		\begin{split}
			&\cL w(x) \\
			&= -\frac{1}{4 \pi} \intdm{\bbR^2}{ \left(   v(x-y)+ v(x+y) -  \sum_{|a| \leq 2} \frac{D^{a}v(x)}{a !} ( y^{a} + (-y)^{a} )  \right)  (L_{n})_y  \left( \frac{ 2\mu }{ ( y_1^2 + y_3^2 )^{3/2} } \right) }{y}\,.
		\end{split}
	\end{equation*}
	Now, since $K(y)$ satisfies
	\begin{equation}\label{eq:AlmostIso:1DEqnPf1}
		L_d K(y) = L_{n}  \left( \frac{ 2\mu }{ |y|^{3} } \right)
	\end{equation}
	integrating by parts again gives
	\begin{equation*}
		\begin{split}
			\cL_1 w(x) &= -\frac{1}{4 \pi } \intdm{\bbR^2}{ \left(   v(x-y)+ v(x+y) - \big( \sum_{|a| \leq 2} \frac{D^{a}v(x)}{a !} ( y^{a} + (-y)^{a} )  \big) \right)  (L_{d})_y  K(y) }{y} \\
			&= -\frac{1}{4 \pi } \intdm{\bbR^2}{ (L_d)_y \left(   v(x-y)+ v(x+y) - \big( \sum_{|a| \leq 2} \frac{D^{a}v(x)}{a !} ( y^{a} + (-y)^{a} )  \big) \right)  K(y) }{y} \\
			&= -\frac{1}{4 \pi } \intdm{\bbR^2}{ \left(   (L_d)_x v(x-y) + (L_d)_x v(x+y) - 2 (L_d)_x v(x) \right)  K(y) }{y} \\
			&= -\frac{1}{4 \pi } \intdm{\bbR^2}{ \left(   w(x-y) + w(x+y) - 2 w(x) \right)  K(y) }{y}\,.
		\end{split}
	\end{equation*}
\end{proof}

\begin{proposition}[Homogeneity and positivity of kernel for case II]\label{lma:AlmostIso:KernelProperties}
	Let $\delta$ and $\nu$ satisfy \eqref{eq:Perp:UniformEllipticity:NuAndDelta}. Define $p$ and $q$ as in \eqref{eq:Defns:pAndQ}.
	Let the kernel $K : \bbR^2 \setminus \{0\} \to \bbR$ be defined as 
	\begin{equation}\label{eq:AlmostIso:KernelDefn1}
		K(z) := 2 \mu \frac{A z_1^6 + Bz_1^4 z_3^2 + C z_1^2 z_3^4 + D z_3^6}{|z|^3 ( q z_1^2 + p z_3^2)^{3}}
	\end{equation}
	with
	\begin{equation}\label{eq:AlmostIso:KernelDefn2}
		\begin{gathered}
			A= 2 p q^2-2 p q+q^2, \quad B= -6 p^2 q+6 p^2+9 p q^2-6 p q,\\ 
			C= -6 p^2 q+9 p^2+6 p q^2-6 p q,\quad D = p^3-2 p^2 q+2 p^2\,.
		\end{gathered}
	\end{equation}
	Then $K$ satisfies the following:
	\begin{enumerate}
		\item[i)] $K(x) = K(-x)$, $K(\rho x) = \rho^{-3} K(x)$ for $\rho > 0$.
		\item[ii)] $|x^{a} D^a K(x)| < \frac{C}{|x|^3}$ for any multi-index $a \in \bbN_0^2$ and any $x \neq 0$.
		\item[iii)] $K$ is the unique solution in $C^{\infty}(\bbR^2 \setminus \{0\})$ to the equation \eqref{eq:AlmostIso:KernelEqn} that is homogeneous of order $-3$.
		\item[iv)] Suppose that $\nu$ and $\delta$ satisfy \eqref{eq:Perp:u3Dependence:KernelPosRegion}. Then for each $(\nu,\delta) \in \cU^+$ there exist positive constants $c_{\delta,\nu}$ and $C_{\delta,\nu}$ depending only on $\delta$ and $\nu$ such that $\frac{\mu c_{\delta,\nu}}{|x|^3} \leq K(x) \leq \frac{\mu C_{\delta,\nu}}{|x|^3}$.
	\end{enumerate}
\end{proposition}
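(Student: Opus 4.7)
The plan is to prove the four properties (i)--(iv) separately, with (iv) requiring substantially more work than the others.

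\textbf{Properties (i) and (ii)} follow directly from inspection of the explicit formula \eqref{eq:AlmostIso:KernelDefn1}. The numerator is a homogeneous polynomial of total degree $6$, while the denominator $|z|^3 (q z_1^2 + p z_3^2)^3$ is homogeneous of degree $9$, giving $K(\rho z) = \rho^{-3} K(z)$. Evenness is immediate since every monomial has even degree in each variable. For (ii), the conditions $p, q > 0$ imply $q z_1^2 + p z_3^2 \geq \min(p,q) |z|^2$, so the denominator is bounded below by $c(p,q) |z|^9$ on $\bbR^2 \setminus \{0\}$; the derivative bound then follows from standard estimates for homogeneous rational functions.

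\textbf{For (iii),} I would proceed in parallel with the proof of \Cref{lma:Aniso:Kernel1Properties}. Postulating \eqref{eq:AlmostIso:KernelDefn1} as an ansatz with unknown coefficients $A, B, C, D$, I would apply $L_d = p \partial_1^2 + q \partial_3^2$ to it and expand both sides of \eqref{eq:AlmostIso:KernelEqn} over a common denominator. Matching coefficients of like monomials $z_1^i z_3^j$ yields a linear system whose unique solution is \eqref{eq:AlmostIso:KernelDefn2}. Uniqueness within the class of homogeneous $C^\infty$ functions of order $-3$ follows by converting to polar coordinates: writing $K(r,\theta) = r^{-3} k(\theta)$ reduces \eqref{eq:AlmostIso:KernelEqn} to a second-order linear ODE in $\theta$ whose leading coefficient $q \cos^2\theta + p \sin^2\theta$ is strictly positive, and $\pi$-periodic boundary conditions on $k$ and $k'$ then force uniqueness.

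\textbf{For (iv),} by homogeneity and positivity of the denominator, $K > 0$ on $\bbR^2 \setminus \{0\}$ if and only if $P(z_1,z_3) := A z_1^6 + B z_1^4 z_3^2 + C z_1^2 z_3^4 + D z_3^6$ is positive on the unit circle. Parameterizing via $t := z_3^2 \in [0,1]$ with $z_1^2 = 1 - t$ reduces this to the positivity of a cubic polynomial $Q(t)$ on $[0,1]$, which is equivalent to $Q(0) = A > 0$, $Q(1) = D > 0$, and $Q$ having no root in $(0,1)$. A direct computation gives $D = p^2 (p + 2 \nu)$, so $D > 0$ translates to $\nu > \delta(\delta-2)/(2(\delta^2 - \delta + 1))$; and $A = q((1-\nu) - 2 p \nu)$ leads to a quadratic inequality in $\nu$ whose solution is bounded by the curve $\nu = 2/(1 + 4\delta - 2\delta^2 + \sqrt{1 - 8\delta + 28\delta^2 - 16\delta^3 + 4\delta^4})$. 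The remaining boundary $p = \tilde{r}(q)$ should arise from the vanishing of the discriminant of $Q$, which encodes the appearance of a double root in $(0,1)$; computing this discriminant with respect to $t$ is expected to yield exactly the quartic polynomial in $x$ stated in \Cref{prop:Perp:u3Dependence}. Once $Q$ is strictly positive on $[0,1]$, compactness of $\{|x| = 1\}$ together with homogeneity produces the two-sided bound $c_{\delta,\nu}/|x|^3 \leq K(x) \leq C_{\delta,\nu}/|x|^3$.

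\textbf{The main obstacle} will be the discriminant analysis in (iv): verifying that the specific quartic given in the statement is precisely the resultant characterizing a double root of $Q$ in $(0,1)$, and selecting $\tilde{r}(q)$ as its relevant smallest positive root rather than a spurious root corresponding to a double zero outside $[0,1]$. The three remaining boundary curves cutting out $\cU^+$ then combine the ellipticity bounds $\nu < 1/2$ and $\nu > 1 - 2/\delta$ with the two positivity conditions $A > 0$ and $D > 0$ derived above.
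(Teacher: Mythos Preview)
Your treatment of (i)--(iii) matches the paper's. For (iv) you and the paper agree on the overall strategy---reduce to positivity on the unit circle and check endpoints plus interior extrema---but parametrize differently: you work with the numerator cubic $Q(t)$ in $t=z_3^2$, whereas the paper keeps the full ratio $\overline{K}(\theta)=K(\cos\theta,\sin\theta)$, locates its interior critical points $\theta=\arccos(\zeta^{\pm})$ in closed form (the $\zeta^{\pm}$ involve $\sqrt{p^2+q^2}$), evaluates $\overline{K}$ there, and then rationalizes the resulting $\sqrt{p^2+q^2}$ to produce the stated quartic in $p$. Your endpoint analysis is correct: $D=p^2(p+2\nu)>0$ indeed yields the lower boundary $\nu>\tfrac{\delta(\delta-2)}{2(\delta^2-\delta+1)}$ and $A=q(1-\nu-2p\nu)>0$ the upper one.

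The one genuine gap is your expectation that the quartic is the discriminant of $Q$. A degree count shows this cannot be literal: with $A,B,C,D$ of $p$-degree $1,2,2,3$ respectively, the coefficients $a=D-C+B-A,\;b,\;c,\;d$ of the cubic $Q(t)=at^3+bt^2+ct+d$ have $p$-degrees $3,2,2,1$, and the cubic discriminant $18abcd-4b^3d+b^2c^2-4ac^3-27a^2d^2$ is then of degree up to $9$ in $p$, not $4$. The vanishing locus of $\operatorname{disc}(Q)$ certainly contains the relevant boundary curve (a double root of $Q$ in $(0,1)$ is equivalent to a double zero of $\overline{K}$ in $(0,\pi/2)$), but you would have to extract the correct irreducible factor and discard spurious components. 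The paper avoids this by computing the critical \emph{value} $\overline{K}(\arccos(\zeta^{\pm}))$ directly---note these are critical points of the full quotient $\overline{K}$, not of your $Q$---and multiplying by the conjugate expression; it then argues, partly numerically, that $\tilde r(q)$ is the one root among four that cuts the admissible region. If you stay with your $Q(t)$ route, be prepared for that factorization step, or switch to the paper's explicit critical-value computation.
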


\begin{proof}
	Property i) is obvious, and ii) is easily derived from i). Property iii) can be seen by converting to polar coordinates $(x_1,x_3) = (r\cos\theta,r\sin\theta)$;
	after multiplying by $r^3$ the equation \eqref{eq:AlmostIso:KernelEqn} becomes a second-order ODE with bounded $C^{\infty}$ coefficients in $\theta$
	with $\pi$-periodic boundary conditions on $\theta \mapsto K(\cos\theta,\sin\theta)$.
	Thus there exists a unique twice-differentiable solution, and that the solution must be $K(\cos\theta,\sin\theta)$ follows from property ii) and from the fact that $K(x)$ solves \eqref{eq:AlmostIso:KernelEqn} in $\bbR^2 \setminus \{0\}$.
	
	It remains to prove property iv). Using the previous discussion, it will suffice to examine the critical points of the function $\overline{K}(\theta) := K(\cos\theta,\sin\theta)$ in the interval $[0,\pi/2]$. Using elementary algebra, the roots of the function $\overline{K}'(\theta)$ in the interval $[0,\pi/2]$ are $0$, $\pi/2$, and $\arccos(\zeta^{\pm})$, where
	\begin{equation*}
		\zeta^{\pm} := \sqrt{\frac{2q^2 - pq - p^2 \pm  \sqrt{(p-q)^2 \left(p^2+q^2\right)}}{(p-q)^2}}\,.
	\end{equation*}
	In the range $0 < p \leq 4$ and $\frac{1}{2} < q < \infty$, $\zeta^+ \in [-1,1]$ only for $p \geq \frac{4}{3}q$, and $\zeta^- \in [-1,1]$ is a real number only for $p \leq \frac{3}{4}q$.
	Therefore, the equation $\overline{K}'(\theta) = 0$ has exactly two solutions for $\frac{3}{4} q \leq p \leq \frac{4}{3} q$, and exactly three solutions otherwise. 
	
	Therefore $K(x_1,x_3) >0$ if and only if $\overline{K}(\theta) > 0$ if and only if
	\begin{equation*}
		\overline{K}(0) >0\,, \quad \overline{K}(\pi/2) > 0\,, \quad \text{ and } \quad \overline{K}(\arccos(\zeta^\pm)) > 0\,.
	\end{equation*}
	Substituting directly gives the relations
	\begin{equation}\label{eq:AlmostIso:Pf1}
		\begin{split}
			\frac{2 (2 p q- 2 p +q)}{q^2} &>0\,, \\
			\frac{2 (p-2 q+2)}{p} &>0\,, \\
			\frac{(q-1) \left(p^3+q^3+\left(p^2+q^2\right) \sqrt{p^2+q^2}\right) + 4 (1-p) p q^2 }{2 p (q-p) q^2} &> 0\text{ when } p \leq \frac{3}{4} q \text{ or }p \geq \frac{4}{3} q\,.
		\end{split}
	\end{equation}
	The region $\cU^+$ is then defined by these three conditions along with the ellipticity conditions \eqref{eq:Perp:UniformEllipticity:NuAndDelta}. It can be seen after elementary algebraic manipulations that the first condition gives $\nu > \frac{\delta(\delta-2)}{2(\delta^2-\delta+1)}$ and the second gives $\nu < \frac{2}{ 1 + 4 \delta - 2 \delta^2 + \sqrt{1 - 8 \delta + 28 \delta^2 - 16 \delta^3 + 4 \delta^4} }$.
	To see that the third condition leads to $p > \wt{r}$, we use a combination of analytic and numeric justification.	
	First, when $p \geq 4q/3$ it is easy to see that the expression $\overline{K}(\arccos(\zeta^\pm))$ is always positive. Second, we note that $\overline{K}(\arccos(\zeta^{\pm})) \leq \min \{ \overline{K}(0), \overline{K}(\pi/2) \}$ in the region $U$ defined by
	$$
	U := \{ (\nu,\delta) \, : \eqref{eq:Perp:UniformEllipticity:NuAndDelta} \text{ is satisfied and } \, 1/2 < q < 1 , p \leq 3q/4 \}\,,
	$$
	and $\overline{K}(\arccos(\zeta^{\pm})) \geq \min \{ \overline{K}(0), \overline{K}(\pi/2) \}$ in the region $\{ (\nu,\delta) \, : \eqref{eq:Perp:UniformEllipticity:NuAndDelta} \text{ is satisfied and } 1 \geq q , p \leq 3q/4 \}$.
	Third, the conjugate
	\begin{equation*}
		\frac{(q-1) \left(p^3+q^3-\left(p^2+q^2\right) \sqrt{p^2+q^2}\right) + 4 (1-p) p q^2 }{2 p (q-p) q^2}
	\end{equation*}
	is positive for all $(p,q) \in U$.
	
	Multiplying the third expression in \eqref{eq:AlmostIso:Pf1} by the conjugate, we obtain a rational expression whose denominator is positive and whose numerator is the polynomial
	\begin{multline*}
		x \mapsto -8  (-1 + q) x^4 + 8 (-1 + q) q^3 \\
		+  q^2 (13 + 14 q - 11 q^2) x + (-11 + 14 q + 13 q^2) x^3 + 
		2 q (1 - 18 q + q^2) x^2 \,.
	\end{multline*}
	The roots of this polynomial can be found exactly (although their analytic expressions are cumbersome to work with) and it can be verified numerically that all four roots are real for all $(p,q) \in U$.
	Putting all these observations together, we see that $\overline{K}(\arccos(\zeta^{\pm})) > 0$ for $(p,q) \in U$ exactly when $p$ is larger than either two or all of the roots of the polynomial.
	The following facts can be verified numerically: For $(p,q) \in U$, one root is always negative. The smallest positive root $\wt{r}$, as a function of $q$, defines a curve $p = \wt{r}$ that splits the region $U$ into two sets (note that $\wt{r} = 0$ for $q = 1$). The other two positive roots are always outside the region $U$.
	In conclusion $\overline{K}(\arccos(\zeta^{\pm})) > 0$ for $(p,q) \in U$ only if $p > \wt{r}$, and so iv) is proved.
\end{proof}

\begin{remark}
	When $\delta = 1$ the formula for $K$ coincides with that of \eqref{eq:Kernel:FullIsotropic}-\eqref{eq:Coefficients:FullIsotropic} for fully isotropic elasticity.
\end{remark}

Analogous to \eqref{eq:nonlocalMain1d:Energy}, the energy corresponding to \eqref{eq:nonlocalMain1d:u3Eqn} is
\begin{equation}\label{eq:nonlocalMain1d:Energy:u3Eqn}
	E_{\Gamma}(u_3) := \frac{1}{2} \int_{\bbR^2} (\cL u_3) u_3 \, \rmd x + \int_{\bbR^2} W(u_3) \, \rmd x\,,
\end{equation}
with $E_{\Gamma}(u_3;B_R)$ defining the localized version. The goal now is to show existence, uniqueness, and properties of solutions to \eqref{eq:nonlocalMain1d:u3Eqn}. Thanks to the properties of $K$, solutions to \eqref{eq:nonlocalMain1d:u3Eqn} have a 1D profile, which we prove in \Cref{sec:1DProfile}.

\section{Slip plane parallel to the plane of isotropy}\label{sec:parallel}

In this section, we study case III; $\Gamma$ is taken parallel to the plane of isotropy, that is, $\Gamma = \{ (x_1,x_2,x_3) \, : \, x_3 = 0 \}$. We first introduce the 3D model and reduce to the 2D nonlocal system, and then in \Cref{subsec:para:2DRed} we derive the scalar reduced equation in this setting. We summarize our third result on the existence and uniqueness of the solution for case III in Proposition \ref{prop:parallel}. We characterize the explicit form and the positivity of the corresponding nonlocal kernel in Section  \ref{subsec:para:2DRed}.

The Dirichlet-to-Neumann map and the subsequent 1D reduced equation turn out to have forms identical to their analogues in the setting of full isotropic elasticity. The analysis of \cite{dong2021existence} then readily applies, with one exception;
the analogue of the Poisson's ratio $\nu$
here is instead a function of the five elastic constants. 
The constraint on the Poisson's ratio in the isotropic case from \cite{dong2021existence} here describes a 
region in $\bbR^5$ for which the integro-differential kernel is positive.
The notation for the full 3D system and the perturbed and localized energies is retained.

\begin{lemma}
	Assume that $\bu \in C^2(\bbR^3 \setminus \Gamma ; \bbR^3)$, where $\Gamma = \{ x_3 = 0 \}$ satisfies 
	\begin{equation}\label{eq:BCs:ParallelCase}
		\begin{split}
			u_1^+(x_1,x_2) &= - u_1^-(x_1,x_2)\,, \\
			u_2^+(x_1,x_2) &= -u_2^-(x_1,x_2)\,, \\
			u_3^+(x_1,x_2) &= u_3^-(x_1,x_2)\,, \\
		\end{split}
	\end{equation}
	and satisfies either
	\begin{equation}\label{eq:FarFieldCond:ParallelCase:u1}
		u_1^+(\pm \infty,x_2) = \pm 1 \quad \text{ for any } x_2 \in \bbR
	\end{equation}
	or
	\begin{equation}\label{eq:FarFieldCond:ParallelCase:u2}
		u_2^+(x_1,\pm \infty) = \pm 1 \quad \text{ for any } x_1 \in \bbR\,.
	\end{equation}
	Suppose additionally that $\bu$ is a local minimizer of the total energy
	\begin{equation*}
		E(\bu) = E_{\mathrm{els}}(\bu) + \int_{\Gamma} W(u_1^+,u_2^+) \, \rmd S
	\end{equation*}
	in the sense of \Cref{def:perp:LocalMin}. Then $\bu$ satisfies the Euler-Lagrange equations
	\begin{equation}\label{eq:ELEqn:ParallelCase}
		\begin{gathered}
			\bbL \bu = 0 \text{ in } \bbR^3 \setminus \Gamma\,, \\
			\sigma_{13}^+ + \sigma_{13}^- = \p_{u_1} W(u_1^+,u_2^+) \text{ on } \Gamma\,, \\
			\sigma_{23}^+ + \sigma_{23}^- = \p_{u_2} W(u_1^+,u_2^+) \text{ on } \Gamma\,, \\
			\sigma_{33}^+ - \sigma_{33}^- = 0 \text{ on } \Gamma\,.
		\end{gathered}
	\end{equation}
\end{lemma}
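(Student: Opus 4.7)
The plan is to adapt the first-variation argument used for the perpendicular case (and for the fully isotropic case in \cite[Appendix A]{dong2021existence}) to the new geometry where $\Gamma = \{x_3 = 0\}$ and the admissible perturbations satisfy the symmetry relations analogous to \eqref{eq:BCs:perturbed}, namely $\varphi_1^+ = -\varphi_1^-$, $\varphi_2^+ = -\varphi_2^-$, and $\varphi_3^+ = \varphi_3^-$. Fix such a $\bsvarphi \in C^\infty_c(\bbR^3 \setminus \Gamma; \bbR^3)$ with support in some $B_R$. Since $\bu$ is a local minimizer, the map $t \mapsto E(\bu + t\bsvarphi; B_R)$ has a minimum at $t = 0$, so its derivative there must vanish.

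First I would compute the variation of the elastic energy. Expanding $\hat{E}_{\mathrm{els}}(t\bsvarphi | \bu)$ and differentiating in $t$ at $t = 0$ yields, using the symmetry $\sigma_{ij} = \sigma_{ji}$ and the constitutive relations in \eqref{eq:StressTensor},
\begin{equation*}
\frac{d}{dt}\bigg|_{t=0} \hat{E}_{\mathrm{els}}(t\bsvarphi | \bu; B_R) = \int_{B_R \setminus \Gamma} (\sigma_{\bu})_{ij} (\veps_{\bsvarphi})_{ij} \, \rmd x = \int_{B_R \setminus \Gamma} (\sigma_{\bu})_{ij} \partial_j \varphi_i \, \rmd x.
\end{equation*}
Integrating by parts on the two open half-spaces $\{x_3 > 0\}$ and $\{x_3 < 0\}$ separately, the interior terms produce $-\int (\partial_j (\sigma_{\bu})_{ij}) \varphi_i \, \rmd x = -\int (\bbL \bu)_i \varphi_i \, \rmd x$, and the boundary terms on $\Gamma$ come with outward normals $-e_3$ from above and $+e_3$ from below, giving the surface integral $\int_\Gamma \bigl[-(\sigma_{\bu})_{i3}^+ \varphi_i^+ + (\sigma_{\bu})_{i3}^- \varphi_i^-\bigr] \, \rmd S$.

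Next, I would apply the symmetry relations for $\bsvarphi$ to rewrite this boundary integral in terms of $\varphi_i^+$ alone: the $i=1,2$ components pick up a plus sign between $\sigma^+$ and $\sigma^-$ (since $\varphi_{1,2}^- = -\varphi_{1,2}^+$), while the $i=3$ component picks up a minus sign (since $\varphi_3^- = \varphi_3^+$). The variation of the misfit energy contributes $\int_\Gamma (\partial_{u_1} W \, \varphi_1^+ + \partial_{u_2} W \, \varphi_2^+) \, \rmd S$. Combining everything, the stationarity condition reads
\begin{equation*}
\begin{split}
0 = &- \int_{B_R \setminus \Gamma} (\bbL \bu)_i \varphi_i \, \rmd x
- \int_\Gamma \bigl[(\sigma_{13}^+ + \sigma_{13}^-) - \partial_{u_1}W\bigr] \varphi_1^+ \, \rmd S \\
&- \int_\Gamma \bigl[(\sigma_{23}^+ + \sigma_{23}^-) - \partial_{u_2}W\bigr] \varphi_2^+ \, \rmd S
- \int_\Gamma (\sigma_{33}^+ - \sigma_{33}^-) \varphi_3^+ \, \rmd S.
\end{split}
\end{equation*}
Testing against $\bsvarphi$ supported away from $\Gamma$ yields $\bbL \bu = 0$ in $\bbR^3 \setminus \Gamma$; then testing against perturbations that, on $\Gamma$, isolate $\varphi_1^+$, $\varphi_2^+$, and $\varphi_3^+$ one at a time (while preserving the symmetry relations) yields the three remaining identities in \eqref{eq:ELEqn:ParallelCase}.

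The only mild subtlety I anticipate is bookkeeping on the signs and on the fact that the symmetry of $\bsvarphi$ is compatible with the symmetry of $\bu$ so that the perturbed configuration $\bu + t\bsvarphi$ remains admissible in the sense of \Cref{def:perp:LocalMin}; this is automatic from the choice of the parallel-case boundary conditions \eqref{eq:BCs:ParallelCase}. The argument that the three classes of test perturbations on $\Gamma$ separate the three boundary equations is standard and proceeds exactly as in \cite[Appendix A]{dong2021existence}. The far-field conditions \eqref{eq:FarFieldCond:ParallelCase:u1} or \eqref{eq:FarFieldCond:ParallelCase:u2} play no role in the first-variation step because all perturbations are compactly supported.
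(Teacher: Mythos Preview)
Your proposal is correct and follows exactly the approach the paper has in mind: the paper does not give an explicit proof for this parallel-case lemma, and for the analogous perpendicular-case lemma it simply states that the computation is the same as in the fully isotropic setting, referring to \cite[Appendix A]{dong2021existence}. Your detailed first-variation argument with integration by parts on the two half-spaces, followed by the use of the symmetry relations on $\bsvarphi$ to separate the three boundary conditions, is precisely that computation adapted to the geometry $\Gamma = \{x_3 = 0\}$; the sign bookkeeping you carried out is correct.
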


\begin{proposition}[Case III: $\Gamma$ parallel to the plane of isotropy, $\p_{u_1}W = 0$]\label{prop:parallel}
	Suppose that the five elastic constants satisfy \eqref{eq:UniformEllipticityConditions}, and suppose further that $(C_{11},C_{33},C_{13},C_{44},C_{66}) \in \cV^+$, where $\cV^+ \subset \bbR^5$ is the region
	\begin{equation}\label{eq:para:CoeffCond2}
		\cV^+ := \left\{ (C_{11},C_{33},C_{13},C_{44},C_{66}) \, : \, \frac{2}{3} < \frac{\eta_1}{\eta_2} < \frac{3}{2} \right\}\,,
	\end{equation}
	where $\eta_1$ and $\eta_2$ are the real numbers defined as
	\begin{equation}\label{def:Kappa1}
		\eta_1 := 2 \sqrt{C_{44} C_{66}}
	\end{equation}
	and
	\begin{equation}\label{def:Kappa2}
		\eta_2 := \frac{1}{\tau} \Bigg( C_{11} - C_{13} - C_{44} + \frac{C_{13} C_{44}}{C_{33}} + \frac{ \sqrt{C_{11} C_{33}} (C_{33} - C_{13}) (C_{44} + C_{13}) }{C_{33}^2} \Bigg)\,,
	\end{equation}
	and where
	\begin{equation}\label{def:Kappa3}
		\tau := \frac{\sqrt{\sqrt{C_{11} C_{33} } - C_{13}} \sqrt{ \sqrt{C_{11} C_{33}} + C_{13} + 2 C_{44} } }{2 \sqrt{C_{33} C_{44}} }\,.
	\end{equation}
	Assume that $\p_{u_1}W = 0$, i.e. $W(u_1,u_2) = W(u_2)$, and that $W$ satisfies \eqref{potential}. Then the system \eqref{eq:ELEqn:ParallelCase} has a unique 
	(up to translation in the $x_1$-direction and rotation about the $x_3$-axis) 
	classical solution $\bu : \bbR^3 \to \bbR^3$ belonging to $C^{\infty}(\bbR^3 \setminus \Gamma;\bbR^3)$ with $\bu(\cdot,\cdot,x_3) \in H^s(\bbR^2)$ for all $s \geq 1$ and for all fixed $x_3 \neq 0$. Moreover, the solution is a local minimizer of $E$ in the sense of \Cref{def:perp:LocalMin}. The second component $u_2$ is the unique stable solution to \eqref{eq:Nonlocal1DEqn:Parallel} with specific nonlocal kernel defined in \eqref{eq:Iso1:KernelDefn1} and $u_2$ has a 1D profile.
\end{proposition}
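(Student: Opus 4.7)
The overall plan is to follow the same three-step strategy used for Cases I and II: first reduce the 3D system \eqref{eq:ELEqn:ParallelCase} to a 2D strongly-coupled system on $\Gamma$ via a Dirichlet-to-Neumann map, then use the assumption $\p_{u_1}W = 0$ to further reduce to a scalar 1D nonlocal Ginzburg–Landau equation for $u_2$, and finally apply the rigidity theory combined with elastic extension to promote a 1D-profile solution back to the full 3D system. Since the excerpt hints that the reduction in case III yields an operator which \emph{looks isotropic in} $(x_1,x_2)$, this third case should be structurally easier than Cases I and II once the coefficients $\eta_1,\eta_2,\tau$ are identified correctly.

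For the dimension reduction, I would take the Fourier transform of $\bbL\bu = 0$ in the $(x_1,x_2)$ variables (now the tangential variables on $\Gamma$) and obtain a $6\times 6$ first-order ODE system in $x_3$ whose matrix $\frak{A}(k)$ depends on the five elastic constants. Using \eqref{eq:UniformEllipticityConditions} and the fact that the upper/lower half-spaces have bounded $\bu$ at infinity, I select the three decaying eigenmodes to build a matrix $\bbB^{\pm}(k,x_3)$ expressing $\hat{\bu}(k,x_3)$ in terms of $\hat{\bu}^{\pm}(k)$. The rotational symmetry about the $x_3$-axis that transverse isotropy enjoys when $\Gamma = \{x_3=0\}$ should force the characteristic polynomial to depend on $(k_1,k_2)$ only through $|k|_{\mathrm{tan}}^2 = k_1^2+k_2^2$, which is precisely why the resulting 2D operator will be isotropic. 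Imposing the slip-plane boundary conditions \eqref{eq:BCs:ParallelCase} and using $\sigma_{33}^+ - \sigma_{33}^- = 0$ to eliminate $\hat{u}_3^+$ in terms of $\hat{u}_1^+,\hat{u}_2^+$ produces a $2\times 2$ Fourier symbol matrix $\bbA(k)$. Then, using $\p_{u_1}W = 0$, I solve for $\hat{u}_1^+$ in terms of $\hat{u}_2^+$ and substitute back to obtain a scalar Fourier-multiplier equation $\cF(\cL u_2)(k) = -\cF(W'(u_2))$, where the symbol has the form $2\eta_1|k|(\alpha k_1^2 + \beta k_2^2)/(\gamma k_1^2 + \delta k_2^2)$ with coefficients that, after simplification, should be writable in terms of $\eta_1,\eta_2,\tau$ defined in \eqref{def:Kappa1}--\eqref{def:Kappa3}, so the scalar equation \eqref{eq:Nonlocal1DEqn:Parallel} has exactly the form of its isotropic analogue.

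For the kernel analysis, I would proceed as in \Cref{prop:AlmostIso:1DOperator:IntegralForm}: rationalize the symbol, decompose $\cL = (-\Delta)^{1/2}\circ L_n \circ L_d^{-1}$, and use integration by parts (together with \Cref{thm:AnisoLaplaceRep}) to derive the integral form $\cL u_2(x) = -\tfrac{1}{4\pi}\int (u_2(x-y)+u_2(x+y)-2u_2(x))K(y)\,\rmd y$, where $K$ is the homogeneous-degree-$(-3)$ solution of an explicit elliptic PDE. Solving this PDE with a polynomial-over-$|z|^3(\text{quadratic})^{3}$ ansatz as in \Cref{lma:AlmostIso:KernelProperties} gives the closed form \eqref{eq:Iso1:KernelDefn1}. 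Positivity of $K$ reduces, by homogeneity, to positivity of $\overline{K}(\theta) := K(\cos\theta,\sin\theta)$ on $[0,\pi)$. Because the reduced 2D operator is already isotropic in form, $\overline{K}$ should have the same algebraic shape as in the isotropic analysis of \cite{dong2021existence}, and the condition $2/3 < \eta_1/\eta_2 < 3/2$ in \eqref{eq:para:CoeffCond2} is exactly the translation of the isotropic condition $\nu \in (-\tfrac12,\tfrac13)$ into the five-parameter setting through the identifications $\eta_1 \leftrightarrow 2\mu$, $\eta_2 \leftrightarrow 2\mu/(1-\nu)$; verifying this correspondence is the cleanest way to obtain the two-sided bound $c_{\eta_1,\eta_2}/|x|^3 \leq K(x) \leq C_{\eta_1,\eta_2}/|x|^3$.

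With the kernel established, the remainder of the argument is essentially quotation: homogeneity, strict positivity, and the decay estimate on $K$ feed directly into the rigidity result \cite[Theorem 4.6]{dong2021existence} applied to the local BV/sharp-interpolation framework of \cite{CSV19,Gui19,FS20,savin2018rigidity}, giving that any bounded stable solution of \eqref{eq:Nonlocal1DEqn:Parallel} has a 1D profile and is unique up to translation in the $x_1$-direction and rotation about the $x_3$-axis. Smoothness in the $(x_1,x_2)$-plane, $\dot H^s$ regularity of the trace, and the statement that the reconstructed $\bu$ is a local minimizer in the sense of \Cref{def:perp:LocalMin} come from the elastic-extension procedure of \cite{gao2021revisit,gao2021existence} applied to the transversely isotropic bulk system $\bbL\bu = 0$; because the Dirichlet-to-Neumann matrix $\bbA(k)$ is positive-definite by \eqref{eq:UniformEllipticityConditions}, Plancherel gives the coercivity needed to run that extension unchanged. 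The main obstacle, and the only genuinely new technical point, is bookkeeping the reduction: verifying algebraically that the five-parameter symbol obtained from the $6\times 6$ ODE system really collapses to an isotropic-in-$(k_1,k_2)$ scalar symbol parameterized by $(\eta_1,\eta_2,\tau)$, and that \eqref{eq:para:CoeffCond2} is equivalent to strict positivity of $\overline{K}$ on $[0,\pi)$; once this algebraic identification is secured, every remaining ingredient is either a direct corollary of earlier sections or a citation.
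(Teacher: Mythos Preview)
Your proposal is correct and follows essentially the same approach as the paper: reduce to the 2D system via the Dirichlet-to-Neumann map (the paper's Lemma~\ref{lma:FindingGeneral2DSystem:ParallelCase}, where indeed $\bbA(k)=\eta_1|k|\bbI+(\eta_2-\eta_1)|k|\,k\otimes k/|k|^2$ depends only on $|k|$), reduce to the scalar equation \eqref{eq:Nonlocal1DEqn:Parallel} with symbol $\eta_1\eta_2|k|^3/(\eta_2 k_1^2+\eta_1 k_2^2)$, derive the kernel \eqref{eq:Iso1:KernelDefn1} and its positivity under $2/3<\eta_1/\eta_2<3/2$ exactly as in the isotropic case (Propositions~\ref{prop:IntegralForm:Parallel} and~\ref{lma:Iso1:KernelProperties}), and then invoke Theorem~\ref{thm6.3} plus the elastic extension of \cite{gao2021revisit}. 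Your identification of the isotropic correspondence $\eta_1/\eta_2\leftrightarrow 1-\nu$ and your observation that rotational symmetry about $x_3$ forces the characteristic polynomial to depend only on $k_1^2+k_2^2$ are both exactly what the paper uses.
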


Now we reduce the 3D system to a 2D system just as in  \Cref{sec:3DModel:perp}. Here $x = (x_1,x_2) \in \bbR^2$ and $k = (k_1,k_2) \in \bbR^2$.

\begin{lemma}\label{lma:FindingGeneral2DSystem:ParallelCase}
	Suppose that the five elastic constants satisfy \eqref{eq:UniformEllipticityConditions}.
	Suppose that
	$\bu$ satisfies \eqref{eq:ELEqn:ParallelCase} and remains bounded as $|x_3| \to \infty$, and assume that $u_1^{+}$ and $u_2^{+}$ belong to $H^s(\bbR^2)$ for some $s \geq 1/2$. Then $\bu$ can be expressed entirely in terms of $u_1^+$ and $u_2^+$. In particular, $(u_1^+, u_2^+)$ satisfies the nonlocal system
	\begin{equation}\label{eq:2DEqn:Formal:ParallelCase}
		\begin{bmatrix}
			\cF (\sigma_{13}^+(k)+\sigma_{13}^-(k))  \\
			\cF (\sigma_{23}^+(k)+\sigma_{23}^-(k))  \\
		\end{bmatrix}
		:= -\bbA(k) 
		\begin{bmatrix}
			\hat{u}_1^+(k) \\
			\hat{u}_2^+(k) \\
		\end{bmatrix}
		= \cF 
		\begin{bmatrix}
			\p_{u_1} W (u_1^+,u_2^+) \\
			\p_{u_2} W (u_1^+,u_2^+) \\
		\end{bmatrix} \quad \text{ on } \Gamma\,,
	\end{equation}
	where the $2 \times 2$ matrix $\bbA(k)$ is given by
	\begin{equation}\label{eq:2DSystem:ParallelCase}
		\bbA(k) = \eta_1 |k| \bbI + (\eta_2-\eta_1) |k| \frac{k \otimes k}{|k|^2}\,,
	\end{equation}
	and where $\bbI$ denotes the $2 \times 2$ identity matrix and $(k \otimes k)_{ij} = k_i k_j$ for $i,j=1,2$.
\end{lemma}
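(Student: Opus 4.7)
The plan is to mirror the derivation carried out in Lemma \ref{lma:FindingGeneral2DSystem}, accounting for the change in orientation of the slip plane. Since the slip plane is now $\{x_3=0\}$, I first apply the Fourier transform in $x = (x_1,x_2)$ to the bulk equation $\bbL\bu = 0$, using the explicit form of the operator matrix $L_{ij}$ from Section \ref{sec:3DModel:perp}. This produces a $3\times 3$ second-order ODE system in $x_3$ with $k$-dependent constant coefficients, together with the Dirichlet data $\hat\bu(k,0^\pm) = \hat\bu^\pm(k)$ and boundedness of $\hat\bu(k,\cdot)$ at infinity. Recasting this as a $6\times 6$ first-order system, uniform ellipticity \eqref{eq:UniformEllipticityConditions} guarantees that the characteristic polynomial has six real nonzero roots $\pm\lambda_j(k)$, each homogeneous of degree one in $k$; selecting the modes that remain bounded in each half-space and using the interface data then yields explicit representations $\hat\bu(k,x_3) = \bbB^\pm(k,x_3)\,\hat\bu^\pm(k)$ for $\pm x_3 > 0$.

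The jump conditions \eqref{eq:BCs:ParallelCase} give $\hat u_1^- = -\hat u_1^+$, $\hat u_2^- = -\hat u_2^+$, and $\hat u_3^- = \hat u_3^+$, leaving $\hat u_3^+$ as the only free auxiliary unknown. It is eliminated via the remaining interface condition $\sigma_{33}^+-\sigma_{33}^- = 0$, which at the Fourier level is a scalar linear equation expressing $\hat u_3^+$ as a degree-zero homogeneous combination of $\hat u_1^+$ and $\hat u_2^+$. Substituting this relation into the Fourier expressions for $\sigma_{13}^\pm$ and $\sigma_{23}^\pm$ at $x_3 = 0^\pm$ produces a $2 \times 2$ Dirichlet-to-Neumann matrix $\bbA(k)$ acting on $(\hat u_1^+, \hat u_2^+)$ which, by dimensional analysis (all roots $\lambda_j$ are homogeneous of degree one), is itself homogeneous of degree one in $k$.

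The hard part is recognizing that $\bbA(k)$ collapses to the clean expression \eqref{eq:2DSystem:ParallelCase} rather than a complicated superposition of square-root symbols. I would exploit the rotational invariance about the $x_3$-axis of a transversely isotropic medium whose isotropy plane coincides with $\Gamma$: for any planar rotation $R_\theta$, replacing $\bu(x_1,x_2,x_3)$ by $R_\theta^T \bu(R_\theta (x_1,x_2), x_3)$ yields an equivalent solution, which forces the intertwining identity $\bbA(R_\theta k) = R_\theta \bbA(k) R_\theta^T$ for all $\theta$. Combined with the homogeneity of degree one established above, this restricts $\bbA$ to the two-parameter family $\bbA(k) = \alpha|k|\bbI + \beta|k|\,\tfrac{k\otimes k}{|k|^2}$, with $\alpha, \beta$ depending only on the five elastic constants. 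Evaluating at $k = (|k|,0)$ identifies these scalars: the anti-plane (transverse) component gives $\alpha = \eta_1$, while the in-plane (longitudinal) component gives $\alpha+\beta = \eta_2$. The auxiliary quantity $\tau$ in \eqref{def:Kappa3} emerges naturally as one of the normalized decay rates $\lambda_j/|k|$ from the eigenmode decomposition of the reduced ODE. A consistency check against the isotropic specialization \eqref{eq:ConsistentWithIsotropy}, which recovers the known Dirichlet-to-Neumann map from \cite{dong2021existence}, provides a final sanity check on the formula.
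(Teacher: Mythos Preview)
Your approach is essentially correct and in fact more conceptual than the paper's. The paper proceeds by brute force: it writes down the characteristic polynomial of the $6\times 6$ system explicitly, factors it as $(\lambda^2-\delta|k|^2)(\alpha\lambda^4-\gamma|k|^2\lambda^2+\beta|k|^4)$, computes the six roots $\pm\vartheta_i|k|$, and then pushes through the eigenvector algebra case by case until the formula \eqref{eq:2DSystem:ParallelCase} emerges. Your use of the rotational symmetry about the $x_3$-axis to constrain $\bbA(k)$ \emph{a priori} to the two-parameter family $\alpha|k|\bbI+\beta|k|\,k\otimes k/|k|^2$, followed by evaluation at a single $k$ to read off $\alpha=\eta_1$ and $\alpha+\beta=\eta_2$, bypasses most of that algebra and makes transparent why the parallel case is so much cleaner than the perpendicular one.

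One factual slip to fix: you assert that uniform ellipticity forces the six characteristic roots to be real. This is false in general---the paper observes that $\vartheta_2,\vartheta_3$ can be complex (the discriminant $\gamma^2-4\alpha\beta$ is not sign-definite under \eqref{eq:UniformEllipticityConditions}). Fortunately your argument does not actually rely on reality of the roots: homogeneity of degree one in $k$ still holds, and the bounded-mode selection simply becomes ``negative real part'' rather than ``negative.'' You should rephrase that line accordingly, and perhaps remark (as the paper does) that the final expression for $\bbA$ is insensitive to whether the roots are real, complex, or repeated.
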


\begin{proof}
The proof is a straightforward computation using the process outlined in \Cref{lma:FindingGeneral2DSystem}.
The $2 \times 2$ matrix $\bbA$ is determined by the $6 \times 6$ matrix $\frak{A}$.
The characteristic polynomial of $\frak{A}(k)$ is given by
\begin{equation*}
	\wt{p}(\lambda) := \left( \lambda^2 - \delta |k|^2 \right) \left( \alpha \lambda^4 - \gamma |k|^2 \lambda^2 + \beta |k|^4 \right)\,,
\end{equation*}
where the four dimensionless parameters $\alpha$, $\beta$, $\gamma$ and $\delta$ are given by
\begin{equation*}
	\alpha = \frac{C_{33}}{C_{44}}\,, \quad \beta = \frac{C_{11}}{C_{44}}\,, \quad \gamma = 1 + \alpha \beta - \left( \frac{C_{13}}{C_{44}}+1 \right)^2\,, \quad \delta = \frac{C_{66}}{C_{44}}\,.
\end{equation*}
From the uniform ellipticity conditions \eqref{eq:UniformEllipticityConditions}, we see that the constants satisfy at least $\alpha > 0$, $\beta > 0$, $\delta > 0$ and $ - 2 \sqrt{\alpha \beta} < \gamma \leq \alpha \beta + 1$. See also \cite{payton2012elastic} for a detailed discussion on the ellipticity properties of $\bbL$. 
The six roots of $\wt{p}(\lambda)$ are
\begin{equation*}
	\pm r_i(k) = \pm \vartheta_i |k|\,, \qquad i=1,2,3\,,
\end{equation*}
where the (possibly complex) constants $\vartheta_i$ are given by
\begin{equation*}
	\pm \vartheta_1 = \pm \sqrt{\delta}\,, \qquad \pm \vartheta_2 = \pm \sqrt{\frac{\gamma + \sqrt{\gamma^2 - 4 \alpha \beta} }{2 \alpha}}\,, \qquad \pm \vartheta_3 = \pm \sqrt{\frac{\gamma - \sqrt{\gamma^2 - 4 \alpha \beta} }{2 \alpha}}\,.
\end{equation*}
In terms of the original elastic constants, $\vartheta_2 = \tau + \tilde{\tau}$ and $\vartheta_3 = \tau - \tilde{\tau}$, where $\tau$ is defined in \eqref{def:Kappa3} and $\tilde{\tau}$ is defined as
\begin{equation*}
	\tilde{\tau} := \frac{\sqrt{\sqrt{C_{11} C_{33} } + C_{13}} \sqrt{ \sqrt{C_{11} C_{33}} - C_{13} - 2 C_{44} } }{2 \sqrt{C_{33} C_{44}} }\,.
\end{equation*}

The exact form of $\frak{A}$ will vary depending on whether the roots repeat or are complex, but the formula \eqref{eq:2DSystem:ParallelCase} for the reduced matrix $\bbA$ holds for all cases.
\end{proof}

The matrix $\bbA(k)$ is positive definite for all $k \neq 0$ so long as
\begin{equation*}
	\eta_1 > 0 \text{ and } \eta_2 > 0\,.
\end{equation*}
Clearly $\eta_1 > 0$ by the ellipticity conditions on $C_{44}$ and $C_{66}$, but the region in $\bbR^4$ describing the range of $C_{11}$, $C_{13}$, $C_{33}$ and $C_{44}$ for which $\eta_2 > 0$ is more difficult to describe. However, it is a nonempty set; in fact it can be checked that $\eta_2$ is positive for the materials whose elastic constants have been determined in \cite[pg. 3]{payton2012elastic}.
A special case is when $\sqrt{C_{11} C_{33}} - C_{13} - 2 C_{44} = 0$; it is straightforward to check that $\eta_2 > 0$ exactly when
\begin{equation*}
	\begin{split}
		\frac{1}{\alpha} < \beta < 6 + \frac{2}{\alpha} + \alpha + \frac{2 (1+\alpha)}{\alpha} \sqrt{ 1+4 \alpha } &\text{ when } 0 < \alpha \leq 2 + \sqrt{5}\,, \\
		6 + \frac{2}{\alpha} + \alpha - \frac{2 (1+\alpha)}{\alpha} \sqrt{ 1+4 \alpha } < \beta < 6 + \frac{2}{\alpha} + \alpha + \frac{2 (1+\alpha)}{\alpha} \sqrt{ 1+4 \alpha } &\text{ when } 2 + \sqrt{5}  < \alpha\,.
	\end{split}
\end{equation*}

\subsection{The 1D Equation}\label{subsec:para:2DRed}

Clearly the reduced 2D operator is isotropic, and so it will suffice to consider the case
\begin{equation*}
	W(u_1^+,u_2^+) = W(u_2^+)\,;
\end{equation*}
the other case
\begin{equation*}
	W(u_1^+,u_2^+) = W(u_1^+)
\end{equation*}
is equivalent to the first case via a rotation of coordinates.
Similar to the previous sections, we drop the superscripts on $u_1^+$, $u_2^+$ for simplicity. Our reduced nonlocal system reads
\begin{equation}\label{eq:2DEqn:ParallelCase:Case1}
	\left(\eta_1 |k| \bbI + (\eta_2-\eta_1) |k| \frac{k \otimes k}{|k|^2}
	\right) 
	\begin{bmatrix}
		\hat{u}_1(k) \\
		\hat{u}_2(k) \\
	\end{bmatrix}
	= -\cF 
	\begin{bmatrix}
		\p_{u_1} W (u_1,u_2) \\
		\p_{u_2} W (u_1,u_2) \\
	\end{bmatrix} \quad \text{ on } \Gamma\,.
\end{equation}
If we assume additionally that $\p_{u_1} W = 0$, i.e. $W(u_1,u_2) = W(u_2)$, then we can solve for $\hat{u}_1$ in terms of $\hat{u}_2$ and substitute into the remaining equation. We obtain the one-dimensional scalar anisotropic nonlocal equation in Fourier space
\begin{equation}\label{eq:OneDimEqnForU2:ParallelCase}
	\cF(\cL u_2)(k_1,k_2) := \frac{\det \bbA(k)}{a_{11}(k)} \hat{u}_2(k) = - \cF (W'(u_2))\,.
\end{equation}
Inverting the Fourier transform and using \eqref{eq:FarFieldCond:ParallelCase:u1},
we obtain the scalar nonlocal equation
\begin{equation}\label{eq:Nonlocal1DEqn:Parallel}
	\begin{gathered}
		\cL u_2 (x_1,x_2) = - W'(u_2(x_1,x_2))\,, \quad (x_1,x_2) \in \Gamma\,, \\
		\lim\limits_{x_2 \to \pm \infty} u_2(x_1,x_2) = \pm 1\,, \quad x_1 \in \bbR\,,
	\end{gathered}
\end{equation}
where
\begin{equation}\label{eq:OperatorSymbol:parallel}
		\cF(\cL u_2)(k) = \frac{\eta_1 \eta_2 |k|^3}{\eta_2 k_1^2 + \eta_1 k_2^2 }  \hat{u}_2(k)\,.
\end{equation}
If $\eta_i >0$ for $i=1$, $2$, this equation has the same form as the 1D reduced equation in the case of isotropic elasticity \cite{dong2021existence}.
Setting $\mu = \eta_1/2$, $p = 1$ and $q = \eta_1/\eta_2$ with $k_2$ in place of $k_3$ in \Cref{prop:AlmostIso:1DOperator:IntegralForm}, we obtain the following:

\begin{proposition}\label{prop:IntegralForm:Parallel}
	The integral form of $\cL$ defined in \eqref{eq:OneDimEqnForU2:ParallelCase} is
	\begin{equation*}
		\cL w(x) = - \frac{1}{4 \pi} \int_{\bbR^2} \big( w(x-y)-w(x+y) - 2 w(x) \big) K(y) \, \rmd y\,,
	\end{equation*}
	where 
	\begin{equation}\label{eq:Iso1:KernelDefn1}
		K(z) = \eta_1 \eta_2 \frac{A z_1^4 + Bz_1^2 z_2^2 + C z_2^4 }{|z| ( \eta_1 z_1^2 + \eta_2 z_2^2)^{3}}
	\end{equation}
	with
	\begin{equation}\label{eq:Iso1:KernelDefn2}
		\begin{gathered}
			A= 3\eta_1^2 - 2\eta_1 \eta_2, \quad B= 2 ( 3 \eta_1^2 - 5 \eta_1 \eta_2 + 3 \eta_2^2), \quad C= 3\eta_2^2 -2 \eta_1 \eta_2
		\end{gathered}
	\end{equation}
	is the unique solution in $C^{\infty}(\bbR^2 \setminus \{0\})$ to the PDE
	\begin{equation}\label{eq:Iso1:KernelEqn}
		(\eta_2  \p_1 +  \eta_1 \p_2) K(z) = \eta_1 \eta_2 ( \p_1^2 + \p_2^2) \left[ \frac{1}{|z|^3} \right]\,.
	\end{equation}
\end{proposition}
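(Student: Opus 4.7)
The plan is to read off this proposition as a direct specialization of \Cref{prop:AlmostIso:1DOperator:IntegralForm}. Indeed, the remark immediately preceding the statement indicates the precise substitution: $\mu=\eta_1/2$, $p=1$, $q=\eta_1/\eta_2$, and the renaming $k_3 \leftrightarrow k_2$ (equivalently $x_3 \leftrightarrow x_2$). Under this substitution, the 1D symbol \eqref{eq:OperatorSymbol:perp:u3Dep} equals $\frac{\eta_1 (k_1^2 + k_2^2)}{k_1^2 + (\eta_1/\eta_2)k_2^2}\cdot|k|$, which after multiplying numerator and denominator by $\eta_2$ is exactly \eqref{eq:OperatorSymbol:parallel}. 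Since both $\eta_1$ and $\eta_2$ are positive (by \eqref{eq:UniformEllipticityConditions} for $\eta_1$, and by direct check under \eqref{eq:para:CoeffCond2} for $\eta_2$), the hypotheses of \Cref{prop:AlmostIso:1DOperator:IntegralForm} are verified, and the integral representation for $\cL$ follows immediately with a kernel $K$ characterized by the PDE $L_d K = L_n [1/|z|^3]$ with $L_n := \eta_1\eta_2(\p_1^2+\p_2^2)$ and $L_d := \eta_2 \p_1^2 + \eta_1 \p_2^2$, i.e.\ \eqref{eq:Iso1:KernelEqn}.

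If I instead wanted to give the argument from scratch rather than cite, the strategy would mirror the proof of \Cref{prop:AlmostIso:1DOperator:IntegralForm} verbatim. First, factor the symbol as $|k| \cdot m_n(k)/m_d(k)$ with $m_n(k)=\eta_1\eta_2(k_1^2+k_2^2)$ and $m_d(k)=\eta_2 k_1^2+\eta_1 k_2^2$ both local. For a Schwartz $w$ with Fourier support away from the origin (which one recovers in general via a cutoff and dominated convergence), introduce $v$ with $L_d v = w$, so $\cL w = (-\Delta)^{1/2}L_n v$. Apply the representation from \Cref{thm:AnisoLaplaceRep} to $(-\Delta)^{1/2}$, then push $L_n$ inside the integral, add and subtract a Taylor polynomial of $v$ of sufficiently high order so that boundary terms in the P.V.\ sense vanish, and integrate by parts twice to transfer $L_n$ onto $1/|y|^3$ and subsequently replace $L_n[1/|y|^3]$ with $L_d K$. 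A final integration by parts shifts $L_d$ back onto $v$, collapsing $L_d v$ to $w$ and yielding the stated integral form.

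To verify the explicit formula \eqref{eq:Iso1:KernelDefn1}--\eqref{eq:Iso1:KernelDefn2}, I would use an ansatz of the form $K(z)=\eta_1\eta_2(A z_1^4 + B z_1^2 z_2^2 + C z_2^4)/(|z|(\eta_1 z_1^2+\eta_2 z_2^2)^3)$ that is homogeneous of degree $-3$, symmetric in $z\mapsto -z$, and compatible with the polynomial structure on the right-hand side of \eqref{eq:Iso1:KernelEqn}. Applying $L_d$ to this ansatz and $L_n$ to $1/|z|^3$ produces rational functions with a common denominator; clearing denominators and matching coefficients of the five independent monomials $z_1^{2i}z_2^{2(m-i)}$ on each side yields an overdetermined linear system in $(A,B,C)$ whose unique solution must be \eqref{eq:Iso1:KernelDefn2}. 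This is the only computational step, and it is routine.

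Uniqueness in $C^\infty(\bbR^2\setminus\{0\})$ follows by the same polar-coordinate reduction used in \Cref{lma:AlmostIso:KernelProperties}(iii): after the substitution $(z_1,z_2)=(r\cos\theta, r\sin\theta)$ and multiplication by $r^3$, equation \eqref{eq:Iso1:KernelEqn} becomes a second-order linear ODE in $\theta$ with smooth $\pi$-periodic coefficients, bounded right-hand side, and $\pi$-periodic boundary conditions, which admits a unique solution. The main obstacle, modest as it is, is the symbolic coefficient match in the ansatz step, since the algebraic expressions grow quickly; I would carry it out by hand or, if tedious, via computer algebra.
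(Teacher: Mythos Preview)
Your proposal is correct and is precisely the paper's own approach: the paper does not give a separate proof, but simply states that the proposition follows by setting $\mu=\eta_1/2$, $p=1$, $q=\eta_1/\eta_2$ (and relabeling $k_3\leftrightarrow k_2$) in \Cref{prop:AlmostIso:1DOperator:IntegralForm}, with the explicit kernel formula then coming from the corresponding specialization of \Cref{lma:AlmostIso:KernelProperties}. Your additional sketch of the from-scratch argument and the ansatz verification is accurate and more detailed than what the paper records.
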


\begin{proposition}[Homogeneity and positivity of kernel for case III]\label{lma:Iso1:KernelProperties}
	Define $\eta_1$ and $\eta_2$ as in \eqref{def:Kappa1}-\eqref{def:Kappa2}.
	Let the kernel $K : \bbR^2 \setminus \{0\} \to \bbR$ be defined as in \eqref{eq:Iso1:KernelDefn1}-\eqref{eq:Iso1:KernelDefn2}. 
	Then $K$ satisfies the following:
	\begin{enumerate}
		\item[i)] $K(x) = K(-x)$, $K(\rho x) = \rho^{-3} K(x)$ for $\rho > 0$.
		\item[ii)] $|x^{a} D^a K(x)| < \frac{C}{|x|^3}$ for any multi-index $a \in \bbN_0^2$ and any $x \neq 0$.
		\item[iii)] $K$ is the unique solution to the equation \eqref{eq:Iso1:KernelEqn} in $C^{\infty}(\bbR^2 \setminus \{0\})$ that is homogeneous of order $-3$.
		\item[iv)] Suppose that the five elastic constants are in $\cV^+$. 
		Then there exist positive constants $c$ and $C$ depending only on the five elastic constants such that $\frac{c}{|x|^3} \leq K(x) \leq \frac{C}{|x|^3}$.
	\end{enumerate}
\end{proposition}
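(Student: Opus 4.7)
The plan is to follow the scheme of \Cref{lma:AlmostIso:KernelProperties}, specialized and simplified to the explicit formula \eqref{eq:Iso1:KernelDefn1}--\eqref{eq:Iso1:KernelDefn2}. Properties (i)--(iii) will come directly from the formula for $K$, and the positivity claim in (iv) will be reduced to the isotropic positivity analysis already carried out in \cite{dong2021existence} and recorded in the remark immediately after \Cref{prop:temp:u1}.

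For (i), the symmetry $K(x) = K(-x)$ is immediate because the numerator is even of degree $4$ and the denominator is even of degree $7$ in $z$; homogeneity of degree $-3$ follows by inspection of the same degrees. Property (ii) is then automatic: each derivative $D^a K$ is smooth on $\bbR^2 \setminus \{0\}$ and homogeneous of degree $-3-|a|$, so $x^a D^a K$ is continuous and homogeneous of degree $-3$, hence bounded on the unit circle. For (iii), I would convert \eqref{eq:Iso1:KernelEqn} to polar coordinates and insert the ansatz $K(r,\theta) = r^{-3}\bar K(\theta)$; after multiplying through by $r^3$ the PDE reduces to a linear second-order ODE for $\bar K$ with smooth $\pi$-periodic coefficients and a bounded $\pi$-periodic right-hand side, for which there is a unique $\pi$-periodic solution. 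Combined with the fact that the $K$ in \eqref{eq:Iso1:KernelDefn1} does solve \eqref{eq:Iso1:KernelEqn} (which is the content of \Cref{prop:IntegralForm:Parallel}), this yields the asserted uniqueness among homogeneous-of-order-$-3$ solutions.

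For (iv), the key observation is that, upon setting $q := \eta_1/\eta_2$ and factoring $\eta_2^2$ from the numerator and $\eta_2^3$ from $(\eta_1 z_1^2 + \eta_2 z_2^2)^3$ in the denominator, \eqref{eq:Iso1:KernelDefn1}--\eqref{eq:Iso1:KernelDefn2} can be rewritten as
\[
K(z) \,=\, \eta_1 \cdot \frac{q(3q-2)\, z_1^4 \,+\, 2(3q^2 - 5q + 3)\, z_1^2 z_2^2 \,+\, (3-2q)\, z_2^4}{|z|\,(q z_1^2 + z_2^2)^3}\,,
\]
which, up to the positive prefactor $\eta_1$ and the swap $z_1 \leftrightarrow z_2$, coincides with the fully isotropic 1D kernel \eqref{eq:Kernel:FullIsotropic}--\eqref{eq:Coefficients:FullIsotropic} at the same value of $q$. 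The first-derivative-test analysis of $\theta \mapsto K(\cos\theta,\sin\theta)$ from the remark after \Cref{prop:temp:u1} then shows that such a kernel is positive on $\bbR^2 \setminus \{0\}$ if and only if $\frac{2}{3} < q < \frac{3}{2}$, which is exactly the defining condition of $\cV^+$ in \eqref{eq:para:CoeffCond2}. Positivity on the compact unit circle, combined with homogeneity of degree $-3$, then yields the two-sided bound $\frac{c}{|x|^3} \leq K(x) \leq \frac{C}{|x|^3}$ for constants depending only on the five elastic moduli.

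The only non-routine step is the first-derivative-test computation identifying $(2/3,\,3/2)$ as precisely the positivity range, but this is exactly the calculation already done for the fully isotropic model in \cite{dong2021existence}, and so it will be invoked rather than repeated. The remaining work is bookkeeping: verifying that the $\eta_2$-normalization matches the isotropic template, noting that the coordinate swap does not affect positivity since $\bar K$ is considered on the whole unit circle, and transferring the pointwise positive lower bound on $S^1$ to the global estimate by homogeneity.
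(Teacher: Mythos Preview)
Your treatment of (i)--(iii) matches the paper's; both derive symmetry, homogeneity, and the derivative bounds by inspection of the explicit formula, and both reduce uniqueness in (iii) to a second-order ODE in the angular variable.

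For (iv) your route differs from the paper's. The paper repeats the first-derivative test directly in the spirit of \Cref{lma:AlmostIso:KernelProperties}: with $\ell=\eta_1/\eta_2$ it finds the critical points of $\overline K(\theta)=K(\cos\theta,\sin\theta)$ on $[0,\pi/2]$, namely $0$, $\pi/2$, and (for certain $\ell$) an interior point $\arccos(\zeta)$; it then checks that $\overline K(\arccos(\zeta))>0$ automatically while $\overline K(0)>0$ and $\overline K(\pi/2)>0$ reduce to $2/3<\ell<3/2$. You instead normalize by $\eta_2$ to recognize the kernel as the fully isotropic kernel \eqref{eq:Kernel:FullIsotropic}--\eqref{eq:Coefficients:FullIsotropic} with parameter $q=\eta_1/\eta_2$ (up to the swap $z_1\leftrightarrow z_2$ and a positive prefactor), and then invoke the remark after \Cref{prop:temp:u1} that the isotropic kernel is positive exactly when $2/3<q<3/2$. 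Your algebraic identification is correct, and the coordinate swap is harmless for a positivity statement on the full circle. The trade-off is clear: your argument is shorter and makes the connection to the isotropic case explicit, while the paper's argument is self-contained and displays the explicit critical-point structure for this specific kernel.
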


\begin{proof}
	We proceed identically to \Cref{lma:AlmostIso:KernelProperties}. Set $\ell = \eta_1 / \eta_2$.
	The roots of $\overline{K}'(\theta) = \frac{d}{d\theta} K(\cos(\theta),\sin(\theta))$ for $\theta \in [0,\pi/2]$ are  
	are $0$, $\pi/2$, and $\arccos(\zeta)$, where
	\begin{equation*}
		\zeta := \sqrt{\frac{1 +2 \ell - 2 \sqrt{1+\ell^2}}{\ell-1}} \quad \text{ for } \ell \in (1/2,3/4) \cup (4/3,2)\,.
	\end{equation*}
	In the range $\frac{1}{2} < \ell < 2$, $\zeta \in [0,1]$ only for $\ell \in (1/2,3/4) \cup (4/3,2)$.
	Therefore, the equation $\overline{K}'(\theta) = 0$ has exactly three solutions in $[0,\pi/2]$ for $\ell \in (1/2,3/4) \cup (4/3,2)$, and exactly two solutions otherwise. 
	Therefore $K(x_1,x_2) >0 $ if and only if $\bar{K}(\theta)>0$ if and only if
	\begin{equation*}
		\overline{K}(0) >0\,, \quad \overline{K}(\pi/2) > 0\,, \quad \text{ and } \quad \overline{K}(\arccos(\zeta)) > 0 \text{ when } \ell \in (1/2,3/4) \cup (4/3,2)\,.
	\end{equation*}
	Substituting directly gives the relations
	\begin{equation}\label{eq:Iso1:Pf1}
		\begin{split}
			\frac{2 (3 \ell- 2)}{\ell^2} &>0\,, \\
			2(3-2\ell) &>0\,, \\
			\frac{\ell^3+\ell^2 \sqrt{\ell^2+1} +\sqrt{\ell^2+1}+1}{2 \ell^2} &> 0\text{ when } \frac{4}{3} < \ell < 2 \text{ or } \frac{1}{2} < \ell < \frac{3}{4}\,.
		\end{split}
	\end{equation}
	Clearly the third relation in \eqref{eq:Iso1:Pf1} is always satisfied, and the other two relations give the condition $2/3 < \ell < 3/2$.
\end{proof}

Note that taking the elastic constants as in \eqref{eq:ConsistentWithIsotropy} gives $\frac{\eta_1}{\eta_2} = 1- \nu$.

\section{Bounded stable solutions have 1D profiles}\label{sec:1DProfile}

	We now prove well-posedness and properties of solutions to the reduced scalar equations. We simultaneously consider the following three settings:
	\begin{enumerate}
		\item[I)] $W(u_1,u_3) = W(u_1)$, $u = u_1$ satisfies \eqref{eq:nonlocalMain1d}, the kernel $K$ is defined in \Cref{prop:KernelForm:perp:u1Dep}, and the elastic constants satisfy the assumptions of \Cref{prop:Perp:u1Dependence}.
		
		\item[II)] $W(u_1,u_3) = W(u_3)$, $u = u_3$ satisfies \eqref{eq:nonlocalMain1d:u3Eqn}, the kernel $K$ is defined in \Cref{lma:AlmostIso:KernelProperties}, and the elastic constants satisfy the assumptions of \Cref{prop:Perp:u3Dependence}.
		
		\item[III)] $W(u_1,u_2) = W(u_2)$, $u = u_2$ satisfies \eqref{eq:Nonlocal1DEqn:Parallel}, the kernel $K$ is defined in \Cref{prop:IntegralForm:Parallel}, and the elastic constants satisfy the assumptions of \Cref{prop:parallel}.
	\end{enumerate}
	Thanks to the work in the previous sections, each of these settings can be cast in the mold of the following problem: find $u$ satisfying 
	\begin{equation}\label{eq:Nonlocal1DEqn:GeneralCase}
		\begin{gathered}
			\cL u (x_1,x_2) = - W'(u(x_1,x_2))\,, \quad (x_1,x_2) \in \bbR^2\,, \\
			\lim\limits_{x_1 \to \pm \infty} u(x_1,x_2) = \pm 1\,, \quad x_2 \in \bbR\,,
		\end{gathered}
	\end{equation}
	where $x = (x_1,x_2)$,
	\begin{equation}\label{eq:Nonlocal1DOperator:GeneralCase}
		\cL w(x) = - \frac{1}{4 \pi} \int_{\bbR^2} \big( w(x-y)-w(x+y) - 2 w(x) \big) K(y) \, \rmd y\,,
	\end{equation}
	and where the nonlocal kernel $K \in C^{\infty}(\bbR^2 \setminus \{0\})$ satisfies
	\begin{enumerate}[(i)]
		\item $K(x) = K(-x)$, $K(\rho x) = \rho^{-3} K(x)$ for any $\rho > 0$,
		\item $|x^{a} D^a K(x)| < \frac{C}{|x|^3}$ for any multi-index $a \in \bbN_0^2$ and any $x \neq 0$,
		\item there exist positive constants $\ell$ and $L$ such that $\frac{\ell}{|x|^3} \leq K(x) \leq \frac{L}{|x|^3}$ for all $x \in \bbR^2 \setminus \{0\}$.
	\end{enumerate}
	
	In this section, we show that any bounded stable solution in each setting I), II) and III) has a 1D profile, i.e., $u(x)=\psi(e\cdot x)$ for some $e\in \bbS^1$, where $\psi$ is the unique (up to translations) solution to a 1D scalar problem.

	Let us first clarify the definition of stable solutions.
	\begin{definition}\label{def_stable}
		We say that $u$ is a stable solution to \eqref{eq:Nonlocal1DEqn:GeneralCase} if  
		\begin{equation*}
			\int_{\bbR^2} \left({\cL} v +  W''(u)v \right) v  \, \rmd x \geq 0 \quad \text{ for any } v\in C_c^2(\bbR^2).
		\end{equation*}
	\end{definition}
	
	Define the total energy of $u$ in any Euclidean ball $B_{R}\subset \bbR^2$ as
	\begin{equation}\label{B_energy}
		\begin{aligned}
			E_\Gamma^0(u; B_R)
			&:=\frac{C_d}{4}\iint_{\bbR^2\times \bbR^2\backslash B_R^c\times B_R^c} |u(x)-u(y)|^2 {K}(x-y) \, \rmd x \, \rmd y +  \int_{B_R} W(u(x)) \, \rmd x\\
			&:=\frac{C_d}{4}\cE(u;B_R)+ F(u;B_R),
		\end{aligned}
	\end{equation}
	where ${K}$ satisfies properties (i)-(ii)-(iii) above.
		In this general setting we can obtain the interior BV estimate for stable solutions. 
	\begin{lemma}\label{BV}
		Let $|u|\leq M$ be a bounded stable solution to \eqref{eq:Nonlocal1DEqn:GeneralCase} in the sense of  \Cref{def_stable}.
		Assume {that} $W$ satisfies \eqref{potential} and  $L_*:=\max\{2, \|W\|_{C^{2,a}_b(\bbR)}\}$. Then there exists a constant $C = C(\ell,L, M, L_*)$ such that for any $B_R\subset \bbR^2$ and $R\geq 1$,
		\begin{equation}\label{uniform-e}
			\int_{B_R} |\nabla u| \, \rmd x \leq C(\ell, L, M, L_*) R \log (L_*R ), \qquad \cE(u, B_R)\leq C(\ell, L, M, L_*) R \log^2(L_*R).
		\end{equation}
	\end{lemma}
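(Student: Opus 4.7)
The plan is to adapt the stable-solution BV estimate developed by Cabr\'e, Cinti and Serra~\cite{CSV19} and used in the isotropic PN setting in~\cite{dong2021existence}. The whole argument rests on the three properties (i)--(iii) of the kernel $K$, which together make $\cL$ qualitatively equivalent to $(-\Delta)^{1/2}$ and allow the same comparison and interpolation techniques to go through in our anisotropic setting. The proof naturally splits into two stages: first derive the nonlocal Dirichlet energy bound $\cE(u;B_R)\le C\, R\log^2(L_*R)$, and then upgrade this to the BV bound by a sharp interpolation inequality.

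For the energy estimate, I would proceed via a sliding/rearrangement comparison. For a fixed unit direction $e\in\bbS^1$ and a small translation parameter $\tau$, set $v_\tau^\pm:=\max\{u,u(\cdot\pm\tau e)\}$ and $w_\tau^\pm:=\min\{u,u(\cdot\pm\tau e)\}$. The symmetry $K(x)=K(-x)$ combined with the positivity bound (iii) yields the rearrangement identity
\begin{equation*}
\cE(v_\tau^\pm;B_R)+\cE(w_\tau^\pm;B_R)\le \cE(u;B_R)+\cE(u(\cdot\pm\tau e);B_R),
\end{equation*}
and together with the stability of $u$ (\Cref{def_stable}) and the localization of $W$, this forces $u$ to be compared with a monotone family of one-parameter competitors on $B_R$. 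A standard foliation argument then reduces the 2D energy to a 1D profile energy, and explicit integration using the double-well structure of $W$ from \eqref{potential} together with the $-3$-homogeneity in (i) delivers $\cE(u;B_R)\le C\, R\log^2(L_*R)$. The two logarithms arise from truncating the $|x|^{-3}$-tail of $K$ at scale $L_*R$ in dimension two, once for each factor of $|u(x)-u(y)|$ in the Dirichlet form; the precise dependence on $L_*$ enters through the $C^{2,a}_b$ norm of $W$ controlling the gluing of inner and outer scales.

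To pass from the energy estimate to the BV estimate I would invoke the sharp nonlocal interpolation inequality of~\cite{Gui19,FS20,CSV19,savin2018rigidity}, which thanks to (i)--(iii) takes essentially the form
\begin{equation*}
\int_{B_R}|\nabla u|\,\rmd x\;\le\; C\sqrt{\cE(u;B_{2R})\cdot R}\;+\;(\text{lower order}),
\end{equation*}
so that substituting the energy bound yields the claimed $R\log(L_*R)$ growth. The main obstacle is verifying that the rearrangement identity and foliation argument of stage one go through for the anisotropic, non-radial $K$. The pointwise lower bound in (iii) is essential to ensure that $\min/\max$ competitors strictly decrease the Dirichlet form when $u$ is not ordered, while the upper bound in (iii) together with the homogeneity (i) is what allows quantitative comparison with the fractional Laplacian case and delivers the explicit rate $R\log^2(L_*R)$. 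These three properties were established in \Cref{lma:Aniso:Kernel1Properties}--\Cref{lma:Aniso:Kernel2Properties} together with \Cref{prop:temp:u1}, \Cref{lma:AlmostIso:KernelProperties}, and \Cref{lma:Iso1:KernelProperties} under the respective constraints on the elastic coefficients, so the argument will apply uniformly in all three cases I, II, III.
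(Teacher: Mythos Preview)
Your two-stage outline has the logical order reversed relative to the paper's argument (which follows \cite[Proposition~4.5]{dong2021existence}), and your stage one contains a genuine gap. In the paper, stability is used \emph{first} to obtain the interior BV product estimate \eqref{BVii},
\[
\Bigl(\int_{B_{1/2}}(\partial_e u)_+\Bigr)\Bigl(\int_{B_{1/2}}(\partial_e u)_-\Bigr)\le C\,\frac{\cE(u;B_R)}{R^2},\qquad
\int_{B_{1/2}}|\nabla u|\le C\bigl(1+\sqrt{\cE(u;B_1)}\bigr),
\]
by testing the stability inequality with suitable cutoffs of $(\partial_e u)_{\pm}$ and using the linearized equation $\cL(\partial_e u)+W''(u)\,\partial_e u=0$. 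Only \emph{after} this does a sharp interpolation/iteration for $\cE(u;B_R)$ yield simultaneously the energy bound $\cE(u;B_R)\le C R\log^2(L_*R)$ and the BV bound $\int_{B_R}|\nabla u|\le C R\log(L_*R)$. Your interpolation inequality in stage two is essentially the rescaled second line of \eqref{BVii}, so that step is fine; the trouble is upstream.

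Your proposed derivation of $\cE(u;B_R)\le C R\log^2(L_*R)$ via $\min/\max$ rearrangement and a ``foliation argument reducing the 2D energy to a 1D profile energy'' does not work for \emph{stable} solutions. The rearrangement inequality you wrote is correct, but it is a tool for \emph{minimizers}: it lets you compare the energy of $u$ with that of an explicit competitor, which is meaningful only if $u$ minimizes in $B_R$. Stability gives a second-variation inequality, not a first-variation comparison, so rearranged competitors do not directly bound $\cE(u;B_R)$. More seriously, the foliation step that ``reduces to a 1D profile energy'' is circular: the 1D-profile structure is precisely what the BV estimate is being used to \emph{prove} in \Cref{thm6.3}; you cannot assume it here. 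To fix the argument, drop the competitor/foliation route and instead extract \eqref{BVii} directly from stability as above; the kernel properties (i)--(iii) are exactly what makes this step go through for anisotropic $K$, and the dyadic bootstrap then delivers both estimates in \eqref{uniform-e}.
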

	We omit the proof of this lemma because it follows the same procedures as \cite[Proposition 4.5]{dong2021existence}. The main idea is to first obtain the key interior BV estimate for any direction $e \in \bbS^1$
	\begin{equation}\label{BVii}
		\begin{gathered}
		\left( \int_{B_{\frac12}}(\partial_{e} u(x))_+  \, \rmd x  \right)  \left( \int_{B_{\frac12}}(\partial_{e} u(y))_-  \, \rmd y \right)
		\leq C(\delta, \nu) \frac{\cE(u, B_R)}{R^2}, \,\, \\
		\int_{B_{\frac12}} |\nabla u(x)|  \, \rmd x 
		\leq C(\delta, \nu) (1+\sqrt{\cE(u,B_1)}), 
		\end{gathered}
	\end{equation}
	which relies on properties (i)-(ii)-(iii) of the kernel $K$, and then to use a sharp interpolation inequality for energy $\cE(u, B_R)$.
	Combining the energy estimate \eqref{uniform-e} with the interior BV estimate \eqref{BVii}, we obtain that as $R\to +\infty$, for any direction $e\in \bbS^1$ and any half ball in $\bbR^2$ we have
	\begin{equation*}
		\partial_e u \geq 0  \text{ in }\bbR^2 \quad \text{ or } \quad \partial_{e} u \leq 0 \text{ in } \bbR^2 \qquad \text{ for any }e\in \bbS^1,
	\end{equation*}
	which yields the conclusion {that} $u$ {has} a 1D monotone profile. 
	
	Thus for each setting I), II), and III), we conclude that
	\begin{theorem}\label{thm6.3}
		Assume that $|u|\leq M$ is a bounded stable solution to \eqref{eq:Nonlocal1DEqn:GeneralCase} and $W$ satisfies \eqref{potential}. Suppose the kernel $K$ satisfies conditions (i)-(ii)-(iii) above.
		Then $u$ has a 1D monotone profile and $|u| \leq 1$.
		Moreover, the solution to \eqref{eq:Nonlocal1DEqn:GeneralCase} can be characterized as $u(x) = \psi(e\cdot x)$ for any $e := (\cos\theta,\sin\theta) \in \bbS^{1}$ with fixed $\theta \in (-\frac{\pi}{2},\frac{\pi}{2})$, where $\psi$ is the unique solution (up to translations) to the 1D scalar problem
		\begin{equation*}
			\begin{split}
				(-\Delta)^{\frac{1}{2}} \psi(x_1) &= \frac{-1}{m(\cos\theta,\sin\theta)} W'(\psi(x_1))\,, \quad x_1 \in \bbR\,, \\
				\lim\limits_{x_1 \to \pm \infty} \psi(x_1) &= \pm 1\,, 
			\end{split}
		\end{equation*}
		and where $m$ is the symbol associated to $\cL$ that is defined in \eqref{eq:1DFourierSymbolCase1}, \eqref{eq:OperatorSymbol:perp:u3Dep}, or \eqref{eq:OperatorSymbol:parallel}.
	\end{theorem}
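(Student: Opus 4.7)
The plan is to cast each of the three settings I)--III) as an instance of the abstract problem \eqref{eq:Nonlocal1DEqn:GeneralCase}--\eqref{eq:Nonlocal1DOperator:GeneralCase} whose kernel $K$ satisfies properties (i)--(iii), and then to run the rigidity machinery from \cite[Theorem 4.6]{dong2021existence} (sketched in the paragraph surrounding \Cref{BV}) that takes a bounded stable solution all the way to a 1D profile. The final piece of the argument, which is the main new content, is to identify precisely which 1D equation $\psi$ must satisfy; this will come from the Fourier symbol of $\cL$ together with its homogeneity.

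First, I apply \Cref{BV} to obtain, for each $R \geq 1$, both the BV bound on $\nabla u$ in $B_R$ and the Dirichlet-type energy estimate $\cE(u,B_R) \leq C R \log^2(L_* R)$. Next I invoke the sliding/interpolation inequality \eqref{BVii} with balls $B_{1/2}(x_0)$ translated to arbitrary $x_0 \in \bbR^2$; this is legitimate because the kernel and the equation are translation invariant, and its derivation uses only properties (i)--(iii) of $K$. Sending $R \to \infty$, the right-hand side of \eqref{BVii} decays like $\log^2(R)/R \to 0$, so for every $e \in \bbS^1$ and every $x_0$, either $(\p_e u)_+ \equiv 0$ or $(\p_e u)_- \equiv 0$ on $B_{1/2}(x_0)$. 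A connectedness/continuity argument in $x_0$ then upgrades this to: for every $e \in \bbS^1$, either $\p_e u \geq 0$ on all of $\bbR^2$ or $\p_e u \leq 0$ on all of $\bbR^2$.

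Monotonicity of $u$ along every direction forces $\nabla u(x)$ to be parallel to a single fixed vector $e_* \in \bbS^1$ at every point, so there exists a profile $\psi : \bbR \to \bbR$ with $u(x) = \psi(e_* \cdot x)$. The direction of monotonicity and the far-field condition in \eqref{eq:Nonlocal1DEqn:GeneralCase} together determine $e_*$ up to the symmetries listed in the statement, force $\psi$ to be monotone with $\psi(\pm \infty) = \pm 1$, and therefore give $|u| \leq 1$. To identify the 1D equation, I use that the Fourier symbol $m(k)$ of $\cL$ is homogeneous of degree $1$: for $u(x) = \psi(e_* \cdot x)$, a direct symbol calculation (made rigorous by testing against Schwartz functions in the direction $e_*$ and noting that $|k| = |k_1|$ when $k = k_1 e_*$) yields
\begin{equation*}
\cL [\psi(e_* \cdot x)] (x) \;=\; m(e_*)\, (-\Delta)^{1/2} \psi(e_* \cdot x)\,.
\end{equation*}
Substituting into \eqref{eq:Nonlocal1DEqn:GeneralCase} gives the stated scalar 1D problem, and uniqueness of $\psi$ up to translations is classical for the 1D fractional Allen--Cahn equation with a double-well / periodic potential satisfying \eqref{potential} (see the references in the introduction).

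The genuinely hard step, the BV estimate, has already been absorbed into \Cref{BV}; the critical prerequisite for that lemma is exactly the trio of kernel properties (i)--(iii), which is why the bulk of Sections \ref{sec:Perp:u1Eqn}--\ref{sec:parallel} was devoted to pinning down the explicit ranges of elastic coefficients that guarantee positivity and homogeneity of $K$. What remains in this section is therefore essentially bookkeeping: checking the sliding/continuity step that promotes local monotonicity to global monotonicity in every direction, and carrying out the Fourier symbol identification to obtain the explicit 1D equation. The only place where one must be slightly careful is the Fourier computation, since $\psi(e_* \cdot x)$ is not in $L^2(\bbR^2)$; this is handled as in \cite{dong2021existence} by working with tempered distributions or, equivalently, by testing the integral form \eqref{eq:Nonlocal1DOperator:GeneralCase} against a mollified 1D ansatz and passing to the limit.
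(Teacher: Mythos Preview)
Your proposal is correct and follows exactly the approach the paper intends: the paper's own proof consists of the single sentence ``The proof is identical to that of \cite[Theorem~4.6]{dong2021existence},'' and your sketch simply unpacks that citation --- applying \Cref{BV}, letting $R\to\infty$ in \eqref{BVii} to force global monotonicity in every direction, extracting the 1D profile, and then using the degree-one homogeneity of the symbol $m$ to identify the reduced equation for $\psi$. There is nothing to add or correct.
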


	\begin{proof}
		The proof is identical to that of \cite[Theorem 4.6]{dong2021existence}.
	\end{proof}

\begin{proof}[proofs of \Cref{prop:Perp:u1Dependence}, \Cref{prop:Perp:u3Dependence}, and \Cref{prop:parallel}]
		 From Theorem \ref{thm6.3}, the solution to \eqref{eq:Nonlocal1DEqn:GeneralCase} is unique and has a 1D profile. That means  the solutions to \eqref{eq:nonlocalMain1d}, \eqref{eq:nonlocalMain1d:u3Eqn} and \eqref{eq:Nonlocal1DEqn:Parallel} is unique and has a 1D profile respectively. 
	From obtained one component of the 3D solution, we can further solve for the other two components in the 3D system by using the elastic extension based on the relevant Dirichlet-to-Neumann map in Section 2 (see \cite{gao2021revisit} for the details in the fully isotropic case). Finally, the unique stable solution to the full 3D system corresponding to each of the three settings is obtained.
\end{proof}

\begin{remark}
	For more general anisotropic materials, or for elastic coefficients not satisfying \eqref{eq:Perp:Coeff:Cond2}, the expression of the matrix $\bbA$ associated to the Dirichlet-to-Neumann map is more complicated, making the program followed in this work less tractable. The results in this paper remain open in those more general settings.
\end{remark}

\bibliography{References}
\bibliographystyle{plain}

\end{document}